\DeclareMathOperator{\vol}{vol}
\DeclareMathOperator{\Ker}{Ker}
\DeclareMathOperator{\Hom}{Hom}
\DeclareMathOperator{\Homo}{H}
\DeclareMathOperator{\Res}{Res}
\DeclareMathOperator{\PV}{PV}
\DeclareMathOperator{\rest}{rest}
\DeclareMathOperator{\reg}{reg}
\DeclareMathOperator{\Cone}{Cone}
\DeclareMathOperator{\Exp}{Exp}
\DeclareMathOperator{\dvol}{dvol}
\newcommand{\barsigma}{\overline{\sigma}}
\newcommand{\PP}{\mathbb{P}}
\newcommand{\R}{\mathbb{R}}
\newcommand{\C}{\mathbb{C}}
\newcommand{\DD}{\mathcal{D}}
\newcommand{\Q}{\mathbb{Q}}
\newcommand{\Z}{\mathbb{Z}}
\newcommand{\T}{\mathbb{T}}
\newcommand{\EE}{\mathcal{E}}
\newcommand{\ii}{\sqrt{-1}}
\newcommand{\s}{\sigma}
\newcommand{\bs}{\barsigma}
\newcommand{\ve}{\varepsilon}
\newcommand{\dbar}{\bar{\partial}}
\def\mychange#1{{#1}}
\theoremstyle{plain}
\newtheorem{thm}{Theorem}[section]
\newtheorem{lem}[thm]{Lemma}
\newtheorem{prop}[thm]{Proposition}
\newtheorem{cor}[thm]{Corollary}
\newtheorem{example}[thm]{Example}
\newtheorem{rem}[thm]{Remark}
\def\change#1{{ #1}}
\title{Localization formulas of cohomology intersection numbers}
\author{Saiei-Jaeyeong Matsubara-Heo}
\begin{document}
\date{}
\maketitle

\begin{abstract}
We revisit the localization formulas of cohomology intersection numbers associated to a logarithmic connection. The main contribution of this paper is threefold: we prove the localization formula of the cohomology intersection number of logarithmic forms in terms of residue of a connection; we prove that the leading term of the Laurent expansion of the cohomology intersection number is Grothendieck residue when the connection is hypergeometric; and we prove that the leading term of stringy integral discussed by Arkani-Hamed, He and Lam is nothing but the self-cohomology intersection number of the canonical form.
\end{abstract}

\section{Introduction}
Hypergeometric integrals appear broadly in applied sciences in disguise: it appears as a normalizing constant of a discrete or continuous probability distribution in statistics (\cite{GKTT}, \cite{HGM}); it plays an important role in quantum field theory as a dimensionally regularized Feynman integral (\cite{IBP},\cite{MM}).
Numerical evaluation of hypergeometric integral is a fundamental problem in these areas.
Recently, it comes to light that exact evaluation of {\it cohomology intersection number} can contribute to such a problem (\cite{GKTT},\cite{MM}).
The key property of the cohomology intersection number is that one can compute it only from local information although the definition is a priori global.
In this paper, we revisit two types of localization formulas of cohomology intersection number: {\it stationary phase formula} (\cite{M},\cite{MP}) and {\it residue formula} (\cite{CM},\cite{Matsu}).
As a byproduct, we discuss {\it stringy integral} (\cite{AH}).
Namely, we prove that the leading term of stringy integral is nothing but the self-cohomology intersection number of the {\it canonical form} (\cite{AHP}).

A (regular holonomic) hypergeometric integral takes the following form:
\begin{equation}\label{eqn:1.1}
I_{\omega}(z;\alpha):=\int_\Gamma f_1^{\alpha_1}\cdots f_m^{\alpha_m}\omega.
\end{equation}
Here, $f_1,\dots,f_m$ are nowhere vanishing regular functions on a smooth complex affine variety $U=U_z$ smoothly fibered over a base space of $z$-variables, $\alpha_i$ is a complex parameter, $\omega$ is a differential form on $U$ of degree $n=\dim_{\C} U$ and $\Gamma$ is a suitable integration cycle.
It is natural to regard $\omega$ as a representative of a cohomology class of an algebraic de Rham cohomology group $\Homo^n(\alpha):=\mathbb{H}^n(U;(\Omega_{U}^\bullet,\nabla))$ with $\nabla:=d_x+\sum_{i=1}^m\alpha_id_x\log f_i\wedge$.
Deducing a Pfaffian system that the hypergeometric integral (\ref{eqn:1.1}) is subject to is equivalent to writing down the connection matrix of the Gau\ss-Manin connection with respect to a frame $\{ [\omega_i]\}_i$ of $\Homo^n(\alpha)$.
It is a fast derivation of a Pfaffian system that a numerical evaluation of hypergeometric integral is based on.
Let us set $I(z;\alpha)={}^t(I_{\omega_1}(z;\alpha),\dots)$ and let $d_zI=AI$ be the associated Pfaffian system.
The {\it holonomic gradient method} computes $I(z;\alpha)$ at a large point $z_{large}$ from a value at a small point $z_{small}$ employing a suitable discritization of $d_zI=AI$(\cite{HGM}).
A difference analogue of this method is called {\it difference holonomic gradient method} in \cite{GKTT}.
Namely, we consider an integral difference operator $\s:\alpha\mapsto \alpha+{\bf k}$ and a difference version of Pfaffian system $\s I=BI$.
Then the value $I(z;\alpha+N{\bf k})$ for a large integer $N$ can be obtained from $I(z;\alpha)$ by an iterative multiplication by $B$.
Although the action of Gau\ss-Manin derivative or that of the difference operator $\s$ on the cohomology group $\Homo^n(\alpha)$ is easy to describe, the computation of the connection matrices $A,B$ requires some efforts.
%Indeed, various algorithms of deriving the connection matrices $A,B$ are extensively studied under the name of {\it IBP identities}, an alias for cohomologous relations (\cite{CT}).
%As it is important to compute numerous hypergeometric integrals in collider physics, it is an active area of research to investigate algorithms of deriving the connection matrices $A,B$ (see e.g., references in \cite{IBP}).

A useful tool of deriving connection matrices is the cohomology intersection form.
Under a suitable assumption, we have a canonical isomorphism $\mathbb{H}_c^*(U^{an};(\Omega^\bullet_{U^{an}},\nabla^{an}))\tilde{\rightarrow}\mathbb{H}^*(U^{an};(\Omega^\bullet_{U^{an}},\nabla^{an}))$.
Combining this canonical isomorphism with Poincar\'e duality and the comparison isomorphism of Deligne-Grothendieck (\cite[Corollaire 6.3]{Del}), we have a perfect bilinear pairing $\langle\bullet,\bullet\rangle_{ch};\Homo^n(-\alpha)\otimes_\C\Homo^n(\alpha)\rightarrow\C$ called the cohomology intersection form.
Note that any element $\omega_\pm\in\Homo^0(U,\Omega_U^n)$ naturally defines a cohomology class $[\omega_\pm]\in\Homo^n(\pm\alpha)$.
Its value $\langle[\omega_-],[\omega_+]\rangle_{ch}$ for some $[\omega_\pm]\in\Homo^n(\pm\alpha)$ is called the cohomology intersection number.
For any linear endomorphism $T$ of $\Homo^n(\alpha)$, its representation matrix with respect to a given basis can be calculated in terms of $\langle\bullet,\bullet\rangle_{ch}$.
%One can find the importance of this method in [??] or [??].
In this paper, we revisit two types of formulas of the cohomology intersection number $\langle[\omega_-],[\omega_+]\rangle_{ch}$ in terms of the representatives $\omega_\pm$: {stationary phase formula} (\cite{M}, \cite{MP}) and {residue formula} (\cite{CM},\cite{Matsu}).
Although these formulas are widely used in practice, we aim to provide precise statements of them as well as their proofs.

Let us first recall the stationary phase formula.
To formulate it, we need to introduce a large parameter $\tau$ and consider a scaling of the cohomology group $\Homo^n(\tau\alpha)$.
An important assumption is that the critical points of the phase function (or {\it log-likelihood function}) $F_\alpha=\sum_{i=1}^m\alpha_i\log f_i$ are all isolated and $F_\alpha$ is smooth at infinity.
This is a weak form of Varchenko's conjecture (\cite{Var}) and we will provide a discussion when this is valid in \S\ref{sec:2.2}.
At this stage, we remark that the assumption is true when $U$ is a smooth closed subvariety of an algebraic torus, $f_i$ are coordinate projections restricted to $U$ and $\alpha$ is generic (\cite{Huh}).
Under this assumption, the algebraic de Rham complex is related to the Koszul complex $(\Omega_U^\bullet,dF\wedge)$ associated to $dF$ at the large scale limit $|\tau|\rightarrow\infty$.
As the global cohomology of the Koszul complex has a natural duality pairing in terms of Grothendieck residue, it is natural to expect that it is the leading term of the cohomology intersection number.
The stationary phase formula takes the following form of which the precise formulation is provided in \S\ref{sec:2.2}.

\begin{thm}[Theorem \ref{thm:2}, stationary phase formula]\label{thm:Intro1}
Assume the conditions $(*)$ and ${\rm (generic)}$ of \S\ref{sec:2.2}. For any $\omega_\pm\in\Homo^0(U;\Omega^n_U)$, one has a Laurent expansion
\begin{equation}\label{eqn:AEIntro}
\frac{\langle[\omega_-],[\omega_+]\rangle_{ch}}{(2\pi\ii)^n}=\tau^{-n}\sum_{k\geq 0}K^{(k)}(\omega_-,\omega_+)\tau^{-k}\ \ (|\tau|\rightarrow+\infty)
\end{equation}
of which the leading term $K^{(0)}$ is the Grothendieck residue.
\end{thm}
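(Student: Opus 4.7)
The plan is to realize $\langle[\omega_-],[\omega_+]\rangle_{ch}$ as an oscillatory integral whose large-$|\tau|$ asymptotics can be analysed by the complex stationary phase method, and to identify the leading Gaussian contribution at each critical point of $F_\alpha=\sum_i\alpha_i\log f_i$ with a Grothendieck residue symbol, thereby matching the global Koszul duality pairing on $(\Omega^\bullet_U,dF_\alpha\wedge)$.

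First I would produce a concrete analytic representation of the pairing. Using the canonical isomorphism $\mathbb{H}^n_c(U^{an};(\Omega^\bullet,\nabla^{an}_\tau))\tilde{\rightarrow}\mathbb{H}^n(U^{an};(\Omega^\bullet,\nabla^{an}_\tau))$ together with the Deligne--Grothendieck comparison invoked in the introduction, one can write
\[
\langle [\omega_-],[\omega_+]\rangle_{ch}=\int_{U^{an}} \omega_-\wedge \reg_\tau(\omega_+),
\]
where $\reg_\tau(\omega_+)$ is a compactly supported smooth $n$-form representing $[\omega_+]\in \Homo^n(\tau\alpha)$. Such a regularization can be built by patching local $\nabla_\tau$-primitives of $\omega_+$ near the divisor at infinity against a fixed partition of unity; when combined with the single-valued factor $|f_1^{\alpha_1}\cdots f_m^{\alpha_m}|^{2\tau}$ that appears through the duality identification, the $\tau$-dependence of $\reg_\tau(\omega_+)$ becomes explicit and exponential.

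Second I would rewrite the resulting integral as a complex oscillatory integral of the form $\int_U e^{\tau\Phi(x,\bar x)}\,a(x,\bar x;\tau)\,d\mu$, where the critical set of $\Phi$ coincides with that of $F_\alpha$. Assumption $(*)$ of \S\ref{sec:2.2} forces these critical points to be isolated and non-degenerate, and assumption (generic) ensures smoothness at infinity, so that no boundary stratum contributes to the large-$|\tau|$ asymptotics. The standard complex stationary phase theorem then yields a complete asymptotic expansion; Gaussian integration in $n$ complex variables produces the leading power $\tau^{-n}$ and, iteratively, the full Laurent series $\tau^{-n}\sum_{k\geq 0}K^{(k)}\tau^{-k}$. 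Writing $\omega_\pm = g_\pm\, dx_1\wedge\cdots\wedge dx_n$ in local coordinates around a critical point $p$ of $F_\alpha$, the Gaussian contribution at $p$ evaluates to
\[
K^{(0)}_p(\omega_-,\omega_+)=\frac{g_-(p)\,g_+(p)}{\det(\partial_i\partial_j F_\alpha)(p)},
\]
which is precisely the $p$-local Grothendieck residue symbol of $g_-g_+\,dx_1\wedge\cdots\wedge dx_n$ with respect to the regular sequence $(\partial_1 F_\alpha,\dots,\partial_n F_\alpha)$; summing over the finite critical set recovers the Koszul residue pairing, matching the $(2\pi\ii)^n$ normalization already divided out in (\ref{eqn:AEIntro}).

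The main obstacle will be the interplay between the first two steps: the regularization $\reg_\tau(\omega_+)$ must be chosen so that its $\tau$-dependence is explicit enough to extract a genuine Laurent expansion in $\tau^{-1}$, yet tame enough near the boundary of $U$ that the stationary phase estimate holds uniformly to all orders and no polynomial-in-$\tau$ contribution arises from infinity. Both assumption $(*)$ (which concentrates the Koszul cohomology in degree $n$ and explains the leading power $\tau^{-n}$) and assumption (generic) (which suppresses phantom boundary contributions) are essential for this control; I expect the bulk of the technical work to lie in constructing $\reg_\tau$ compatibly with both.
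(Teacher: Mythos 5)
Your high-level goal of treating $\langle[\omega_-],[\omega_+]\rangle_{ch}$ as a large-$\tau$ oscillatory quantity and extracting the leading order by stationary phase is the right instinct, but the concrete integral representation you plan to use does not exist, and if it did it would produce the wrong leading term.

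The essential difficulty is that $\langle[\omega_-],[\omega_+]\rangle_{ch}$ is a purely de Rham pairing, $\int_X\omega_-\wedge\omega_+^{\mathrm{reg}}$, where $\omega_+^{\mathrm{reg}}$ is obtained from $\omega_+$ by subtracting a $\nabla_+$-exact correction supported near $D$. No single-valued factor $|f_1^{\alpha_1}\cdots f_m^{\alpha_m}|^{2\tau}$ appears in it: the duality pairing here is the bracket $E_z\otimes E_z^\vee\rightarrow\C$ on a trivial line bundle, i.e.\ multiplication, and any multi-valued branches present in a period integral cancel in the self-pairing. So you cannot convert $\int_U\omega_-\wedge\reg_\tau(\omega_+)$ into $\int_Ue^{\tau\Phi(x,\bar x)}a\,d\mu$ with $\Phi=2\Re F_\alpha$. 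Moreover, suppose you could. Around a nondegenerate critical point $p$ the real Hessian of $2\Re F_\alpha$ has determinant $|\det H_{F_\alpha,x}(p)|^2$, so the Gaussian contribution of a single $2n$-dimensional real stationary phase integral is proportional to $1/|\det H_{F_\alpha,x}(p)|$. The Grothendieck residue you are targeting is $\sum_p\frac{(\omega_+/dx)(p)(\omega_-/dx)(p)}{\det H_{F_\alpha,x}(p)}$, holomorphic in the Hessian and without absolute values. A single oscillatory integral over $U$ can never recover this holomorphic dependence; you would instead reproduce the genuinely different $L^2$-type object appearing in Theorem~\ref{thm:Absolute}.

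The device the paper uses to achieve the correct factorization is the twisted period relation~(\ref{eqn:TPR1}): after reducing to $\alpha\in\Z^m$ by a family argument over $B_{\rm loc}$ (which also gives the algebraic existence of the Laurent expansion, a point you flag as your ``main obstacle'' without a plan), one constructs a basis of $\Homo_n(U;\mathcal{L}^\pm(\tau\alpha))$ by regularized Lefschetz thimbles $\widetilde\Gamma^\pm_p$ attached to critical points of $\Re F_\alpha$, for which the homology intersection matrix is the identity. Then
\begin{equation*}
\langle\omega_-,\omega_+\rangle_{ch}^\tau=\sum_{p\in{\rm Crit}(F_\alpha)}\Bigl( \int_{\widetilde{\Gamma}^-_p}e^{-\tau F_\alpha}\omega_-\Bigr)\Bigl( \int_{\widetilde{\Gamma}^+_p}e^{\tau F_\alpha}\omega_+\Bigr),
\end{equation*}
and each factor is an oscillatory integral with holomorphic phase $\pm F_\alpha$ over a real $n$-dimensional thimble. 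Each thimble integral contributes a Gaussian $\propto 1/\sqrt{\det H_{F_\alpha,x}(p)}$ (holomorphic square root fixed by the Morse coordinate), and the product gives exactly $1/\det H_{F_\alpha,x}(p)$. This factorization is what the single-integral plan cannot replace, and constructing the thimbles and their regularizations (Lemma~\ref{lem:tubular}: vanishing of twisted homology in tubular neighborhoods of $D_0,D_\infty$) is where the real work goes.

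In short: your identification of the leading coefficient with a local residue symbol is correct as an arithmetic statement, but the oscillatory integral you want to feed the stationary phase method does not represent $\langle\bullet,\bullet\rangle_{ch}$, and no single oscillatory integral over $U$ can produce a holomorphic Hessian in its leading term; you need the period factorization afforded by the twisted period relation and the Lefschetz thimble basis.
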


%\noindent
%A consequence of the  $(\Omega_U^\bullet,dF\wedge)$

\begin{cor}[Corollary \ref{cor:2.7}]\label{cor:Intro1}
Assume the conditions $(*)$ and {\rm (generic)} in \S\ref{sec:2.2}.
Then, any basis of $\Homo^n\left(U;(\Omega^\bullet_U,dF\wedge)\right)$ gives rise to that of the algebraic de Rham cohomology group $\Homo^n(\tau\alpha)$  if $\tau$ is chosen generically. 
\end{cor}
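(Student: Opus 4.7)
The plan is to extract the corollary from the non-degeneracy of the leading term in Theorem~\ref{thm:Intro1}. Set $N:=\dim_{\C}\Homo^n\!\left(U;(\Omega_U^\bullet,dF\wedge)\right)$ and fix a basis $\omega_1,\dots,\omega_N\in\Homo^0(U;\Omega_U^n)$ representing classes of the Koszul cohomology. Each $\omega_i$ also defines classes $[\omega_i]_\pm\in\Homo^n(\pm\tau\alpha)$; the goal is to show that $[\omega_1]_+,\dots,[\omega_N]_+$ form a basis of $\Homo^n(\tau\alpha)$ for generic $\tau$.

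First I would check the dimension equality $\dim_{\C}\Homo^n(\tau\alpha)=N$. Under ${\rm (generic)}$, the twisted algebraic de Rham cohomology is concentrated in degree $n$ with dimension $(-1)^n\chi(U)$ by the usual comparison argument; under $(*)$, isolated critical points and smoothness of $F$ at infinity imply that the Koszul complex is also concentrated in degree $n$, and its total dimension coincides with the global Milnor number, which by Poincar\'e--Hopf equals $(-1)^n\chi(U)$. Either invocation should already be available in \S\ref{sec:2.2}; if not, the relevant statements of Adolphson--Sperber and of the global Milnor number formula should be cited.

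Next I would form the $N\times N$ pairing matrix
\begin{equation*}
M(\tau)_{ij}:=\frac{\langle[\omega_i]_-,[\omega_j]_+\rangle_{ch}}{(2\pi\ii)^n}.
\end{equation*}
By Theorem~\ref{thm:Intro1}, $\tau^{n}M(\tau)=R+O(\tau^{-1})$ as $|\tau|\to\infty$, where $R_{ij}=K^{(0)}(\omega_i,\omega_j)$ is the matrix of the Grothendieck residue pairing on the chosen Koszul basis. Since the Grothendieck residue is a perfect pairing on $\Homo^n\!\left(U;(\Omega_U^\bullet,dF\wedge)\right)$, the matrix $R$ is invertible, hence $M(\tau)$ is invertible for all sufficiently large $|\tau|$, in particular for generic $\tau$. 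Because $\langle\bullet,\bullet\rangle_{ch}$ is a perfect pairing on $\Homo^n(-\tau\alpha)\otimes\Homo^n(\tau\alpha)$, invertibility of $M(\tau)$ forces $[\omega_1]_+,\dots,[\omega_N]_+$ to be linearly independent in $\Homo^n(\tau\alpha)$; combined with the dimension equality they form a basis.

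The main obstacle is the dimension count: the linear-algebraic argument only shows $\dim_{\C}\Homo^n(\tau\alpha)\ge N$, and the matching upper bound requires the two independent cohomological inputs described above. Once these are in place, the rest of the proof is a purely formal consequence of Theorem~\ref{thm:Intro1} and the perfectness of the cohomology intersection pairing.
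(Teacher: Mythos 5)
Your proposal is correct and follows essentially the same route as the paper's own proof of Corollary~\ref{cor:2.7}: perfectness of the Grothendieck residue pairing (Grothendieck duality) gives $\det\bigl(K^{(0)}(\omega_i,\omega_j)\bigr)\neq 0$, the Laurent expansion of Theorem~\ref{thm:2} shows the Gram matrix of $\langle\omega_i^-,\omega_j^+\rangle_{ch}^\tau$ has leading term $\tau^{-nr}\det\bigl(K^{(0)}(\omega_i,\omega_j)\bigr)$, hence non-vanishing, and specialization via the commutative diagram (\ref{diag:2.5}) closes the argument. The one genuine addition in your write-up is that you flag and spell out the dimension equality $\dim_{\C}\Homo^n(\tau\alpha)=\dim_{\C}\Homo^n\bigl(U;(\Omega_U^\bullet,dF\wedge)\bigr)=(-1)^n\chi(U)$, which the paper's proof uses but leaves implicit (it is established earlier in \S\ref{sec:2.2} via the concentration-in-degree-$n$ statement and the Varchenko-type count of critical points); making this explicit is a clean clarification rather than a departure in method.
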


\noindent
Theorem \ref{thm:Intro1} is a variant of a well-known result in singularity theory (\cite[\S4, Th\'eor\`eme]{Pham}, \cite[\S4]{KSaito}).
Indeed, the proof employed in this paper is close in spirit to that of \cite[\S4, Th\'eor\`eme]{Pham}.
%Although the proof of Theorem \ref{thm:2} is based on the standard construction of Lefschetz thimbles, a technical difficulty arises when one tries to describe its regularization.
A subtlety in the statement of Theorem \ref{thm:Intro1} is that the cohomology intersection number in (\ref{eqn:AEIntro}) is taken in various cohomology groups $\Homo^n(\tau\alpha)$.
The conditions $(*)$ and {\rm (generic)} enable us to define a family version of the cohomology intersection number which will be discussed in detail in \S\ref{sec:2.2} and \S\ref{sec:3}.
Corollary \ref{cor:Intro1} indicates an algorithm of computing a basis of the algebraic de Rham cohomology group $\Homo^n(\alpha)$ in terms of commutative algebra, whereas usual algorithm relies on Weyl algebra (cf. \cite{HNT}).
We remark that the assumptions $(*)$ and $({\rm generic})$ are made so that the phase function $F_\alpha$ has only isolated zeros and is smooth at infinity.
These conditions are described using an extrinsic data, i.e., it is formulated with the aid of a compactification $X$ of $U$ (see \S\ref{sec:2.2}).
Recently, a closely related notion of {\it critical slopes} is introduced and studied in \cite{SV}.
It would be an interesting problem to replace the conditions $(*)$ and $({\rm generic})$ by an intrinsic one.

%The stationary phase formula is intrinsically described, whereas the residue formula utilizes an extrinsic data.
Another important formula is residue formula.
The characteristic property of residue formula is that it describes the cohomology intersection number as a sum of iterated residues at maximal intersections of divisors at infinity, and therefore it is completely an extrinsic formula.
Moreover, residue formula can be formulated for a logarithmic connection in general.
Let $X$ be a smooth complex projective variety of dimension $n$, $D=\cup_{i=1}^ND_i$ be a simple normal crossing divisor, \mychange{$E$} be a vector bundle on $X$ and let $\nabla=\nabla_+:\mathcal{O}_X(\mychange{E})\rightarrow\Omega^1_X(\log D)\otimes \mychange{E}$ be a logarithmic connection.
We write $E^\vee$ (resp. \mychange{$\nabla_-:\mathcal{O}_X(E^\vee)\rightarrow\Omega^1_X(\log D)\otimes E^\vee$}) for the dual bundle (resp. dual connection).
\mychange{If none of eigenvalues of $\Res_{i}(\nabla_+):=\Res_{D_i}(\nabla_+)$ is zero,} the cohomology intersection form (at the middle degree) $\langle\bullet,\bullet\rangle_{ch}:\mathbb{H}^n\left(X;(\Omega^\bullet_X(\log D)\otimes \mychange{E^\vee},\nabla_-)\right)\otimes_{\C} \mathbb{H}^n\left(X;(\Omega^\bullet_X(\log D)\otimes \mychange{E},\nabla_+)\right)\rightarrow\C$ is canonically defined through cup product and trace map.
When $X$ is a divisor completion of $U$, $\mychange{E}$ is the trivial bundle and $\nabla_+$ is given by $d+dF\wedge$, this definition of the cohomology intersection number is compatible with the previous one.
Any logarithmic $n$-form \mychange{$\omega_+\in\Homo^0(X,\Omega^n_X(\log D)\otimes E)$ (resp. $\omega_-\in\Homo^0(X,\Omega^n_X(\log D)\otimes E^\vee)$) naturally defines a cohomology class $[\omega_+]\in\mathbb{H}^n\left(X;(\Omega^\bullet_X(\log D)\otimes {E},\nabla_+)\right)$ (resp. $[\omega_-]\in\mathbb{H}^n\left(X;(\Omega^\bullet_X(\log D)\otimes E^\vee,\nabla_-)\right)$).}
\begin{thm}[Theorem \ref{thm:1}, residue formula]
Assume the condition $(!)_+:$ none of eigenvalues of $\Res_{D_i}(\nabla)$ is a non-negative integer along any component $D_i$.
For any logarithmic $n$-forms $\omega_\pm\in\Homo^0(X,\Omega^n_X(\log D)\otimes E_\pm)$, one has a formula
\begin{equation}\label{eqn:CIN_Intro}
\langle [\omega_-],[\omega_+]\rangle_{ch}=(-2\pi\ii)^n\sum_{P_n}\langle \Res_{P_n}(\omega_+)|\Res_{P_n}(\nabla)^{-1}|\Res_{P_n}(\omega_-)\rangle,
\end{equation}
where the index $P_n=(\mu(1),\dots,\mu(n))$ runs over the set of increasing sequences $1\leq\mu(1)<\cdots<\mu(n)\leq N$ such that $\cap_{i=1}^nD_{\mu(i)}\neq\varnothing$, $\Res_{P_n}(\omega)$ is the iterated residue $\Res_{\mu(m)}\circ\cdots\circ\Res_{\mu(1)}(\omega)$, $\Res_{P_n}(\nabla):=\Res_{\mu(1)}(\nabla)|_{D(P_n)}\circ\cdots\circ\Res_{\mu(n)}(\nabla)|_{D(P_n)}$ and $\langle\bullet|\bullet|\bullet\rangle$ is the duality bracket (cf. \S\ref{sec:2.1}).
\end{thm}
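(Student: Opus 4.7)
The plan is to evaluate the cohomology intersection pairing by an explicit \v Cech--de Rham computation on a cover adapted to $D$, using the invertibility of $\Res_{D_i}(\nabla_+)$ (guaranteed by $(!)_+$) to concentrate representatives near the zero-dimensional strata $D(P_n)$. By Deligne's comparison theorem together with $(!)_+$, the algebraic log de Rham cohomology $\mathbb{H}^n(X;\Omega_X^\bullet(\log D)\otimes E,\nabla_+)$ is computed analytically, and the pairing $\langle\bullet,\bullet\rangle_{ch}$ is identified with the Poincar\'e pairing against its $\nabla_-$ counterpart. It therefore suffices to produce matching \v Cech representatives on both sides and evaluate the resulting integral.

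Concretely, I would fix a tubular neighborhood $V_i$ of each $D_i$, each biholomorphic to a neighborhood of the zero section of the normal bundle with a coordinate $z_i$, and set $V_0=X\setminus\bigcup_i \overline{W_i}$ for slightly smaller tubes. The $k$-fold intersection $V_I=V_{i_1}\cap\cdots\cap V_{i_k}$ is a tubular neighborhood of the stratum $D_I=D_{i_1}\cap\cdots\cap D_{i_k}$. Working in the \v Cech--de Rham double complex of $(\Omega^\bullet_X(\log D)\otimes E,\nabla_+)$ relative to $\mathcal V=\{V_0,\dots,V_N\}$, any log form $\omega$ on $V_I$ admits a decomposition $\omega=\tfrac{dz_{i_1}}{z_{i_1}}\wedge\alpha+\beta$ with $\beta$ free of $\tfrac{dz_{i_1}}{z_{i_1}}$; since $R_{i_1}:=\Res_{D_{i_1}}(\nabla_+)$ is invertible, one can solve $R_{i_1}s=\alpha|_{D_{i_1}}$ (as an $E$-valued relation) and subtract $\nabla_+(s)$ to cancel the pole along $D_{i_1}$. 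Iterating this \emph{Gauss elimination} over $i_1,\dots,i_k$ in a fixed order produces a canonical \v Cech representative of $[\omega_+]$ whose component on each $n$-fold intersection $V_{P_n}$, evaluated at $D(P_n)$, equals $\Res_{P_n}(\nabla_+)^{-1}\Res_{P_n}(\omega_+)$; the reversed composition order for the connection falls out automatically from the order in which poles are peeled off.

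Applying the same procedure to $\omega_-$ (here $(!)_+$ forces invertibility of $\Res_{D_i}(\nabla_-)$ too, since the latter is the negative transpose of $\Res_{D_i}(\nabla_+)$), I take the \v Cech cup product and the trace. Only top-total-degree contributions indexed by maximal strata $P_n$ survive, and the trace reduces to local integrals over small tori $\{|z_{\mu(1)}|=\cdots=|z_{\mu(n)}|=\varepsilon\}\subset V_{P_n}$. Each factor $\tfrac{dz_{\mu(j)}}{z_{\mu(j)}}$ contributes $2\pi\ii$ by Cauchy's theorem; a careful accounting of orientations combines these with Koszul signs from the double complex into an overall $(-2\pi\ii)^n$, leaving the pointwise pairing $\langle\Res_{P_n}(\omega_+)|\Res_{P_n}(\nabla)^{-1}|\Res_{P_n}(\omega_-)\rangle$ of \S\ref{sec:2.1} at each stratum.

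The main obstacle is bookkeeping rather than depth. Two subtle points need care: first, matching the \v Cech--de Rham pairing with the intrinsic cup-and-trace pairing so that the overall sign $(-2\pi\ii)^n$ (rather than $(2\pi\ii)^n$) comes out correctly, which requires fixing orientation conventions on the tubular neighborhoods once and for all; and second, verifying combinatorially that the iterated Gauss elimination genuinely yields the reversed composition $\Res_{P_n}(\nabla)=\Res_{\mu(1)}(\nabla)|_{D(P_n)}\circ\cdots\circ\Res_{\mu(n)}(\nabla)|_{D(P_n)}$, rather than its transpose or some other permutation of factors. Both can be pinned down by an explicit check at $n=2$ and then propagated by induction on \v Cech length.
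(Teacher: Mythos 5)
Your route (a \v Cech--de Rham computation on a tube cover) is genuinely different from the one taken in \S\ref{sec:33}, which follows Matsumoto's $C^\infty$/current strategy: the paper constructs local solutions $\{\psi^{P_m}\}$ of $\nabla$-equations near the strata (Lemma \ref{lem:Solution}), glues them into a global smooth form $\sum_m\Psi^m$ using cut-off functions $h_j$, $g^{\mu,\nu}$, and then sends $h_j\to\mathbf 1_{D_j}$ via the residue identity for currents (Proposition \ref{prop:ResF}). The step you call ``Gauss elimination'' is the analogue of Lemma \ref{lem:Solution}; the paper's inductive solvability is justified by the vanishing \eqref{eqn:TubeVan} on neighborhoods of $D(P_m)$, using that $C^\nu$ is quasi-isomorphic to a shift of $\iota_!\mathcal L^-$. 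So the skeleton ``push the class to codimension-$n$ strata using invertibility of residues, then pair locally'' matches; your implementation of it is different.

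That said, the two items you defer as ``bookkeeping rather than depth'' are precisely where the content of the formula lives, and leaving them to ``a careful accounting'' and ``an explicit check at $n=2$'' is a genuine gap. The overall sign (the $(2\pi\ii)^n$ of \eqref{eqn:CIN}, equivalently $(-2\pi\ii)^n$ after the convention $\langle\omega_-,\omega_+\rangle_{ch}=(-1)^n\langle\omega_+,\omega_-\rangle_{ch}$) is an output of the inductive limit computation in \S\ref{sec:33}, not an input; in a \v Cech--de Rham setup you must fix a convention for the trace on the total complex and track Koszul signs through the iterated cup products and the fiber integrations over tori, and ``Cauchy gives $2\pi\ii$ per factor'' is not enough to pin it down. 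The reversed composition order $\Res_{P_n}(\nabla)=\Res_{\mu(1)}(\nabla)|_{D(P_n)}\circ\cdots\circ\Res_{\mu(n)}(\nabla)|_{D(P_n)}$ against the iterated-residue convention $\Res_{\mu(n)}\circ\cdots\circ\Res_{\mu(1)}$ is likewise something that must be proved, not observed to ``fall out automatically''; integrability does give commutativity of the $\Res_{\mu(i)}(\nabla)$ factors, so this part is more forgiving, but it still needs an argument. Finally, your Gauss-elimination step extends $R_{i_1}^{-1}(\alpha|_{D_{i_1}})$ from $D_{i_1}$ to a tube; holomorphic extension is available on Stein tubes via Cartan's Theorem B but is neither automatic nor canonical for an arbitrary tubular neighborhood, which is exactly why the paper works with the soft (smooth) resolution. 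Filling these three points would complete a legitimate alternative proof; as written the proposal stops short of them.
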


\noindent
The assumption $(!)_+$ can be weakened as in \S\ref{sec:2.1}
As far as the author knows, the proof of this formula is known only for the case when $X$ is a projective space, $D$ is a generic hyperplane arrangement and $E$ is the trivial bundle (\cite{Matsu}), though it is widely believed and used without any proof in the literature (\cite{Goto3},\cite{MM},\cite{MimachiYoshida},\cite{M},\cite{MizeraLong}).
An obstacle for the proof is that the Hodge-to-de Rham spectral sequence does not degenerate at $E_1$-terms.
In \cite{Matsu}, K.Matsumoto used an analytic technique, namely he constructed a suitable family of smooth representatives $\omega_\pm^\ve$ employing the Dolbeault's resolution and computed the cohomology intersection number by taking a limit $\ve\rightarrow 0$.
As the argument in \cite{Matsu} heavily relies on the existence of a global coordinate on a projective space, a major modification of the proof is needed.
\S\ref{sec:33} of this paper is devoted to a proof of Theorem \ref{thm:1}.
An important consequence of the formulas (\ref{eqn:AEIntro}) and (\ref{eqn:CIN_Intro}) is the following result originally due to S.Mizera in a more restricted setting (\cite{M}).
\begin{cor}[Corollary \ref{cor:2.6}]
Assume $\alpha$ is generic. For logarithmic $n$-forms, cohomology intersection number on $\Homo^n(\alpha)$ coincides with Grothendieck residue.
\end{cor}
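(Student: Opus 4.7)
The plan is to combine the two main theorems of the paper: the residue formula (Theorem \ref{thm:1}) and the stationary phase formula (Theorem \ref{thm:Intro1}). Both are applicable simultaneously for generic $\alpha$, since the conditions $(*)$, (generic), and $(!)_+$ are open and nonempty; I would start by verifying (or simply invoking) this compatibility of hypotheses.

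First, I would substitute $\alpha \mapsto \tau\alpha$ in the residue formula applied to our pair of logarithmic $n$-forms $\omega_\pm$. The forms $\omega_\pm$ themselves carry no dependence on $\tau$, so the iterated residues $\Res_{P_n}(\omega_\pm)$ are independent of $\tau$. On the other hand, $\Res_{D_i}(\nabla) = \tau \alpha_i \cdot \id$ (up to identification in the trivial-bundle scalar setting), so $\Res_{P_n}(\nabla)^{-1}$ scales exactly as $\tau^{-n}$. The residue formula therefore yields
\begin{equation*}
\langle[\omega_-],[\omega_+]\rangle_{ch}(\tau\alpha) = c(\alpha)\,\tau^{-n},
\end{equation*}
a single monomial in $\tau^{-1}$, where
$c(\alpha) = (-2\pi\ii)^n \sum_{P_n}\langle \Res_{P_n}(\omega_+)|\Res_{P_n}(\nabla_\alpha)^{-1}|\Res_{P_n}(\omega_-)\rangle.$

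Next, I would apply Theorem \ref{thm:Intro1} to the same quantity, which provides the Laurent expansion
$\langle[\omega_-],[\omega_+]\rangle_{ch}(\tau\alpha)/(2\pi\ii)^n = \tau^{-n}\sum_{k\geq 0}K^{(k)}\tau^{-k}$
as $|\tau|\to+\infty$, with $K^{(0)}$ equal to the Grothendieck residue. Since the asymptotic expansion in powers of $\tau^{-1}$ of a given function is unique once a scale is fixed, and since by the previous step the function is already an exact monomial $c(\alpha)\tau^{-n}$, we can match terms to conclude $K^{(k)}=0$ for all $k\geq 1$ and
\begin{equation*}
\frac{c(\alpha)}{(2\pi\ii)^n} = K^{(0)} = (\text{Grothendieck residue of }\omega_-,\omega_+\text{ with respect to }F_\alpha).
\end{equation*}
Setting $\tau=1$ gives the desired identification $\langle[\omega_-],[\omega_+]\rangle_{ch}(\alpha) = (2\pi\ii)^n \cdot (\text{Grothendieck residue})$.

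The chief delicacy is bookkeeping: I need to verify that the residue formula's right-hand side genuinely collapses to a single power $\tau^{-n}$, which hinges on the fact that $\omega_\pm$ remain fixed logarithmic forms while only the connection rescales; and that the uniqueness of asymptotic expansions in $\tau^{-1}$ upgrades an $|\tau|\to\infty$ statement into an identity valid for all $\tau$. A secondary care is tracking the sign $(-1)^n$ coming from $(-2\pi\ii)^n$ in the residue formula versus $(2\pi\ii)^n$ in the stationary phase formula, and making certain that both sets of hypotheses can be imposed simultaneously for $\alpha$ in a common Zariski open set. Beyond these checks, no further analysis is required: the corollary is essentially a consequence of reading off the same intersection number in two different ways.
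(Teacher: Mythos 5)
Your proposal is correct and follows the same route as the paper: Theorem~\ref{thm:1} shows $\langle\omega_-,\omega_+\rangle_{ch}^{\tau}$ is homogeneous of degree $-n$ in $\tau$ (because the forms $\omega_\pm$ are $\tau$-independent while $\Res_{P_n}(\nabla)^{-1}$ scales as $\tau^{-n}$), and combining this with the Laurent expansion of Theorem~\ref{thm:2} forces $K^{(k)}=0$ for $k\geq 1$, leaving only the Grothendieck residue $K^{(0)}$. Your added bookkeeping on signs and hypothesis compatibility is sensible but does not change the argument, which is exactly the two-line proof given in the paper.
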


\noindent
Once the corollary above is formulated and proved precisely, we can relate the stringy integral to the cohomology intersection number.
{\it Positive geometry} consists of a triplet of a variety $X$, its positive part $X_{\geq 0}$ and the top-dimensional logarithmic differential form $\Omega$ called the canonical form with some technical conditions (\cite{AHP}).
Once a positive geometry is given, one considers an integral of a regulator term times the canonical form $\Omega$ over the positive part $X_{\geq 0}$ to derive various physical properties.
The integral is called the stringy integral (\cite{AH}).
In this paper, we assume that, after a suitable re-parametrization of $X_{\geq 0}$, the stringy integral takes the form
\begin{equation}\label{eqn:SI}
\int_{\R^n_{\geq 0}}x_1^{u_1}\cdots x_n^{u_n}q_1(x)^{-v_1}\cdots q_e(x)^{-v_e}\omega_0,
\end{equation}
where $q_i(x)$ are Laurent polynomials with positive coefficients, $u_i,v_j$ are real parameters with $v_j>0$ and $\omega_0=\frac{dx_1}{x_1}\wedge\cdots\wedge\frac{dx_n}{x_n}$ is the canonical form in a new coordinate $x$.
In \cite{StuT}, the integral (\ref{eqn:SI}) is referred to as {\it marginal likelihood integral} of a positive model.
Although the integral (\ref{eqn:SI}) is a transcendental function of $u_i,v_j$ in general, its leading term has a polytopal formula.
Namely, under a suitable assumption on parameters $u_i,v_j$, the limit
\begin{equation}\label{eqn:amp0}
{\rm amplitude}:=\lim_{\ve\rightarrow+0}\ve^n\int_{\R^n_{\geq 0}}(x_1^{u_1}\cdots x_n^{u_n}q_1(x)^{-v_1}\cdots q_e(x)^{-v_e})^\ve\omega_0
\end{equation}
exists and is equal to the normalized volume of a convex polytope (\cite[\S2 Claim 1 and \S4 Claim 2]{AH}).
In this setting, we set $U:=\{ x\in(\C^*)^n\mid q_j(x)\neq 0, \; j=1,\dots,e\}$.
%Namely, we regard $U$ as a closed subvariety of a complex torus in an obvious way and we assume that there is a fan structure $\Sigma$ on the tropical variety ${\rm Trop}(U)$ such that the closure $X$ of $U$ in the associated toric variety is smooth and $D=X\setminus U$ is a simple normal crossing divisor (\cite{Hacking}).
%When $q_1,\dots,q_e$ are nonsingular in the sense of \cite{Hov}, we have a toric compactification $X$ of $U$ so that the condition $(*)$ is true.
We can regard the canonical form $\omega_0$ as an element of an algebraic de Rham cohomology group associated to the integrand of (\ref{eqn:SI}).
In this paper, we obtain a
\begin{thm}[Theorem \ref{thm:Amplitude}]
For generic parameters $u_1,\dots,u_n,v_1,\dots,v_e$, one has a formula
\begin{equation}\label{eqn:medama}
{\rm amplitude}=\frac{\langle[\omega_0],[\omega_0]\rangle_{ch}}{(-2\pi\ii)^n}.
\end{equation}
\end{thm}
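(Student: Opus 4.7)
The strategy is to compute both sides of the identity as the normalized volume of the same polytope $P$: the amplitude via \cite{AH}, and the cohomology intersection number via the residue formula on a suitable toric compactification.

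First, from \cite[\S4, Claim~2]{AH}, after the substitution $y = \log x$ and the rescaling $z = \ve y$, the tropical limit $G^{\mathrm{trop}}(z) := \lim_{\ve\to 0^+}\ve\,G(z/\ve)$ of $G(y) = \sum_i u_i y_i - \sum_j v_j \log q_j(e^y)$ is concave and piecewise linear, and the amplitude equals the normalized volume $\mathrm{Vol}_n(P)$ of the polytope $P$ determined by its linearity domains. This reduces the theorem to showing $\langle[\omega_0],[\omega_0]\rangle_{ch} = (-2\pi\ii)^n\,\mathrm{Vol}_n(P)$.

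Second, I would fix a smooth projective toric compactification $X$ of $(\C^*)^n$ whose fan refines both the normal fan of $P$ and the Newton fans of the $q_j$, and then blow up further so that $D := X \setminus U$ is a simple normal crossing divisor meeting each closure $\overline{\{q_j=0\}}$ transversally. The canonical form $\omega_0 = \prod_i dx_i/x_i$ is then a global section of $\Omega^n_X(\log D)$. For generic $(u,v)$ the extended logarithmic connection $\nabla = d + \sum_i u_i\,dx_i/x_i - \sum_j v_j\,dq_j/q_j$ satisfies the non-resonance condition $(!)_+$ of Theorem~\ref{thm:1}, so the residue formula applies:
\[
\langle[\omega_0],[\omega_0]\rangle_{ch} \;=\; (-2\pi\ii)^n \sum_{P_n} \langle\Res_{P_n}\omega_0 \,|\, \Res_{P_n}(\nabla)^{-1} \,|\, \Res_{P_n}\omega_0\rangle,
\]
summed over the torus-fixed points of $X$. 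At a fixed point corresponding to a vertex of $P$, toric local coordinates bring $\omega_0$ to $\pm\prod d\xi_i/\xi_i$, so $\Res_{P_n}\omega_0 = \pm 1$, while $\Res_{P_n}(\nabla)$ is a diagonal matrix with entries of the form $\langle u, n_i\rangle - \sum_j v_j(\nu_j)_i$, where the $n_i$ are the primitive ray generators of the maximal cone at the vertex and the $\nu_j$ encode the local monomialization of the $q_j$ there. Summing contributions vertex by vertex is designed to reproduce the Brion--Lawrence expansion of $\mathrm{Vol}_n(P)$, which yields the desired identity.

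The main obstacle is this final combinatorial matching. The toric compactification $X$ will in general have torus-fixed points beyond those in bijection with the vertices of $P$ (introduced by resolution), but Brion's formula is invariant under smooth refinement of the normal fan, so these ``spurious'' contributions must telescope to zero; verifying this cancellation explicitly in terms of the iterated residues is the key step. Signs arising from the ordering of the iterated residues must also be tracked carefully and reconciled with the sign convention in the Brion--Lawrence formula. A secondary technical point is to ensure that the blow-ups preserve the genericity condition, so that no residue of $\nabla$ along any newly created boundary component acquires a non-negative integer eigenvalue and $(!)_+$ continues to hold on the chosen $X$.
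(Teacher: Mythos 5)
Your approach is genuinely different from the paper's. The paper proves $(\ref{eqn:3rd})$ in one line from Theorem~\ref{thm:2} (stationary phase/Grothendieck residue): amplitude $=\sum_{p\in\mathrm{Crit}(L)}(-1)^n/\det H_L^{\mathrm{toric}}(p)$ by Sturmfels--Telen, and that sum is identified with $\langle\omega_0^+,\omega_0^-\rangle_{ch}/(2\pi\ii)^n$ via Corollary~\ref{cor:2.6}. You instead propose to invoke Theorem~\ref{thm:1} (the residue formula along the SNC boundary of a toric compactification) and then match the iterated-residue sum against a Brion--Lawrence decomposition of $\vol_\Z(P)$. That is a plausible plan, and the two theorems do in fact compute the same pairing, so success is not ruled out --- but as written your argument leaves the crux entirely open.

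The gap you acknowledge is essentially the entire content of the proof, and there are further technical difficulties you have not fully registered. First, the sum $\sum_{P_n}$ in Theorem~\ref{thm:1} is \emph{not} over torus-fixed points of $X$: the SNC divisor $D=X\setminus U$ includes the proper transforms of $\{q_j=0\}$ and the exceptional divisors produced when you resolve the toric boundary intersected with these hypersurfaces. You would need to show that the iterated residues $\Res_{P_n}\omega_0$ vanish whenever $P_n$ contains such a non-toric component, and after the blow-ups this is not automatic because $\omega_0$ can pick up log poles along exceptional divisors lying over the intersection of $\{q_j=0\}$ with the toric boundary. Second, even among the genuinely toric contributions, the fan refinement forced by the SNC and transversality requirements introduces extra top cones, and you have not demonstrated the claimed telescoping beyond asserting that Brion's formula is refinement-invariant --- one still needs to check that the refinement-invariance on the polytope side is mirrored exactly by cancellations among $\langle\Res_{P_n}\omega_0\,|\,\Res_{P_n}(\nabla)^{-1}\,|\,\Res_{P_n}\omega_0\rangle$, including signs from the ordering in $\Res_{P_n}$. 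The paper's route via Theorem~\ref{thm:2} avoids all of this: the critical points of $L$ are an intrinsic object that both Sturmfels--Telen and the stationary phase formula compute with, so no fan combinatorics is needed and the identity drops out immediately. If you want to pursue the residue-formula route as an independent check, the appendix of the paper (Filliman duality and the Cayley trick for the dual volume) shows how such a triangulation-based identity can be organized; that would be the natural place to borrow machinery.
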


\noindent
In view of the importance of the cohomology intersection number and positive geoemetry in statistics and physics, the formula (\ref{eqn:medama}) illustrates how these notions are closely related.
%Note that the identity (\ref{eqn:medama}) is false if we remove the assumption that $q_1,\dots,q_e$ are nonsingular (see Remark\ref{rmk:medama}).
In \S\ref{sec:4}, we will also provide a formula of the cohomology intersection number in terms of a regular triangulation of a polytope.
This function {\tt mt\_gkz.principal\_normalizing\_constant} is an implementation in risa/asir package {\tt mt\_gkz.rr} (\cite{ICMS}, \cite{MTGKZ}).
%The author hopes that the results of this article help readers to investigate links among intersection theory, scattering amplitude, and algebraic statistics. 

%Despite the fact that there have been numerous efforts to evaluate the cohomology intersection numbers (\cite{CM}, \cite{Goto3}, \cite{MM}, \cite{MimachiYoshida}, \cite{MizeraLong} and references therein), most of the existing methods utilize residue method in the spirit of \cite{Matsu}.

%We hope that this result enhance the bridge between statistics and physics (\cite{StuT}).

%Combining these formulas, we can re-prove the localization theorem of cohomology intersection number for logarithmic forms: c.i.n. is a Grothendieck residue.

%Question: Is there any algorithm for computing Grothendieck residue?

%Finally, we provide an application of the results above to the leading term of the stringy integral ([??]).

%A bridge between algebraic statistics and quantum field theory was made by \cite{StuT}.

\mychange{The structure of the paper is as follows:}
in \S\ref{sec:2}, we provide the precise formulations of our results.
For the sake of clarity, we fist discuss residue formula in \S\ref{sec:2.1} and then discuss stationary phase formula in \S\ref{sec:2.2}.
\S\ref{sec:4} is devoted to the formulation of Theorem \ref{thm:Amplitude} as well as its proof.
We will also deal with the complex stringy integral of \cite[\S8]{AH}.
In \S\ref{sec:33} and \S\ref{sec:3}, we prove Theorem \ref{thm:1} and Theorem \ref{thm:2}.
The author is grateful to Bernd Sturmfels who let him know the paper \cite{StuT}.
He thanks Yoshiaki Goto and Nobuki Takayama for valuable discussions.
\mychange{He also thanks an anonymous referee whose comment improved the exposition of this paper.}
This work is supported by JSPS KAKENHI Grant Number 19K14554 and JST CREST Grant Number JP19209317 including AIP challenge program.

%Namely, we construct a representative of the regularized cohomology class so that it contains smooth functions $h_1,\dots,h_N$ which are supported in a neighborhood of $D_1,\dots,D_N$.
%As each $h_i$ tends to the characteristic function of $D_i$, we

%In \S\ref{sec:2}, we consider $\langle\omega^+,\omega^-\rangle_{ch}$, while in \S\ref{sec:3} and \S\ref{sec:4} we use $\langle\omega^-,\omega^+\rangle_{ch}$. They are different up to sign $(-1)^n$. Rewrite it.

%Cohomology intersection numbers (\cite{CM}, \cite{FSY}, \cite{MH}, \cite{Matsu}, \cite{SOT} and references therein).

%Interesting challenges: compute the Koszul homology group as fast as possible; For a given variety $U$, determine the set of parameters when the purity and the regularization isomorphism is true.

\section{Results}\label{sec:2}
\subsection{Residue formula}\label{sec:2.1}
Let $X$ be a smooth projective variety over $\C$, let $D=\sum_{j=1}^ND_j\subset X$ be a simple normal crossing divisor and let $(E,\nabla)$ be a logarithmic connection with poles along $D$.
The symbol $\Omega^p_{\log}$ denotes the sheaf of logarithmic $p$-forms $\Omega^p_X(\log D)$.
%We set $E_+:=E$ and $E_-:=E^\vee$ where $E^\vee$ is the dual bundle of $E$.
\mychange{We write $\Omega_+^p$ for the sheaf $\Omega^p_{\log}\otimes E$ and write $\Omega_-^p$ for the sheaf $\Omega^p_{\log}\otimes E^\vee$ where $E^\vee$ is the dual bundle of $E$.
We set $\nabla_+=\nabla$ and write $(E^\vee,\nabla_-)$ for the dual connection of $(E,\nabla)$.}
For any integer $j\in\Z$, $\nabla_\pm$ naturally induces a logarithmic connection on $E_\pm(jD)$ which is still denoted by $\nabla_\pm$.
\mychange{For any $j\in\Z$, let the symbol
\begin{equation}
\Homo^p_\pm(jD):=\mathbb{H}^p\left(X;(\Omega_\pm^\bullet(jD),\nabla_{\pm})\right)
\end{equation}
denote logarithmic de Rham cohomology group.
We also set
\begin{equation}
\Homo^p_\pm:=\Homo^p_\pm(0\cdot D).
\end{equation}
}
Wedge product $(\Omega_-^{\bullet},\nabla_-)[n]\otimes_{\C} (\Omega_+^\bullet(-D),\nabla_+)\rightarrow (\Omega_{\log}^{\bullet}(-D),d)[n]$ induces cup product of hypercohomology cohomology groups $\Homo^p_-\otimes\Homo^{2n-p}_+(-D)\rightarrow\mathbb{H}^{2n}\left(X;(\Omega_{\log}^{\bullet}(-D),d)\right)$.
Combining this operation with trace map ${\rm tr}:\mathbb{H}^{2n}\left(X;(\Omega_{\log}^{\bullet}(-D),d)\right)\rightarrow\C$, we obtain a perfect pairing $\langle\bullet,\bullet\rangle_{+}:\Homo^p_-\otimes\Homo^{2n-p}_+(-D)\rightarrow\C$ (see \cite[Proposition 50.20.4]{ST} whose proof can easily be adapted to the current setting).
Let $\mathcal{E}_X^{(p,q)}$ denote the sheaf of $(p,q)$-differential forms on $X^{an}$.
Here, the superscript $an$ stands for analytification.
We set $\EE_{\pm}^{p,q}:=\Omega_{\pm}^p\otimes_{\mathcal{O}^{an}_X}\EE^{(0,q)}_X$ and $\EE_{\pm}^{k}:=\oplus_{p+q=k}\EE_{\pm}^{p,q}$.
Dolbeault's lemma shows that the natural morphism $(\Omega_{\pm}^\bullet(jD),\nabla_\pm)^{an}\rightarrow (\EE_{\pm}^\bullet(jD),\nabla_\pm+\dbar)$ is a quasi-isomorphism.
Taking into account that the natural morphism $\mathbb{H}^{2n}\left(X;(\Omega_{\log}^{\bullet}(-D),d)\right)\rightarrow\Homo^{n}\left(X;\Omega_X^{n}\right)$ is an isomorphism, we see that the pairing $\langle\bullet,\bullet\rangle_{+}$ is given by \mychange{a formula} $\langle\bullet,\bullet\rangle_{+}:\Homo^p_-\otimes_{\C}\Homo^{2n-p}_+(-D)\ni [\omega_-]\otimes[\omega_+]\mapsto \frac{1}{(2\pi\ii)^n}\int_X\omega_-\wedge\omega_+\in\C$ where $\omega_\pm$ are representatives by smooth forms.
Note that \mychange{the value} $\langle[\omega_-],[\omega_+]\rangle_{+}$ belongs to a subfield $k\subset \C$ when $X,$ $D$ and $(E,\nabla)$ are defined over $k$.
In the same manner, one can also define a perfect pairing $\langle\bullet,\bullet\rangle_{-}:\Homo^p_-(-D)\otimes_{\C}\Homo^{2n-p}_+\rightarrow\C$.
Let \mychange{the symbol} $\Res_i(E,\nabla)\in\Homo^0\left( D_i;{\rm End}(E)\right)$ denote the residue of $(E,\nabla)$ along $D_i$ (\cite[2.5]{EV}).
We simply write $\Res_i(\nabla)$ \mychange{instead of} $\Res_i(E,\nabla)$ when there is no fear of confusion.
\mychange{Let us consider the following conditions} 

\begin{equation*}
\text{\underline{$(!)_+$:} none of eigenvalues of $\Res_i(\nabla)$ is in $\Z_{\leq 0}$.}
\end{equation*}
\begin{equation*}
\text{\underline{$(!)_-$:} none of eigenvalues of $\Res_i(\nabla)$ is in $\Z_{\geq 0}$.}
\end{equation*}

\noindent
Under the condition $(!)_\pm$, one has a natural isomorphism $\Homo^p_\pm(-D)\tilde\rightarrow\Homo^p_\pm$ for any $p$ (\cite[2.9]{EV}). When the condition $(!)_\pm$ holds, we write $\reg_\pm:\Homo^p_\pm\tilde\rightarrow\Homo^p_\pm(-D)$ for the corresponding isomorphism. Note that \mychange{the cohomology group is purely $n$-codimensional, i.e. the identity $\Homo^p_\pm=0$ ($p\neq n$) is true if the condition
\begin{equation*}
\text{\underline{(reg):} none of eigenvalues of $\Res_i(E,\nabla)$ is in $\Z$}
\end{equation*}
\noindent
holds.}
\mychange{In the following,} we assume either $(!)_+$ or $(!)_-$.
\mychange{For any element $[\omega_\pm]\in\Homo^n_\pm$, we set $\langle [\omega_-],[\omega_+]\rangle_{ch}:=(2\pi\ii)^n\langle [\omega_-],\reg_+[\omega_+]\rangle_{+}=(2\pi\ii)^n\langle \reg_-[\omega_-],[\omega_+]\rangle_{-}$.}
%When the condition (reg) is true, these definitions are compatible.
We call $\langle \bullet,\bullet\rangle_{ch}:\Homo^n_-\otimes_{\C}\Homo^n_+\rightarrow\C$ the cohomology intersection form.\footnote{\mychange{Although the cohomology intersection form $\langle \bullet,\bullet\rangle_{ch}$ depends on a given integrable connection $(E,\nabla)$, we use a notation which does not clarify this dependence as it makes the expression simple and it is customary in the literature (\cite{CM},\cite{Goto3}, \cite{MimachiYoshida}).}}
It is customary to write $\langle\omega_-,\omega_+\rangle$ for $\langle[\omega_-],[\omega_+]\rangle_{ch}$.
When (reg) is true, the cohomology intersection form is compatible with \mychange{topological pairings} as follows:
we set $U:=X\setminus D$.
For the sake of simplicity of the exposition, let us assume that $U$ is affine.
\footnote{We can easily generalize Theorem \ref{thm:1} to the case when $U$ is not affine using hypercohomology group. We do not assume that $D$ is an ample divisor in this article though we can always choose it to be so (\cite[Theorem 1]{Goodman}) if we replace $X$ by a suitable blowing-up. However, this process may violate our assumption on the residues of $(E,\nabla)$ in general.}
Let $\mathcal{L}^\pm$ be the sheaf on $U^{an}$ of flat sections of the analytification $\nabla_\mp^{an}$ and let $\iota:U^{an}\hookrightarrow X^{an}$ be the natural embedding.
The condition (reg) implies that there is a canonical isomorphism $\iota_!\mathcal{L}^\pm\simeq(\Omega_{\mp}^\bullet(jD),\nabla_\mp)^{an}$ for any $j\in\Z$ in the derived category (\cite[II, 3.13,3.14]{Del} and \cite[2.10]{EV}).
%\simeq(\Omega_{X}^\bullet(*D),\nabla_\mp)^{an}
In view of GAGA principle (\cite{Serre}), we obtain a canonical isomorphism $\Homo_\pm^n\simeq\Homo^n(\Omega_{U}^\bullet(U),\nabla_\pm)$.
Moreover, the canonical morphism $\iota_!\mathcal{L}^\pm\rightarrow\R\iota_*\mathcal{L}^\pm$ is also an isomorphism (see the proof of \cite[Lemma 3]{CDO} which can be adapted in our setting immediately).
In view of Poincar\'e duality, this yields the comparison isomorphism $\Homo_n(U^{an};\mathcal{L}^\pm)\overset{\sim}{\rightarrow}\Homo_n^{\rm lf}(U^{an};\mathcal{L}^\pm)$ where the superscript {\rm lf} stands for the locally finite (or Borel-Moore) homology group.
Let $\langle\bullet,\bullet\rangle_{per}:\Homo_n(U^{an};\mathcal{L}^\pm)\otimes_{\C}\Homo^n_\pm\ni [\Gamma]\otimes[\omega]\mapsto\int_{\Gamma}\omega\in\C$ denote the period pairing and let $\langle\bullet,\bullet\rangle_{h}:\Homo_n(U^{an};\mathcal{L}^-)\otimes_{\C}\Homo_n(U^{an};\mathcal{L}^+)\rightarrow\C$ be the homology intersection pairing.
Fixing a basis $\{[\Gamma_i^\pm]\}_{i=1}^r\subset\Homo_n(U^{an};\mathcal{L}^\pm)$, we write $(h^{-1}_{ij})_{i,j=1}^r$ for the inverse matrix of a matrix $\left( \langle[\Gamma_i^-],[\Gamma_j^+]\rangle_h\right)_{i,j=1}^r$. For any $[\omega_\pm]\in\Homo_\pm^n$, \mychange{the following identity is true (\cite{CM}, \cite{FSY}):
\begin{equation}\label{eqn:TPR1}
\langle\omega_-,\omega_+\rangle_{ch}=\sum_{i,j=1}^r\left(\int_{\Gamma_j^+}\omega_+\right)h^{-1}_{ji}\left(\int_{\Gamma_i^-}\omega_-\right).
\end{equation}
}
We call the identity (\ref{eqn:TPR1}) twisted period relation.
Note that the number $(2\pi\ii)^n$ does not appear in the formula above.

We also introduce a standard notation $\langle\omega_+,\omega_-\rangle_{ch}:=(-1)^n\langle\omega_-,\omega_+\rangle_{ch}$ for $[\omega_\pm]\in\Homo^n_\pm$.
For any section $\omega$ of $\Omega_{\pm}^p$, we define the symbol $\Res_1(\omega)$ as follows: let $x=(x_1,\dots,x_n)$ be a local coordinate so that \mychange{the equality} $D_1=\{ x_1=0\}$ \mychange{holds locally and} $\omega$ is locally expressed as $\omega=\frac{dx_1}{x_1}\wedge\omega_1+\omega_2$ where $\omega_1$ and $\omega_2$ do not have poles along $D_1$.
We set $\Res_1(\omega):=\rest_1(\omega_1)$ where $\rest_1$ denotes the restriction to $D_1$.
We can also define the symbol $\Res_i(\omega)$ for any $i=1,\dots,N$.
The following commutativity relation is useful.
\begin{prop}
For any section $\omega$ of $\Omega^p(\log (D\setminus D_i))\otimes E$, one has a relation
\begin{equation}
\Res_i(\nabla\omega)=\Res_i(\nabla)\rest_i(\omega).
\end{equation}
\end{prop}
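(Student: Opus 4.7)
The plan is to reduce the identity to a purely local computation near a point of $D_i$. Choose coordinates $(x_1,\dots,x_n)$ on a polydisk $V$ such that $D_i\cap V=\{x_1=0\}$ and the other components of $D$ meeting $V$ sit among the hyperplanes $\{x_j=0\}$ with $j\neq 1$. After shrinking $V$, trivialize $E|_V\cong\mathcal{O}_V^{\oplus r}$ so that $\nabla=d+A$ with $A$ an $\End(\C^r)$-valued logarithmic one-form, and decompose
\begin{equation*}
A=\frac{dx_1}{x_1}A_1+A_2,
\end{equation*}
where $A_1$ is holomorphic on $V$ and $A_2$ is logarithmic along $D\setminus D_i$ with no pole on $D_i$. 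By the very definition of the residue of a logarithmic connection, $\rest_i(A_1)=\Res_i(\nabla)$.

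Next, since $\omega$ has no pole along $D_i$, I would decompose $\omega=\alpha+dx_1\wedge\beta$, where $\alpha$ and $\beta$ do not involve $dx_1$ and are holomorphic in $x_1$ (they may still carry log poles along other components of $D\setminus D_i$). In particular $\rest_i(\omega)=\rest_i(\alpha)$. Expanding
\begin{equation*}
\nabla\omega=d\omega+\frac{dx_1}{x_1}\wedge A_1\omega+A_2\wedge\omega,
\end{equation*}
I would extract the coefficient of $\frac{dx_1}{x_1}$. Since $\alpha$, $\beta$, and $A_2$ are all regular along $D_i$, and since $d$ introduces no new polar terms, neither $d\omega$ nor $A_2\wedge\omega$ contributes a $\frac{dx_1}{x_1}$-component. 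The middle term simplifies via $dx_1\wedge dx_1=0$:
\begin{equation*}
\frac{dx_1}{x_1}\wedge A_1\omega=\frac{dx_1}{x_1}\wedge A_1\alpha+\frac{dx_1}{x_1}\wedge dx_1\wedge A_1\beta=\frac{dx_1}{x_1}\wedge A_1\alpha.
\end{equation*}
Applying the definition of $\Res_i$ to $\nabla\omega$ therefore gives
\begin{equation*}
\Res_i(\nabla\omega)=\rest_i(A_1\alpha)=\rest_i(A_1)\,\rest_i(\alpha)=\Res_i(\nabla)\,\rest_i(\omega).
\end{equation*}

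The only point that requires a moment of care is the intrinsic character of the local identity: the decomposition $A=\frac{dx_1}{x_1}A_1+A_2$ depends on the choice of defining function $x_1$ for $D_i$, but both $\rest_i(A_1)=\Res_i(\nabla)$ and $\rest_i(\omega)$ are known to be independent of that choice. Hence the local equality globalizes to a statement about sections on $D_i$. There is no substantial obstacle here; the proposition is, in essence, a graded Leibniz rule for the $\frac{dx_1}{x_1}$-component of $\nabla\omega$, made rigorous by the above logarithmic decompositions.
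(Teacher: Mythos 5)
The paper states this proposition without proof, so there is nothing to compare your argument against; it correctly supplies what the paper takes as a routine verification. Your local decomposition $A=\frac{dx_1}{x_1}A_1+A_2$ together with $\omega=\alpha+dx_1\wedge\beta$ is exactly the standard way to isolate the $\frac{dx_1}{x_1}$-component of $\nabla\omega$, and the observations that $d\omega$ and $A_2\wedge\omega$ contribute no pole along $D_i$, while $\frac{dx_1}{x_1}\wedge A_1\,dx_1\wedge\beta$ dies by $dx_1\wedge dx_1=0$, are the whole content. One minor point you could make explicit: the equality $\Res_i(\nabla)=\rest_i(A_1)$ and the independence of $\rest_i$ from the choice of defining equation are standard facts (e.g.\ \cite[2.5]{EV}), so the globalization step you flag at the end is indeed harmless. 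The argument is correct as written.
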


For any multi-index $P_m=\{ \mu(1),\dots,\mu(m)\}\subset\{ 1,\dots,N\}$ of cardinality $m$ ($\leq n$), we set $D(P_m)=\cap_{i=1}^mD_{\mu(i)}$ and $\Res_{P_m}(\nabla)=\Res_{\mu(1)}(\nabla)|_{D({P_m})}\circ\cdots\circ\Res_{\mu(m)}(\nabla)|_{D({P_m})}\in\Homo^0\left( D(P_m);{\rm End}(E)\right)$. Since $\nabla$ is integrable, we have a commutativity relation $[\Res_i(\nabla)|_{D(P_m)},\Res_j(\nabla)|_{D(P_m)}]=0$ where the bracket is commutator product.
Therefore, the definition of $\Res_{P_m}(\nabla)$ does not depend on the choice of the order of $\mu(1),\dots,\mu(m)$.
On the other hand, we fix the order as $\mu(1)<\cdots<\mu(m)$ and set $\Res_{P_m}(\omega):=\Res_{\mu(m)}\circ\cdots\circ\Res_{\mu(1)}\omega$ for any section $\omega$ of $\Omega^p_{\pm}$. Any element $\omega$ of $\Homo^0(X;\Omega^n_{\pm})$ defines a cohomology class $[\omega]$ of $\Homo^n_+$ or $\Homo^n_-$ through the natural morphism $\Homo^0\left(X;\Omega_{\pm}^n\right)\rightarrow\Homo^n_{\pm}$. For any $\omega_\pm\in \Homo^0\left(X;\Omega_{\pm}^n\right)$, we write $\langle \Res_{P_n}(\omega_+)|\Res_{P_n}(\nabla)^{-1}|\Res_{P_n}(\omega_+)\rangle$ for $\sum_{z\in D(P_n)}\langle \Res_{P_n}(\nabla)^{-1}_z\Res_{P_n}(\omega_+)_z,\Res_{P_n}(\omega_-)_z\rangle$ where $\langle\bullet,\bullet\rangle$ denotes the duality bracket $\langle\bullet,\bullet\rangle:\mychange{E_{z}\otimes E_{z}^\vee}\rightarrow\C$.
The following theorem is a general form of the results \cite{CM} and \cite{Matsu}.

\begin{thm}\label{thm:1}
Assume either $(!)_+$ or $(!)_-$. For any $\omega_\pm\in\Homo^0\left(X;\Omega_{\pm}^n\right)$, one has
\begin{equation}\label{eqn:CIN}
\langle [\omega_+],[\omega_-]\rangle_{ch}=(2\pi\ii)^n\sum_{P_n;\ D({P_n})\neq\varnothing}\langle \Res_{P_n}(\omega_+)|\Res_{P_n}(\nabla)^{-1}|\Res_{P_n}(\omega_-)\rangle.
\end{equation}

\end{thm}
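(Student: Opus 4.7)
The plan is to compute $\langle[\omega_+],[\omega_-]\rangle_{ch}$ by picking a smooth representative $\omega_+^{\rm reg}$ of $\reg_+[\omega_+]\in\Homo^n_+(-D)$ in the Dolbeault complex $(\EE_+^\bullet(-D),\nabla_++\dbar)$; the explicit formula for $\langle\bullet,\bullet\rangle_+$ recalled in \S\ref{sec:2.1} then gives
\[
\langle[\omega_+],[\omega_-]\rangle_{ch}=(-1)^n\int_X\omega_-\wedge\omega_+^{\rm reg}.
\]
The task splits into two parts: (a) exhibit $\omega_+^{\rm reg}$ concretely as $\omega_+-(\nabla_++\dbar)\eta$ for a smooth form $\eta$; (b) evaluate the integral by localizing to the codimension-$n$ strata $D(P_n)$.

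For (a) I would proceed locally and inductively on the divisors $D_i$ along which $\omega_+$ still has a pole. Near $D_i=\{x_i=0\}$ write $\omega_+=\frac{dx_i}{x_i}\wedge\alpha+\beta$; since $(!)_+$ guarantees that $\Res_i(\nabla)$ is invertible on $D_i$, the section $\tilde\alpha_i:=\Res_i(\nabla)^{-1}\rest_i(\alpha)$ of $E|_{D_i}$-valued $(n-1)$-forms is defined. Extending $\tilde\alpha_i$ to a tubular neighborhood of $D_i$ and smoothing via a cutoff $\chi(|x_i|^2)$ equal to $1$ near $D_i$, one constructs $\eta_i$ so that $(\nabla_++\dbar)\eta_i$ cancels the logarithmic pole of $\omega_+$ along $D_i$, introducing new poles only on deeper strata. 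A partition of unity glues the $\eta_i$'s, and iterated application over all components produces a smooth representative $\omega_+^{\rm reg}$ that vanishes to first order along $D$.

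For (b) I would excise $\epsilon$-tubular neighborhoods $U_\epsilon(D_i)$ and apply Stokes' theorem on $X_\epsilon:=X\setminus\bigcup_i U_\epsilon(D_i)$, using $\nabla_-\omega_-=0$ to convert the bulk into boundary contributions on $\bigcup_i\partial U_\epsilon(D_i)$. In the limit $\epsilon\to 0$, boundary pieces along strata of codimension $<n$ vanish because the regularized integrand is integrable there, while near each $P\in D(P_n)$ one is reduced to $n$ nested contour integrals $\frac{1}{2\pi\ii}\oint\frac{dx_{\mu(i)}}{x_{\mu(i)}}$. The $\Res_{\mu(i)}(\nabla)^{-1}$-factors built into $\eta$ combine, by the commutativity $[\Res_i(\nabla),\Res_j(\nabla)]|_{D(P_n)}=0$, into $\Res_{P_n}(\nabla)^{-1}$, and the remaining data are exactly $\Res_{P_n}(\omega_\pm)$, paired by the duality bracket. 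Summing over the $P_n$ with $D(P_n)\neq\varnothing$ yields (\ref{eqn:CIN}), with the prefactor $(2\pi\ii)^n$ recording the $n$ contour integrals.

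The main obstacle is step (a): as noted in the introduction, in \cite{Matsu} the construction exploited global homogeneous coordinates on $\PP^n$, so the extension to a general smooth projective $X$ forces us to build $\omega_+^{\rm reg}$ patch by patch and verify that the ambiguities between local choices lie in $\mathrm{Im}(\nabla_++\dbar)$; this rests on the integrability of $\nabla$ and on commutativity of the residues along intersecting components. A secondary subtlety is the $\epsilon\to 0$ limit near deeper strata $D(P_m)$ with $m<n$, where $\omega_+^{\rm reg}$ is only continuous: one needs uniform bounds on the iterated regularization to commute the limit with the boundary integrals and to cancel the apparent singularities against those of $\omega_-$.
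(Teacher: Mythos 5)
Your high-level strategy is the same as the paper's: regularize $\omega_+$ by subtracting a $(\nabla_++\dbar)$-exact term built from iterated residues and cutoff functions (Matsumoto's construction), and then evaluate the pairing by pushing the integral to the codimension-$n$ strata via a residue/Stokes argument. However, you yourself flag the decisive point --- how to construct the local correction terms and show that the choices patch together on a general $X$ --- as ``the main obstacle,'' and you do not actually resolve it. That is precisely the gap the paper closes, and appealing to ``integrability of $\nabla$ and commutativity of residues'' is necessary but not sufficient.

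Concretely, what is missing is the content of Lemma~\ref{lem:Solution}. To regularize along a stratum $D(P_m)$ of codimension $m\ge 2$, one must solve a $\nabla$-descent equation of the form $(-1)^{m-1}\nabla\psi^{P_m}=\tilde\psi$ in a neighborhood of $D(P_m)$, where $\tilde\psi$ is an alternating sum of already-constructed $\psi^{P_{m-1}}$. Integrability of $\nabla$ only guarantees that $\tilde\psi$ is $\nabla$-closed (via the sign identity $\delta(P_{m-2};P^1_{m-1})\delta(P^1_{m-1};P_m)=-\delta(P_{m-2};P^2_{m-1})\delta(P^2_{m-1};P_m)$); it does not produce a primitive. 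The paper produces one by noting that, under $(!)_+$, the relevant fine complex $C^\nu$ is quasi-isomorphic to $\iota_!\mathcal{L}^-$, whose stalks vanish along $D$; hence $\Homo^k(D(P_m);\iota_!\mathcal{L}^-)=\varinjlim_{D(P_m)\subset V}\mathbb{H}^k(V;C^\nu)=0$, and by softness of $C^\nu$ a global primitive exists on a small enough neighborhood $V(P_m)$. Your inductive ``extend $\Res_i(\nabla)^{-1}\rest_i(\alpha)$ to a tube and cut off'' step only handles the leading pole along a single $D_i$ and does not by itself control the higher-codimension strata or show that the local choices agree up to $\mathrm{Im}(\nabla_++\dbar)$; the cohomological vanishing is exactly what replaces the explicit global-coordinate construction of \cite{Matsu} and is the genuinely new ingredient in the general case.

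Two smaller remarks. First, the regularized form the paper constructs actually vanishes on an open neighborhood of $D$, not merely ``to first order along $D$''; this is what lets one avoid the uniform-bound and dominated-convergence issues near deeper strata that you worry about at the end of your step (b). Second, your excision-and-Stokes evaluation is equivalent to what the paper does via the current identity of Proposition~\ref{prop:ResF} and Corollary~\ref{cor:2.5}, but the iterated application of the residue formula has to be interleaved with the iterated limits $h_j\to\mathbf{1}_{D_j}$ in the right order, using $\rest_{P_m}\psi^{P_m}=\Res_{P_m}(\nabla)^{-1}\Res_{P_m}\omega_+$ from Lemma~\ref{lem:Solution}(6); without that lemma your limiting argument has no explicit chain to evaluate.
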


\begin{example}\label{exa:0}
{Let $L$ be an ample line bundle over $X$.
We take a linear system $s_0,\dots,s_N\in\Homo^0(X;L)$ so that $D=\bigcup_{i=0}^N\{ s_i=0\}$ is simple normal crossing and defines a projective embedding $X\to\PP^N$.
We consider a finite dimensional complex vector space $V$ and its endomorphisms $\alpha_i\in {\rm End}_\C(V)$ such that $\sum_{i}\alpha_H=0$.
We write $\alpha_i^+$ for $\alpha_i$ and $\alpha^-_i$ for the transpose of $\alpha_i$.
Let $E$ be the trivial vector bundle $X\times V$ on $X$.
We set $\omega_\pm:=\sum_{i=0}^N\alpha_i^\pm d\log s_i$ and $\nabla_\pm:=d\pm\omega_\pm\wedge:\Omega^0_\pm\rightarrow\Omega_\pm^1$ and assume that it defines an integrable connection, i.e., $\omega_\pm\wedge\omega_\pm=0$.
Here, the precise meaning of $\omega_\pm$ is $\omega_\pm:=\sum_{i=1}^N\alpha_i^\pm d\log (s_i/s_0)\in\Omega_{\log}^1\otimes {\rm End}(E_{\pm})$.
The setting above appears in the work of Riemann-Wirtinger integrals where  $X$ is an abelian variety and $D$ is a theta divisor(\cite{MW},\cite{W}).
}
\end{example}

\begin{example}\label{exa:1}
We discuss a special case of Example \ref{exa:0}.
Let $\mathscr{A}\subset \PP^n$ be a projective hyperplane arrangement.
In the following, we use the standard terminologies of hyperplane arrangements (see e.g., \cite[Chapter 3]{OT}).
We consider a finite dimensional complex vector space $V$ and its endomorphisms $\alpha_H\in {\rm End}_\C(V)$ ($H\in\mathscr{A}$) such that $\sum_{H\in\mathscr{A}}\alpha_H=0$.
%We write $\alpha_H^+$ for $\alpha_H$ and $\alpha^-_H$ for the transpose of $\alpha_H$.
Let $E$ be the trivial vector bundle $\PP^n\times V$ on $\PP^n$.
We write $l_H$ for the defining equation of the hyperplane $H\in \mathscr{A}$.
We set $\omega_\pm:=\sum_{H\in\mathscr{A}}\alpha_H^\pm d\log l_H\in\Omega_{\log}^1\otimes {\rm End}(E_{\pm})$, $\nabla_\pm:=d\pm\omega_\pm\wedge:\Omega^0_\pm\rightarrow\Omega_\pm^1$ and assume that it defines an integrable connection, i.e., $\omega_\pm\wedge\omega_\pm=0$.

Let us construct a standard compactification $X$ of $\PP^n\setminus\mathscr{A}$.
We write $D(\mathscr{A})$ for \mychange{the set} of dense edges.
The compactification $\pi:X\rightarrow\PP^n$ is constructed as a composition of blowing-ups along dense edges in an increasing order of their dimensions.
It can be shown that $D:=\pi^{-1}\mathscr{A}\subset X$ is a simple normal crossing divisor (\cite[10.8]{Var2}, see also \cite[Theorem 4.2.4]{OT}).
Moreover, each dense edge $e$ corresponds to a unique irreducible component $D_e$ of $D$ and we have an irreducible decomposition $D=\cup_{e\in D(\mathscr{A})}D_e$.
When $e$ is in $\mathscr{A}$, the corresponding component $D_e$ is a proper transform of $e$, otherwise $D_e$ is the exceptional divisor generated by the blowing-up along $e$.
By abuse of notation, we write $\nabla_\pm$ for $\pi^*\nabla_\pm$ and $E$ for $\pi^*E$.
For any $e\in D(\mathscr{A})$, we set $\mathscr{A}_e:=\{ H\in\mathscr{A}\mid e\subset H\}$.
The assumption $(!)_+$ in this setting reads 
\begin{equation*}
\text{\underline{$(!)_+$:} For any dense edge $e\in D(\mathscr{A})$, none of eigenvalues of $\sum_{H\in \mathscr{A}_e}\alpha_H$ is in $\Z_{\leq 0}$.}
\end{equation*}

\noindent
We pick a hyperplane in $\mathscr{A}$ and name it $H_\infty$.
Let $A^n_\pm\subset\Homo^0(X;\Omega_\pm^n)$ be the spanning subspace of logarithmic forms \mychange{$\omega(H_1,\dots,H_n):=\pi^*(d\log (l_{H_1}/l_{H_\infty})\wedge\cdots\wedge d\log (l_{H_n}/l_{H_\infty}))$ with $\{ H_1,\dots,H_n\}\subset\mathscr{A}\setminus\{ H_\infty\}$.}
\mychange{The result of} \cite{ESV} (see also \cite{STV}) shows that the canonical morphism $A^n\rightarrow\Homo^n_\pm$ is surjective if the condition $(!)_\pm$ is satisfied. For any $H\in\mathscr{A}\setminus\{ H_\infty\}$ and $e\in D(\mathscr{A})$, we have a formula
\begin{equation}
\mychange{\Res_{H_e}(\pi^*d\log (l_H/l_{H_\infty}))}=
\begin{cases}
1&(H\in\mathscr{A}_e\change{\text{ and }H_\infty\notin\mathscr{A}_e})\\
-1&(H_\infty\in\mathscr{A}_e\change{\text{ and }H\notin\mathscr{A}_e})\\
0&(\change{otherwise})
\end{cases}.
\end{equation}
On the other hand, for any tuple $P=(H_{e_1},\dots,H_{e_n})$ such that $\cap_{i=1}^nH_{e_i}$ is a single point, we have \mychange{$\Res_P(\omega(H_1,\dots,H_n))=\det(\Res_{H_{e_i}}\pi^*d\log (l_{H_j}/l_{H_\infty}))_{i,j=1}^n$.}
The cohomology intersection form $\langle\bullet,\bullet\rangle_{ch}:\Homo_-^n\otimes\Homo^n_+\rightarrow\C$ is determined by these formulae and (\ref{eqn:CIN}).
In particular, when we have a perfect knowledge of the tuples $P=(H_{e_1},\dots,H_{e_n})$ such that $\cap_{i=1}^nH_{e_i}$ is a single point, we can determine the cohomology intersection number in an exact way.
The simplest example is a generic arrangement.
In this case, theorem \ref{thm:1} reproduces the main result of \cite{Matsu}.
Another typical example of such an arrangement is \mychange{Coxeter} arrangement (\cite{AD}).
The formula of cohomology intersection numbers of {\it Parke-Taylor forms} in \cite[Theorem 2.1]{MizeraLong} is indeed a simple adaptation of the discussion above because the configuration space $\mathcal{M}_{0,n}$ is nothing but the complement of a Coxeter arrangement of type A and the compactification $X$ is the Deligne-Knudsen-Mumford compactification $\overline{\mathcal{M}}_{0,n}$ (\cite{Kap}).

%\textcolor{red}{For other types of arrangements too (see [Armstrong-Devadoss], [Physics Guys]).}
\end{example}

\subsection{Stationary phase formula}\label{sec:2.2}
In this section, we consider the case when the vector bundle $E$ is the trivial bundle and a tuple of nowhere vanishing regular functions  $f_1,\dots,f_m$ on the complement $U=X\setminus D$ is given.
We regard $f_i$ as a morphism $f_i:U\rightarrow\C^*$.
Let ${\rm ord}_{D_i}f_j$ denote the vanishing order of $f_j$ along $D_i$.
We assume $U$ is affine and consider the following condition:
\begin{equation*}
\underline{(*):}\text{  \mychange{f}or any }i=1,\dots,N\text{, there is at least one $j$ such that }{\rm ord}_{D_i}f_j\neq 0.
\end{equation*}
For a tuple of complex numbers $\alpha=(\alpha_1,\dots,\alpha_m)$, we set $F_\alpha:=\sum_{i=1}^m\alpha_i\log f_i$ and ${\rm Crit}(F_\alpha):=\{ x\in U\mid dF(x)=0\}$. In view of the argument \cite[\S2.3,2.4]{Huh}, the condition $(*)$ implies that Varchenko's conjecture is true, namely, for a generic choice of $\alpha$\mychange{$\in\C^m$},\footnote{\mychange{Although it is assumed that $\alpha\in\Z^m$ in \cite[\S2.3,2.4]{Huh}, the result of [loc. cit.] holds true for $\alpha\in\C^m$. See also \cite{Sil}.}} one has
\begin{enumerate}
\item ${\rm Crit}(F_\alpha)$ is a 0-dimensional variety;
\item For any $p\in {\rm Crit}(F_\alpha)$, $F_\alpha$ is non-degenerate at $p$; and
\item The cardinality of ${\rm Crit}(F_\alpha)$ is equal to the signed Euler characteristic $(-1)^{n}\chi(U)$.
\end{enumerate}
The condition $(*)$ also implies that the condition (reg) is true for a generic parameter $\alpha$ with respect to a logarithmic connection $\nabla=d+dF_\alpha\wedge:\mathcal{O}_X\rightarrow\Omega_{\log}^1$.
Indeed, if we set $l_i(s):=\sum_{j=1}^m({\rm ord}_{D_i}f_j) s_j$, (reg) is equivalent to the condition $l_i(\alpha)\notin\Z$.
We identify the dual connection $\nabla_-$ with $d-dF_\alpha\wedge:\mathcal{O}_X\rightarrow\Omega_{\log}^1$.
Under the condition (reg), we have a canonical isomorphism $\mathbb{H}^n\left(X;(\Omega^\bullet_X(*D),\nabla_\pm)\right)\simeq\Homo^n_\pm$ in view of the quasi-isomorphism $(\Omega_{\log}^\bullet,\nabla)\simeq (\Omega^\bullet_X(*D),\nabla)$ (\cite{EV}). Therefore we have a canonical morphism $\Homo^0\left(X;\Omega^n_X(*D)\right)\rightarrow\Homo^n_\pm$ which is surjective as $U$ is affine.
A typical example that the condition $(*)$ is satisfied is a smooth very affine variety, i.e., a smooth closed subvariety $U$ of $\T={\rm Spec}\;\C[t_1^{\pm1},\dots,t_m^{\pm1}]$.
The function $f_j$ is the restriction of the coordinate function $t_j:\T\rightarrow\C^*$ to $U$. In \cite[\S2.3, LEMMA 2.3]{Huh}, a compactification $X$ of $U$ was constructed so that it satisfies the condition $(*)$.

Let us consider a family version of this construction.
We fix a parameter $\alpha\in\C^m$ so that the condition
\begin{equation*}
\text{\underline{(generic):} }\ \ \ \ l_i(\alpha)\neq 0\text{ for any }i=1,\dots,N
\end{equation*}
is true.
We introduce a \mychange{scalling} parameter $\tau$ and set $A:=\C[\tau]$.
We write $A_{\rm loc}\supset A$ for the ring obtained by inverting the polynomials $l_i(\alpha)\tau\pm j$ for $j\in\Z$ and write $K$ for the field of convergent Laurent series at infinity $\C[\tau]\{\tau^{-1}\}$.
For any $\C$-algebra $S$, we write $X_S$ for the base change (as a scheme) with respect to the structure morphism $\C\rightarrow S$.
Note that we have identifications $\Homo^k(X;\mathfrak{F}\otimes_{\C}S)\simeq\Homo^k(X;\mathfrak{F})\otimes_{\C}S\simeq\Homo^k(X_S;\mathfrak{F}\otimes_{\C}S)$ for any $k\in\Z$ and for any quasi-coherent sheaf $\mathfrak{F}$ on $X$.
We set $\nabla_{\pm}:=d\pm\tau dF\wedge:\Omega^p_{\log}\otimes_{\C}S\rightarrow\Omega^{p+1}_{\log}\otimes_{\C}S$ for $S=A,A_{\rm loc},K$.
%Note that for any affine open subset $V$ of $X$, we have $\Homo^0(V;\Omega_{\log}^\bullet[\tau])=\Homo^0(V;\Omega_{\log}^\bullet)\otimes_{\C}\C[\tau]=\Homo^0(V(\C[\tau]);\Omega_{\log}^\bullet[\tau])$ where $V(\C[\tau])$ is the base change of $V$ with respect to $\C\subset\C[\tau]$.
%We consider the complex $(\Omega_{\log}^\bullet[\tau],\nabla_{\pm})$ on $X$.
We define the trace map $\mathbb{H}^{2n}(X;(\Omega^\bullet_{\log}(-D)\otimes_{\C} S,d))\simeq \mathbb{H}^{2n}(X;(\Omega^\bullet_{\log}(-D),d))\otimes_{\C}S\rightarrow S$ as a base change of the trace map $\mathbb{H}^{2n}(X;(\Omega^\bullet_{\log}(-D),d))\rightarrow\C$.
We set $\Homo^p_\pm(jD)[\tau]:=\mathbb{H}^p\left(X;(\Omega_{\log}^\bullet(jD)\otimes_{\C}A,\nabla_{\pm})\right)$ for any $j\in\Z$.
We define the cohomology intersection form $\langle\bullet,\bullet\rangle_{ch}:\Homo_-^p[\tau]\otimes_{A}\Homo^{2n-p}_+(-D)[\tau]\rightarrow A$ as a composition of the cup product $\Homo_-^p[\tau]\otimes_{A}\Homo^{2n-p}_+(-D)[\tau]\rightarrow\mathbb{H}^{2n}(X;(\Omega^\bullet_{\log}(-D)\otimes_{\C}A,d))$ and the trace morphism $\mathbb{H}^{2n}(X;(\Omega^\bullet_{\log}(-D)\otimes_{\C}A,d))\rightarrow A$ multiplied by $(2\pi\ii)^n$.
For any $\tau_0\in\C$, we have a natural specialization morphism ${\rm sp}(\tau_0):\Homo^p_\pm(jD)[\tau]\rightarrow\Homo^p_\pm(jD)$ defined by means of a substitution $\tau=\tau_0$.
They fit into a commutative diagram 
\begin{equation}
\xymatrix{
\Homo^p_{\pm}(jD)[\tau]\ar[r] \ar[d]^-{{\rm sp}(\tau_0)} &\Homo^p_\pm((j+1)D)[\tau]\ar[d]^-{{\rm sp}(\tau_0)}\\
\Homo^p_{\pm}(jD)\ar[r] &\Homo^p_\pm((j+1)D).
}
\end{equation}
%We consider a sequence of flat extensions $\C[\tau]\subset\C[\tau]_{\rm loc}\subset \C[\tau]\{\tau^{-1}\}$.
We set $\Homo_\pm^p(jD)[\tau]_{\rm loc}:=\Homo_\pm^p(jD)[\tau]\otimes_{A}A_{\rm loc}$ and $\Homo_\pm^p(jD)[\tau]\{\tau^{-1}\}:=\Homo_\pm^p(jD)[\tau]\otimes_{A}K$.
For any $\tau_0\in\C$ such that $\tau_0l_i(\alpha)\notin\Z$, we have a specialization morphism ${\rm sp}(\tau_0):\Homo_\pm^p(jD)[\tau]_{\rm loc}\rightarrow\Homo_\pm^p(jD)$.
We write $\langle\bullet,\bullet\rangle_{ch}:\Homo^p_{-}[\tau]_{\rm loc}\otimes_{A_{\rm loc}}\Homo^{2n-p}_{+}(-D)[\tau]_{\rm loc}\rightarrow A_{\rm loc}$ and $\langle\bullet,\bullet\rangle_{ch}^{\tau}:\Homo^p_{-}[\tau]\{\tau^{-1}\}\otimes_K\Homo^{2n-p}_{+}(-D)[\tau]\{\tau^{-1}\}\rightarrow K$ for the base change of $\langle\bullet,\bullet\rangle_{ch}$.
In view of the identification $\Homo^{p}_{+}(jD)[\tau]\{\tau^{-1}\}\simeq \mathbb{H}^p(X;(\Omega_{\log}^\bullet(jD)\otimes_{\C}K,\nabla_+))\simeq\mathbb{H}^p(X_K;(\Omega_{\log}^\bullet(jD)\otimes_{\C}K,\nabla_+))$, the proof of \cite[Proposition 50.20.4]{ST} shows that $\langle\bullet,\bullet\rangle_{ch}^{\tau}$ is a perfect pairing.
We have the following commutative diagram:
\begin{equation}\label{diag:2.5}
\xymatrix{
\Homo^p_{-}\otimes_{\C}\Homo^{2n-p}_{+}(-D)\ar[r]^-{\langle\bullet,\bullet\rangle_{ch}}&\C\\
\Homo^p_{-}[\tau]_{\rm loc}\otimes_{A_{\rm loc}}\Homo^{2n-p}_{+}(-D)[\tau]_{\rm loc}\ar[r]^-{\langle\bullet,\bullet\rangle_{ch}} \ar[u]^-{{\rm sp}(\tau_0)} \ar[d]&A_{\rm loc}\ar[u]_-{{\rm sp}(\tau_0)}\ar[d]\\
\Homo^p_{-}[\tau]\{\tau^{-1}\}\otimes_{K}\Homo^{2n-p}_{+}(-D)[\tau]\{\tau^{-1}\}\ar[r]^-{\langle\bullet,\bullet\rangle_{ch}^\tau} &K.
}
\end{equation}
As in the proof of \cite[2.9]{EV}, we see that the canonical morphism $(\Omega^\bullet_{\pm}(jD)\otimes_{\C}S,\nabla_\pm)\rightarrow (\Omega^\bullet_{\pm}((j+1)D)\otimes_{\C}S,\nabla_\pm)$ is a quasi-isomorphism for $S=A_{\rm loc},K$ and for any $j\in\Z$.
Note that $\Res_i(\mathcal{O}_X(jD),\nabla)=l_i(\alpha)\tau\mychange{-}j$ is invertible both in $A_{\rm loc}$ and $K$.
In the following, we use the identification $\Homo^p_{\pm}[\tau]\otimes_{A}S\simeq\mathbb{H}^p(X;(\Omega_X^\bullet(*D)\otimes_{\C}S,\nabla_\pm))=\mathbb{H}^p(U;(\Omega_U^\bullet\otimes_{\C}S,\nabla_\pm))$ for $S=A_{\rm loc},K$.
The last term vanishes when $p>n$ and the perfectness of $\langle\bullet,\bullet\rangle_{ch}^\tau$ shows the \mychange{identity} $\Homo^p_\pm[\tau]\{\tau^{-1}\}=0$ unless $p=n$.
Since $U$ is affine, there is a natural surjective morphism $\Homo^0(U;\Omega^n_{U}\otimes_{\C}S)\simeq \Homo^0(U;\Omega_U^n)\otimes_{\C}S\rightarrow\Homo^n_\pm\otimes_{A}S$.
%Here, $U(\C[\tau]\{\tau^{-1}\})$ is the base change of $U$ with respect to the field extension $\C\subset\C[\tau]\{\tau^{-1}\}$.
In particular, $\Homo^n_\pm[\tau]\otimes_{A}S$ is generated by $\Homo^0(U;\Omega_U^n)$ over $S$.
For any elements $\omega_\pm\in\Homo^0(U;\Omega_U^n)$, we write $[\omega_\pm]$ for \mychange{its image} through the natural morphism $\Homo^0(U;\Omega_U^n)\rightarrow\Homo^n_\pm\otimes_{A}S$.
In view of the commutative diagram (\ref{diag:2.5}), for any $\omega_\pm\in\Homo^0(U;\Omega_U^n)$, the cohomology intersection number $\langle[\omega_-],[\omega_+]\rangle_{ch}^\tau$ belongs to $A_{\rm loc}$ and therefore, reconstructed from values ${\rm sp}(\tau_0)(\langle[\omega_-],[\omega_+]\rangle_{ch})$ for generic $\tau_0$.
By abuse of notation, we will write $\langle\omega_-,\omega_+\rangle_{ch}^\tau$ for $\langle[\omega_-],[\omega_+]\rangle_{ch}^\tau$.

Let us establish a formula for the leading term of the pairing $\langle\bullet,\bullet\rangle_{ch}^\tau$.
For any local coordinate $x=(x_1,\dots,x_n)$ of $U^{an}$, we write $H_{F_\alpha,x}$ for the Hessian determinant $\det (\frac{\partial^2F_\alpha}{\partial x_i\partial x_j})_{i,j=1}^n$.
Any holomorphic $n$-form $\omega$ on $U^{an}$ can be written in the form $\omega=a(x)dx_1\wedge\cdots\wedge dx_n$ locally. We set $\frac{\omega}{dx}:=a(x)$.

\begin{thm}\label{thm:2}
Assume the conditions $(*)$ and {\rm (generic)}. For any $\omega_\pm\in\Homo^0(U;\Omega^n_U)$, one has a Laurent expansion
\begin{equation}\label{eqn:exp}
\frac{\langle\omega_-,\omega_+\rangle_{ch}^{\tau}}{(2\pi\ii)^n}=\tau^{-n}\sum_{k\geq 0}K^{(k)}(\omega_-,\omega_+)\tau^{-k}\ \ (|\tau|\rightarrow+\infty)
\end{equation}
of which the leading term is
\begin{equation}\label{eqn:2.7}
K^{(0)}(\omega_-,\omega_+)=\frac{1}{(2\pi\ii)^n}\sum_{p\in {\rm Crit}(F_\alpha)}\int_{\Gamma_p}\left[\frac{\frac{\omega_+}{dx}\frac{\omega_-}{dx}}{\partial_{x_1}F_\alpha\cdots\partial_{x_n} F_\alpha}\right]dx
\end{equation}
where $x=(x_1,\dots,x_n)$ is a local coordinate on a neighborhood $U_p$ of $p$, $dx=dx_1\wedge\cdots \wedge dx_n$ and $\Gamma_p$ is a cycle given by $\Gamma_p=\{x\in U_p\mid |\partial_{x_1}F_\alpha(x)|=\ve,\dots,|\partial_{x_n}F_\alpha(x)|=\ve\}$ for some small number $\ve>0$ of which the  orientation is given by $d\arg (\partial_{x_1}F_\alpha)\wedge\cdots\wedge d\arg (\partial_{x_n}F_\alpha)>0$.
\end{thm}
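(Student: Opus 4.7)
The plan is to degenerate the family $(\Omega_U^\bullet\otimes K,\nabla_\pm)$ to the Koszul complex $(\Omega_U^\bullet,dF_\alpha\wedge)$ in the large-$\tau$ limit, and to match the leading term of the cohomology intersection pairing with the Grothendieck residue pairing on the Koszul cohomology. Setting $t:=\tau^{-1}$, the grading automorphism acting as multiplication by $t^p$ on $\Omega_U^p$ intertwines $\nabla_\pm$ with $\tilde\nabla_\pm:=t\,d\pm dF_\alpha\wedge$, giving an isomorphism of complexes $(\Omega_U^\bullet\otimes K,\nabla_\pm)\simeq(\Omega_U^\bullet\otimes K,\tilde\nabla_\pm)$ under which a class of $\Homo^n_\pm[\tau]\{\tau^{-1}\}$ represented by a holomorphic $n$-form $\omega_\pm$ is sent to $t^n\omega_\pm$ in the rescaled cohomology.

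Under $(*)$ and (generic), the Koszul complex $(\Omega_U^\bullet(U),dF_\alpha\wedge)$ has cohomology concentrated in degree $n$, equal to the Jacobian space $J(F_\alpha)\cdot dx:=\bigoplus_{p\in\mathrm{Crit}(F_\alpha)}(\Omega^n_U/dF_\alpha\wedge\Omega^{n-1}_U)_p$. Set $\hat R:=\C[[t]]$. The $t$-adic filtration on $(\Omega_U^\bullet(U)\otimes\hat R,\tilde\nabla_\pm)$ has associated graded isomorphic to $(\Omega_U^\bullet(U),dF_\alpha\wedge)\otimes\mathrm{gr}\,\hat R$ whose cohomology sits in form degree $n$; the resulting spectral sequence therefore degenerates at $E_1$, showing that $\Homo^n(\Omega_U^\bullet(U)\otimes\hat R,\tilde\nabla_\pm)$ is a free $\hat R$-module with reduction modulo $t$ equal to $J(F_\alpha)\cdot dx$, and that the cohomology in other degrees vanishes.

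To identify the leading coefficient, one analyzes how the pairing $\langle\bullet,\bullet\rangle_{ch}^\tau$ transports through the rescaling. By the cup-product-trace description, the pairing restricts to an $\hat R$-valued pairing on the distinguished lattice $\Homo^n(\Omega_U^\bullet(U)\otimes\hat R,\tilde\nabla_\pm)$, and its reduction modulo $t$ is a $\C$-bilinear pairing on $J(F_\alpha)\cdot dx$. Because $J(F_\alpha)\cdot dx$ decomposes as a direct sum over $p\in\mathrm{Crit}(F_\alpha)$, and the non-degeneracy of each $p$ makes $\partial_{x_1}F_\alpha,\dots,\partial_{x_n}F_\alpha$ a regular sequence in $\mathcal{O}_{U,p}$, the local factor of this $\mathrm{mod}\,t$ pairing is by definition the Grothendieck residue at $p$. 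The $\phi^n$-scaling by $t^n$ on each of the two factors then produces the leading order $\tau^{-n}$ in (\ref{eqn:exp}), with leading coefficient exactly the residue sum appearing in (\ref{eqn:2.7}).

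The main obstacle is to verify that the cup-product-trace pairing, after transport through $\phi$, has the expected $t^n$ scaling and that its $\mathrm{mod}\,t$ reduction matches the Grothendieck residue pairing rather than some other $\C$-bilinear form on $J(F_\alpha)\cdot dx$. Following Pham's strategy in \cite[\S4]{Pham}, one may construct Dolbeault representatives with $\tau$-dependent compactification near $D$ provided by $\reg_\pm$, integrate, and expand in powers of $t$. An alternative analytic route combines the twisted period relations (\ref{eqn:TPR1}) with classical stationary-phase asymptotics of each period integral along a basis of Lefschetz thimbles anchored at the critical points of $F_\alpha$; either route reduces the verification to the standard fact from singularity theory that the local residue pairing at a Morse critical point $p$ evaluates as the Hessian-weighted product $(\omega_+/dx)(p)(\omega_-/dx)(p)/H_{F_\alpha,x}(p)$.
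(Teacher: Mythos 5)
Your high-level plan is on the right track—both the algebraic route (Brieskorn-lattice style degeneration to the Koszul complex) and the analytic route (twisted period relations plus stationary phase along Lefschetz thimbles) are legitimate strategies, and the paper does take the second one. However, there is a genuine gap that you yourself flag as the ``main obstacle'' but then do not close: in the analytic route it is \emph{not} sufficient to invoke TPR and stationary phase along a basis of Lefschetz thimbles, because the stable/unstable manifolds $\Gamma^\pm_p$ of the Morse flow for $\Re F_\alpha$ are noncompact (they accumulate at $D_0$ and $D_\infty$), hence a priori only define classes in $\Homo^{\rm lf}_n(U;\mathcal{L}^\pm)$. The twisted period relation (\ref{eqn:TPR1}) requires an honest compactly supported cycle on one side together with a dual basis of the \emph{regularized} homology group, and constructing those regularizations is precisely where the hypotheses $(*)$ and (generic) bite. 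The paper's proof spends most of its effort on this: it reduces to $\alpha\in\Z^m$, blows up $X$ so that $e^{F_\alpha}$ extends to a morphism $X\to\PP^1$ with zero/pole divisors $D_0$, $D_\infty$, proves the tubular-neighborhood vanishing $\Homo^{\rm lf}_p(W_\infty\setminus D;\mathcal{L}^-)=0$ via the real oriented blow-up (Lemma~\ref{lem:tubular}), and uses this vanishing to surgically replace the unbounded tail of each thimble by a finite chain $C$, producing $\widetilde\Gamma^\pm_p$ with $\reg[\Gamma^\pm_p]=[\widetilde\Gamma^\pm_p]$. It then invokes Smale transversality to get $\langle[\widetilde\Gamma^-_p],[\widetilde\Gamma^+_q]\rangle_h=\delta_{pq}$ and matches $|{\rm Crit}(F_\alpha)|$ with $\dim\Homo_n$ to conclude that the regularized thimbles form a dual basis; only then does the twisted period relation apply and the classical stationary-phase asymptotics deliver the Hessian-weighted leading term.

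Your algebraic route also leaves the decisive step (mod-$t$ reduction of the cup-product-trace pairing is the Grothendieck residue pairing, not some other bilinear form on $J(F_\alpha)\cdot dx$) at the level of ``following Pham's strategy'' without carrying it out; this is exactly the content that must be supplied, and the paper chooses the analytic route rather than a direct Brieskorn-lattice computation to avoid it. In short: the strategy you sketch is correct, but the proposal defers the one technical step that the paper's proof is actually about (regularization of the thimbles and identification of the dual basis), so as written it is a plan rather than a proof.
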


\begin{rem}
\leavevmode\newline
\begin{enumerate}
\item
(\ref{eqn:2.7}) is known as Grothendieck residue (cf. \cite[Chapter 5]{GH}).
In view of the {\it transformation law} of Grothendieck residue (p657 of loc. cit.), we see that each term of the sum (\ref{eqn:2.7}) is independent of the choice of the local coordinate $x$.
\item
When $\alpha$ is generic so that the Varchenko's conjecture is true, the leading term of (\ref{eqn:exp}) is given by the stationary phase formula
\begin{equation}\label{eqn:leading}
K^{(0)}(\omega_-,\omega_+)=\sum_{p\in{\rm Crit}(F_\alpha)}\frac{1}{H_{F_\alpha,x}(p)}\frac{\omega_+}{dx}(p)\frac{\omega_-}{dx}(p).
\end{equation}
\item
Let us consider regular functions $f_1,\dots,f_m:(\C^*)^n={\rm Spec}\;\C[x_1^\pm,\dots,x_n^\pm]\rightarrow\C$.
If $U=\{ x\in(\C^*)^n\mid f_i(x)\neq0,\ i=1,\dots,m\}$, (\ref{eqn:leading}) can be written as 
\begin{equation}
K^{(0)}(\omega_-,\omega_+)=\sum_{p\in{\rm Crit}(F_\alpha)}\frac{1}{H_{F_\alpha,x}^{\rm toric}(p)}\frac{\omega_+}{\omega_0}(p)\frac{\omega_-}{\omega_0}(p),
\end{equation}
where $\omega_0=\frac{dx_1}{x_1}\wedge\cdots\wedge\frac{dx_n}{x_n}$ and $H_{F_\alpha,x}^{\rm toric}(p)$ is the toric Hessian $\det (\theta_i\theta_jF(p))_{i,j}$.
Here, $\theta_i$ denotes the $i$-th Euler operator $\theta_i=x_i\frac{\partial}{\partial x_i}$.
\item
%Under the assumption $(*)$, (\ref{eqn:exp}) and (\ref{eqn:2.7}) are true for any $\alpha\in\C^m$ such that $({\rm generic})$ is true.
When $X,D,f_1,\dots,f_m$ are defined over a subfield $k\subset\C$, \mychange{both} $\frac{\langle\omega_-,\omega_+\rangle_{ch}^{\tau}}{(2\pi\ii)^n}$ and each term $K^{(k)}(\omega_-,\omega_+)$ belongs to $k(\alpha)$. 
\item
The formula (\ref{eqn:2.7}) is valid when ${\rm Crit}(F_\alpha)$ is discrete and the sum $\sum_{p\in{\rm Crit}(F_\alpha)}\mu_p$ is equal to $(-1)^n\chi(U^{an})$.
Here, $\mu_p$ is the Milnor number of $F_\alpha$ at $p$ defined by $\mu_p:=\dim_{\C}\mathcal{O}^{an}_{U,p}/\langle \partial_{x_1}F_\alpha,\dots,\partial_{x_n} F_\alpha\rangle$.
This condition is valid if the parameter $\alpha$ is chosen so that the condition (generic) is true.
This is because {\it logarithmic Gau\ss-Bonnet theorem} is true in this setting (\cite[Remark 1.4 and \S4.2]{Sil} see also \cite[\S2.3,2.4]{Huh}).
Note that $\{ x\in X\mid dF(x)=0\}\cap U={\rm Crit}(F_\alpha)$.
%\item
%We expect that $K^{(k)}(\bullet,\bullet)$ gives rise to the higher residue pairing in the sense of \cite{KSaito} after a suitable microlocalization. Although this aspect is not discussed here, interested readers may refer to \cite{MP}. 
\end{enumerate}
\end{rem}

%If $\tau=e^{\ii \theta}\tilde{\tau}$ ($\tilde{\tau}\in\R$), we can replace $\alpha_j$ by $e^{\ii\theta}\alpha_j$ and $\tau$ by $\tilde{\tau}$. Therefore, we may assume $\tau$ is real. \textcolor{red}{In this manner, we may assume that $\Re F(p)\neq\Re F(q)$ for any distinct $p,q\in{\rm Crit}(F)$.} 
We can recover the result of S. Mizera in its full generality (\cite{M}).

\begin{cor}\label{cor:2.6}
Assume the condition $(*)$. For any $\alpha\in\C^m$ such that $l_i(\alpha)\notin\Z$ ($i=1,\dots,N$) and $\omega_\pm\in\Homo^0(X;\Omega_{\log}^n)$, one has
\begin{equation}\label{eqn:2.6}
\frac{\langle\omega_-,\omega_+\rangle_{ch}}{(2\pi\ii)^n}=K^{(0)}(\omega_-,\omega_+).
\end{equation}
\end{cor}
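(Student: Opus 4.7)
The plan is to combine Theorem \ref{thm:1} (residue formula) with Theorem \ref{thm:2} (stationary phase formula), exploiting the rigidity of logarithmic forms under the residue formula. The hypothesis $l_i(\alpha)\notin\Z$ is strictly stronger than both $(!)_\pm$ and the condition (generic) of \S\ref{sec:2.2} (which is just $l_i(\alpha)\neq 0$), so both theorems are applicable.

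First I would work in the $\tau$-family over $A_{\rm loc}$ with connection $\nabla_{\pm,\tau}=d\pm\tau\,dF_\alpha\wedge$ and apply Theorem \ref{thm:1} (extended to this family by base change) at generic $\tau$, where $\tau\,l_i(\alpha)\notin\Z_{\le 0}$ for every $i$. Since $E$ is the trivial line bundle, the residue $\Res_i(\nabla_\tau)=\tau\,l_i(\alpha)$ is the scalar $\tau\,l_i(\alpha)$, so
\begin{equation*}
\Res_{P_n}(\nabla_\tau)^{-1}=\tau^{-n}\prod_{j=1}^n l_{\mu(j)}(\alpha)^{-1}.
\end{equation*}
On the other hand, the iterated residues $\Res_{P_n}(\omega_\pm)$ of the logarithmic forms $\omega_\pm$ are evaluations at the finitely many points of $D(P_n)$ and do not depend on $\tau$. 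Substituting into the residue formula yields an exact identity
\begin{equation*}
\frac{\langle\omega_-,\omega_+\rangle_{ch}^{\tau}}{(2\pi\ii)^n}=\tau^{-n}\,C(\alpha),
\end{equation*}
where $C(\alpha)$ is a rational function of $\alpha$ that is independent of $\tau$. Specializing $\tau=1$ through the commutative diagram (2.5) of \S\ref{sec:2.2} then gives $\frac{\langle\omega_-,\omega_+\rangle_{ch}}{(2\pi\ii)^n}=C(\alpha)$.

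Next, Theorem \ref{thm:2} asserts that the same quantity admits an asymptotic Laurent expansion $\tau^{-n}\sum_{k\ge 0}K^{(k)}(\omega_-,\omega_+)\tau^{-k}$ as $|\tau|\to\infty$. Comparing this with the exact expression $\tau^{-n}C(\alpha)$ obtained above, uniqueness of Laurent expansions at infinity forces $K^{(0)}(\omega_-,\omega_+)=C(\alpha)$, together with the vanishing $K^{(k)}(\omega_-,\omega_+)=0$ for all $k\ge 1$ (a rigidity bonus for logarithmic forms). Substituting this identification back into the $\tau=1$ specialization yields $\frac{\langle\omega_-,\omega_+\rangle_{ch}}{(2\pi\ii)^n}=K^{(0)}(\omega_-,\omega_+)$, as claimed.

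The conceptual heart of the argument is the rigidity statement that logarithmic forms produce an \emph{exact} $\tau^{-n}$-scaling in the residue formula, whereas general $\omega_\pm\in\Homo^0(U;\Omega_U^n)$ only yield a full asymptotic series; this is what forces the leading-term Grothendieck residue to equal the cohomology intersection number itself in this case. The main technical point to verify is that Theorem \ref{thm:1}, proved in \S\ref{sec:33}, extends to the $A_{\rm loc}$-linear family-version pairing of \S\ref{sec:2.2}. Since both sides of the residue formula are rational functions of $\tau$ and the identity holds at each generic $\tau$ where $(!)_+$ applies to $\nabla_\tau$, this compatibility is a routine base-change check rather than a serious obstacle.
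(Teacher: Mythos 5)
Your proposal is correct and follows essentially the same route as the paper: the paper's proof likewise invokes Theorem~\ref{thm:1} to observe that $\langle\omega_-,\omega_+\rangle_{ch}^\tau$ is homogeneous of degree $-n$ in $\tau$ (precisely because $\Res_{P_n}(\nabla_\tau)^{-1}$ scales as $\tau^{-n}$ while the iterated residues of logarithmic forms are $\tau$-independent), and then concludes from Theorem~\ref{thm:2} that all higher pairings $K^{(k)}$ with $k\geq 1$ vanish. You have simply spelled out in more detail the scalar computation $\Res_i(\nabla_\tau)=\tau\,l_i(\alpha)$ and the specialization at $\tau=1$ that the paper leaves implicit.
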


\begin{proof}
Theorem \ref{thm:1} shows that $\langle\omega_-,\omega_+\rangle_{ch}^\tau$ is homogeneous of order $-n$ with respect to $\tau$. This implies that higher pairings $K^{(k)}(\omega_-,\omega_+)$ ($k\geq 1$) all vanish.
\end{proof}

\begin{example}
We inherit the notation of Example \ref{exa:1}.
Let us consider the special case when $V$ is a one-dimensional complex vector space.
We may regard each $\alpha_H$ as a scalar.
The condition $(*)$ is true for the compactification $X$ and the condition $(generic)$ is precisely the condition 
\begin{equation*}
\text{\underline{$(generic)$:} For any dense edge $e\in D(\mathscr{A})$, none of $\sum_{H\in \mathscr{A}_e}\alpha_H$ is in $\Z$.}
\end{equation*}
Now the result \cite{ESV} (see also \cite{STV}) shows that the natural morphism $(A^\bullet,\omega_\pm\wedge)\rightarrow (\Omega_U^\bullet(U),\nabla_\pm)$ is a quasi-isomorphism.
Corollary \ref{cor:2.6} shows that the following diagram is commutative, which was first discovered in \cite{M}:
\begin{equation}
\xymatrix@C10em{
\Homo^n_-\otimes\Homo^n_+\ar[r]^-{\langle\bullet,\bullet\rangle_{ch}}&\C\\
\Homo^n(A^\bullet,\omega_-\wedge)\otimes\Homo^n(A^\bullet,\omega_+\wedge)\ar[u]^-{\simeq}\ar[r]^-{K^{(0)}(\bullet,\bullet)}&\C.\ar[u]_-{=}
}
\end{equation}
\end{example}

%A more intrinsic formulation of the condition (generic) is the notion of {\it critical slopes}. In the theorems above, we begin with a smooth projective variety $X$, but it is often the case that the ambient variety is a smooth affine variety $U$. The author expects that the consideration above can be proved for a parameter $\alpha$ which does not lie in the critical slope $S_F$ when the variety $U$ is sch\"on.

% and set $F:=\sum_{j=1}^m\alpha_j\log f_j$ and regard it as a multi-valued function on $U$. By \cite[Theorem 1]{Huh}, all the critical points of $F$ are non-degenerate and the total number of them is equal to the signed Euler characteristic $(-1)^{\dim U}\chi(U)$ when $\alpha_j$ are generic. We write ${\rm Crit}(F)$ for the set of the critical points of $F$, i.e., the set of points of $U$ at which the 1-form $dF$ vanishes. The last number is equal to $\dim \Homo_+^n$ in view of the isomorphism $\Homo^n_+\simeq \Homo^n(U;\mathcal{L}^-)$ and the purity $\Homo^k(U;\mathcal{L}^-)=0$ ($k\neq n$). 

\noindent
Another corollary of Theorem \ref{thm:2} is the following criterion of finding a basis of the cohomology group. We write $\Homo_\pm^n(\alpha)$ for $\Homo^n_\pm$ to emphasize its dependence on $\alpha$.

\begin{cor}\label{cor:2.7}
Assume the conditions $(*)$ and ${\rm (generic)}$. For any $\C$-basis $[\omega_1],\dots,[\omega_r]\in\Homo^n\left(U;(\Omega^\bullet_U,dF\wedge)\right)$, the corresponding cohomology classes $\{[\omega_i]\}_i\subset\Homo_\pm^n(\tau\alpha)$ form a basis if $\tau$ is chosen generically. 
\end{cor}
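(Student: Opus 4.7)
\textbf{Proof proposal for Corollary \ref{cor:2.7}.}

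The plan is to deduce the corollary directly from Theorem \ref{thm:2} by reducing basishood to the non-vanishing of a single determinant that is computable from Grothendieck residue. First I would observe that both spaces have the same dimension $r=(-1)^n\chi(U^{an})$: on the Koszul side this is the total Milnor number $\sum_{p\in{\rm Crit}(F_\alpha)}\mu_p$, which equals $(-1)^n\chi(U^{an})$ under $(*)$ and (generic) by the logarithmic Gau\ss-Bonnet reasoning already quoted in Remark following Theorem \ref{thm:2}; on the de Rham side the same Euler characteristic formula applies to $\Homo^n_\pm(\tau\alpha)$ once $\tau\alpha$ satisfies (generic), which holds outside the countable set $\{\tau\mid \tau l_i(\alpha)\in\Z\text{ for some }i\}$. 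Since dimensions agree, it suffices to show that the classes $[\omega_i]$ are linearly independent in $\Homo_\pm^n(\tau\alpha)$ for $\tau$ outside a further discrete set.

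For this, I would exploit the perfect pairing $\langle\bullet,\bullet\rangle_{ch}^\tau$ over $K=\C[\tau]\{\tau^{-1}\}$. The key input is that the Grothendieck residue pairing
\[
K^{(0)}:\Homo^n(U;(\Omega_U^\bullet,dF\wedge))\otimes_{\C}\Homo^n(U;(\Omega_U^\bullet,dF\wedge))\longrightarrow\C
\]
is itself non-degenerate: by the local decomposition
\[
\Homo^n(U;(\Omega_U^\bullet,dF\wedge))\simeq\bigoplus_{p\in{\rm Crit}(F_\alpha)}\mathcal{O}^{an}_{U,p}/\langle\partial_{x_1}F_\alpha,\dots,\partial_{x_n}F_\alpha\rangle\cdot dx,
\]
the pairing $K^{(0)}$ splits as a direct sum of local Grothendieck residue pairings on Milnor algebras, each of which is perfect by the classical local duality theorem (see e.g.\ \cite[Chapter~5]{GH}). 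Hence I can choose a basis $\{[\eta_1],\dots,[\eta_r]\}$ of the Koszul cohomology that is $K^{(0)}$-dual to $\{[\omega_1],\dots,[\omega_r]\}$, i.e.\ $K^{(0)}(\omega_i,\eta_j)=\delta_{ij}$. Applying Theorem \ref{thm:2} to each pair gives the Laurent expansion
\[
\frac{\langle\omega_i,\eta_j\rangle_{ch}^\tau}{(2\pi\ii)^n}=\tau^{-n}\bigl(\delta_{ij}+O(\tau^{-1})\bigr),\qquad|\tau|\to+\infty,
\]
so the determinant of the $r\times r$ intersection matrix is $\tau^{-nr}(2\pi\ii)^{nr}(1+O(\tau^{-1}))$, which is a nonzero element of $K$ and therefore, by clearing denominators, becomes a unit of $A_{\rm loc}$ after localization. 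Consequently it is invertible under the specialization ${\rm sp}(\tau_0)$ for all $\tau_0$ outside a discrete set.

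Fixing such a generic $\tau_0$ and assuming a linear relation $\sum_i c_i[\omega_i]=0$ in $\Homo^n_+(\tau_0\alpha)$, pairing with $[\eta_j]$ gives $\sum_i c_i\langle\omega_i,\eta_j\rangle_{ch}=0$ for every $j$; invertibility of the specialized matrix forces all $c_i=0$, and similarly for $\Homo^n_-(\tau_0\alpha)$ (pairing on the other side). Combined with the dimension count this yields the desired basis statement. The main obstacle I anticipate is conceptual rather than computational: one must verify that the dual basis $\{[\eta_j]\}$ produced from the abstract perfect pairing can indeed be represented by global algebraic $n$-forms on $U$, so that the family pairing $\langle\omega_i,\eta_j\rangle_{ch}^\tau$ makes sense and Theorem \ref{thm:2} is applicable. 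This is handled by the surjectivity of $\Homo^0(U;\Omega_U^n)\otimes_{\C}S\twoheadrightarrow\Homo_\pm^n[\tau]\otimes_A S$ noted in \S\ref{sec:2.2}, which allows representatives to be chosen algebraically, and by the observation that the local decomposition of the Koszul cohomology can likewise be realized by global algebraic forms since $U$ is affine.
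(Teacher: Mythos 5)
Your proof is correct and follows essentially the same route as the paper: both rest on Theorem \ref{thm:2} (the leading coefficient of the intersection pairing is Grothendieck residue), on the perfectness of $K^{(0)}$ via local duality, and on specialization through the commutative diagram (\ref{diag:2.5}). The one deviation is your introduction of a $K^{(0)}$-dual basis $\{\eta_j\}$; the paper instead directly observes that $\det\left(K^{(0)}(\omega_i,\omega_j)\right)_{i,j=1}^r\neq 0$ because $K^{(0)}$ is a perfect pairing and $\{[\omega_i]\}$ is a basis, so the Gram matrix with respect to any basis automatically has nonzero determinant. This sidesteps the extra step entirely, and in particular the obstacle you anticipate — whether the $\eta_j$ admit global algebraic representatives — never arises; it is in any case a non-issue, since $U$ is affine so $\Homo^n\left(U;(\Omega^\bullet_U,dF\wedge)\right)$ is computed by the complex of global sections, and every class is tautologically represented by a global algebraic $n$-form (the same reason the $\omega_i$ themselves are). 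You do explicitly record the dimension count $\dim\Homo^n\left(U;(\Omega^\bullet_U,dF\wedge)\right)=(-1)^n\chi(U^{an})=\dim\Homo_\pm^n(\tau\alpha)$ (for generic $\tau$), which the paper leaves implicit but which is genuinely needed to promote linear independence to a basis; that is a useful point to make explicit.
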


\begin{proof}
The leading term $K^{(0)}(\bullet,\bullet):\Homo^n\left(U;(\Omega^\bullet_U,dF\wedge)\right)\otimes_{\C}\Homo^n\left(U;(\Omega^\bullet_U,dF\wedge)\right)\rightarrow\C$ defines a perfect pairing (Grothendieck duality, \cite[p659, p707]{GH}).
This proves that the determinant $\det \left( K^{(0)}(\omega_i,\omega_j)\right)_{i,j=1}^r$ is non-vanishing.
We write $[\omega^\pm_i]$ for $[\omega_i]\in\Homo_\pm^n[\tau]\{\tau^{-1}\}$ to emphasize the signature.
Then, theorem \ref{thm:2} shows that the leading term of the determinant $\det \left( \langle\omega_i^-,\omega_j^+\rangle_{ch}^{\tau}\right)_{i,j=1}^r$ is $\tau^{-nr}\det \left( K^{(0)}(\omega_i,\omega_j)\right)_{i,j=1}^r\neq 0$, hence the assertion is true in view of the commutative diagram (\ref{diag:2.5}).
\end{proof}

We conclude this section with a simple observation on the difference structure of algebraic de Rham cohomology groups.
Let us assume $\alpha\in\Z^m$.
$\Homo^{n}_\pm[\tau]\{\tau^{-1}\}$ is naturally equipped with a left action of a ring of difference operators.
We use the identification $\Homo^{n}_\pm[\tau]\{\tau^{-1}\}\simeq\mathbb{H}^n\left( U;(\Omega^\bullet_U\otimes_{\C}K,\nabla_{\pm})\right)$.
Let $R$ be a non-commutative $\C$-algebra generated by elements $\tau,\s$ with a relation $\s\tau=(\tau+1)\s$.
Note that $A$ and $K$ admit an action of the difference operator $\s:\tau\mapsto\tau+1$ so that $\s(\tau^{-m})=\tau^{-m}\sum_{k=0}^\infty(-1)^k\frac{(m)_k}{k!}\tau^{-k}$.
Here, we set $(m)_k:=m(m+1)\cdots (m+k-1)$.
We set $R\{\tau^{-1}\}:=K\otimes_{A}R$ which is again a $\C$-algebra.
An action of $\s$ on $\Omega^n_{U}\otimes_{\C}K$ is given by $\s\bullet_\pm(a(\tau)\omega)=f^{\pm\alpha}a(\tau+1)\omega$ wher $a(\tau)\in K$ and $\omega$ is a local section of $\Omega_{U}^n$.
It is easy to see the relations $\s\bullet_+(\nabla_+\omega(\tau))=\nabla_+(\s\bullet_+\omega(\tau))$ and $\s\bullet_-(\nabla_-\omega(\tau))=\nabla_-(\s\bullet_-\omega(\tau))$ hold for any local section $\omega(\tau)\in \Omega_{U}^n\otimes_{\C}K$.
For any $R\{\tau^{-1}\}$-modules $M_1,M_2$, we can define an $R\{\tau^{-1}\}$-module structure on $M_1\otimes_{K}M_2$ by $\s(m_1\otimes m_2)=\s m_1\otimes\s m_2$.
By the definition of $\langle\bullet,\bullet\rangle_{ch}^\tau$, it is straightforward to prove the following
\begin{prop}
\mychange{Suppose that $\alpha\in\Z^m$.}
Then, $\langle\bullet,\bullet\rangle_{ch}^\tau:\Homo^n_{-}[\tau]\{\tau^{-1}\}\otimes_{K}\Homo^{n}_{+}[\tau]\{\tau^{-1}\}\rightarrow K$ is a perfect pairing of $R\{\tau^{-1}\}$-modules.
\end{prop}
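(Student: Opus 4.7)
The $K$-bilinear perfectness of $\langle\bullet,\bullet\rangle_{ch}^\tau$ has already been established via the base change argument following \cite[Proposition 50.20.4]{ST}, so the only new content is $R\{\tau^{-1}\}$-equivariance. Given the $K$-bilinearity and the diagonal action $\s(m_1\otimes m_2)=\s m_1\otimes \s m_2$, it suffices to verify the identity
\begin{equation*}
\s\cdot\langle m_-, m_+\rangle_{ch}^\tau=\langle \s\bullet_- m_-,\s\bullet_+ m_+\rangle_{ch}^\tau,
\end{equation*}
where $\s$ on the left-hand side acts on $K$ by $\tau\mapsto\tau+1$. Since $\s\bullet_\pm$ commute with $\nabla_\pm$ by the computation recalled just before the statement, these operators descend to hypercohomology. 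The plan is then to check the identity at the level of representatives; the surjection $\Homo^0(U;\Omega^n_U)\otimes_\C K\twoheadrightarrow\Homo^n_\pm[\tau]\{\tau^{-1}\}$ already observed in the excerpt reduces the problem to classes represented by $a(\tau)\omega_-$ and $b(\tau)\omega_+$, with $\omega_\pm\in\Homo^0(U;\Omega^n_U)$ being $\tau$-independent and $a(\tau),b(\tau)\in K$.

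The crux is the cancellation
\begin{equation*}
f^{-\alpha}\cdot f^\alpha=1,
\end{equation*}
which is legitimate precisely because $\alpha\in\Z^m$ makes both $f^{\pm\alpha}:=\prod_{i=1}^mf_i^{\pm\alpha_i}$ well-defined nowhere-vanishing rational functions on $U$. From this,
\begin{equation*}
\s\bullet_-(a(\tau)\omega_-)\wedge \s\bullet_+(b(\tau)\omega_+)=f^{-\alpha}f^\alpha\,a(\tau+1)b(\tau+1)\,\omega_-\wedge\omega_+=(ab)(\tau+1)\,\omega_-\wedge\omega_+,
\end{equation*}
which is exactly the operator $\s$ applied, through $\tau\mapsto\tau+1$, to the original wedge product representative $a(\tau)b(\tau)\,\omega_-\wedge\omega_+$. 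The trace $\mathbb{H}^{2n}(X;(\Omega^\bullet_{\log}(-D)\otimes_\C K,d))\to K$ is, by its very construction, the $K$-linear base change of the $\C$-valued trace on $\mathbb{H}^{2n}(X;(\Omega^\bullet_{\log}(-D),d))$, and therefore commutes with $\s$ acting on the $K$-factor. Applying it and multiplying by $(2\pi\ii)^n$ gives the desired intertwining identity.

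No genuine obstacle arises: the whole argument reduces to the integrality cancellation $f^{-\alpha}f^\alpha=1$ (the one place where $\alpha\in\Z^m$ is essential) together with the base-change functoriality of the trace map. Once the $\s$-equivariance is in hand, perfectness as a pairing of $R\{\tau^{-1}\}$-modules is just the combination of the already established $K$-perfectness with this equivariance, and the proposition follows.
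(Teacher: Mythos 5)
Your strategy is the right one: reduce to verifying the intertwining $\s\cdot\langle m_-,m_+\rangle_{ch}^\tau=\langle\s\bullet_-m_-,\s\bullet_+m_+\rangle_{ch}^\tau$, use the surjection from $\Homo^0(U;\Omega^n_U)\otimes_\C K$ to pass to representatives, and observe that the integrality hypothesis $\alpha\in\Z^m$ is exactly what makes $f^{\pm\alpha}$ honest regular functions on $U$ so that the factors $f^{-\alpha}$ and $f^{\alpha}$ cancel. But the key step as you write it is vacuous: for $\omega_\pm\in\Homo^0(U;\Omega^n_U)$ the wedge product $\omega_-\wedge\omega_+$ is identically zero, since $U$ has dimension $n$, so the chain $(ab)(\tau+1)\,\omega_-\wedge\omega_+$ does not represent the cup product class. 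The actual pairing $\langle\bullet,\bullet\rangle_{ch}^\tau$ is the cup product followed by the trace on $X$, computed in a hypercohomology model (Dolbeault or \v{C}ech) where the nontrivial contributions come from cross-bidegree wedges of a regularization (this is exactly what \S\ref{sec:33} is managing). The operator $\s\bullet_\pm$ is defined only on $\Omega^\bullet_U\otimes_\C K$, and since $f^{\pm\alpha}$ has zeros and poles along $D$ it does not lift to a chain-level automorphism of $(\Omega^\bullet_{\log}(jD)\otimes_\C K,\nabla_\pm)$ on $X$; so the chain-level cancellation you invoke is not available on the complex where the cup product and trace actually live.

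To close the gap you need one additional idea beyond $f^{-\alpha}f^\alpha=1$. The shortest route is the twisted period relation used in \S\ref{sec:3}: because $\alpha\in\Z^m$, the sheaves $\mathcal{L}^\pm(\tau_0\alpha)$ and $\mathcal{L}^\pm((\tau_0+1)\alpha)$ coincide (they arise as $\C(\tau_0\alpha)\otimes_{\C[T^{\pm1}]}\mathcal{L}^\pm$ and $\C(\tau_0\alpha)=\C((\tau_0+1)\alpha)$ when $\alpha\in\Z^m$), so the twisted cycles $\Gamma_i^\pm$ and the homology intersection matrix $(\langle\Gamma_i^-,\Gamma_j^+\rangle_h)$ are unchanged under $\tau\mapsto\tau+1$, while the periods transform by $\int_\Gamma f^{\pm(\tau+1)\alpha}\omega_\pm=\int_\Gamma f^{\pm\tau\alpha}(f^{\pm\alpha}\omega_\pm)$. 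Plugging this into the twisted period relation (\ref{eqn:TPR1}) gives the sought $\s$-equivariance directly. Equivalently, one may argue by specialization: it suffices to check, at generic $\tau_0\in\C$, that $\langle[f^{-\alpha}\omega_-],[f^\alpha\omega_+]\rangle_{ch}$ in $\Homo^n_\pm(\tau_0\alpha)$ equals $\langle[\omega_-],[\omega_+]\rangle_{ch}$ in $\Homo^n_\pm((\tau_0+1)\alpha)$, and this follows because multiplication by $f^{\mp\alpha}$ is an isomorphism $(\Omega^\bullet_U,d\mp(\tau_0+1)dF)\to(\Omega^\bullet_U,d\mp\tau_0 dF)$ of dual pairs of complexes preserving the duality bracket. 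Either version is short, but neither is literally the ``wedge of two global $n$-forms'' calculation you wrote; that is the place where the argument genuinely breaks and must be replaced.
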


\begin{rem}
For any $\omega_\pm\in\Homo^0(U;\Omega^n_U)$, we have a compatibility relation
\begin{equation}
K^{(k)}(f^{-\alpha}\omega_-,f^{\alpha}\omega_+)=\sum_{l=0}^kK^{(l)}(\omega_-,\omega_+)\frac{(-1)^{k-l}(n+l)_{k-l}}{(k-l)!}.
\end{equation}
\end{rem}

\subsection{Amplitude is a cohomology intersection number}\label{sec:4}

In this subsection, we relate results of \cite{AH} and \cite{StuT} to cohomology intersection numbers.
We consider Laurent polynomials $q_j$ ($j=1,\dots,e$) in variables $x=(x_1,\dots,x_n)$ with positive coefficients.
Let ${\rm New}(q_j)$ denote the convex hull of the exponent vectors of monomials in $q_j$.
We assume that the Minkowski sum $\sum_{j=1}^e{\rm New}(q_j)$ is an $n$-dimensional polytope.
Let us consider real parameters $u_i$ and $v_j\change{>0}$ and set
\begin{equation}\label{eqn:amp}
{\rm amplitude}:=\lim_{\ve\rightarrow+0}\ve^n\int_{\R^n_{\geq 0}}(x_1^{u_1}\cdots x_n^{u_n}q_1(x)^{-v_1}\cdots q_e(x)^{-v_e})^\ve\omega_0.
\end{equation}
Here, we set $\omega_0=\frac{dx_1}{x_1}\wedge\cdots\wedge\frac{dx_n}{x_n}$ .
%We write $\langle\bullet,\bullet\rangle_{ch}:\Homo^n_-\otimes\Homo^n_+\rightarrow\C$ for the cohomology intersection pairing. 

We provide a combinatorial formula for the amplitude in terms of regular triangulations.
Let us recall basic notation from \cite{GMH}.
We write $A_j$ ($j=1,\dots,e$) for the matrix obtained by aligning column vectors that appear as exponents of monomials in $q_j$.
We consider the Cayley configuration with $e$-partitions:
\begin{equation}\label{CayleyConfigu}
A
=
\left(
\begin{array}{ccc|ccc|c|ccc}
1&\cdots&1&0&\cdots&0&\cdots&0&\cdots&0\\
\hline
0&\cdots&0&1&\cdots&1&\cdots&0&\cdots&0\\
\hline
 &\vdots& & &\vdots& &\ddots& &\vdots& \\
\hline
0&\cdots&0&0&\cdots&0&\cdots&1&\cdots&1\\
\hline
 &A_1& & &A_2& &\cdots & &A_e& 
\end{array}
\right).
\end{equation}
Let $L_A$ be its kernel lattice, i.e., the kernel of the left multiplication $A\times:\Z^{N}\rightarrow\Z^{n+e}$ where $N$ is the number of columns of $A$.
By a row transformation, we may assume that the first column vector of $A_j$ is a zero vector.
We write $A_j=({\bf 0}|\tilde{A}_j)$ and write $\Q \tilde{A}_j$ for the $\Q$-vector subspace of $\Q^n$ spanned by the column vectors of $\tilde{A}_j$.
By our assumption on $q_j$, we have $\sum_{j=1}^e\Q \tilde{A}_j=\Q^{n}$.
In this case, we can find an $n\times n$ $\Z$-matrix $Q$ such that $\sum_{j=1}^e\Z \tilde{A}_j=\Z Q$.
We can apply the change of variables $y=x^{Q^{-1}}$ to obtain an integral with the associated Cayley configuration $\tilde{A}$ in which we replace $A_j$ by $({\bf 0}|Q^{-1}\tilde{A}_j)$.
This configuration satisfies $\Z\tilde{A}=\Z^{n+e}$.
In order to avoid cumbersome notation, we simply assume $\Z A=\Z^{n+e}$. We set $\delta:={}^t(v_1,\dots,v_e,u_1,\dots,u_n)$.
% and write ${\bf a}(i)$ for the $i$-th column vector of $A$.
For any vector $v\in\R^N$ such that $Av=-\delta$, we set 
\begin{equation}
\varphi_v(z):=\sum_{w\in L_A}\frac{z^{w+v}}{\prod_{i=1}^N\Gamma(1+w_i+v_i)}. \end{equation}
Here, $w_i$ and $v_i$ denote $i$-th entries of $w$ and $v$.
A subset $\s\subset\{ 1,\dots,N\}$ is called a simplex if $|\s|=n+e$ and $\det A_\s\neq 0$.
Here, $A_\s$ is a square matrix obtained by aligning column vectors of $A$ labeled by the index set $\s$.
For any simplex $\s$ we set $\bs:=\{ 1,\dots,N\}\setminus\s$. If ${\bf k}=(k_i)_{i\in\bs}\in\Z^{\bs}$, the equation $Av=-\delta$ has a unique solution $v=v_{\s}^{\bf k}$ such that $v_i=k_i$ ($i\in\bs$).
The $i$($\in\s$)-th entry of $v_\s^{\bf k}$ is denoted by $p_{\s i}(-\delta)$.
We set $\varphi_{\s,{\bf k}}(z;\delta):=\varphi_{v_\s^{\bf k}}(z)$.
%For any subset $\sigma$ of $\{1,\dots,N\},$ we write $\cone(\sigma)$ for the positive span of $\{{\bf a}(i)\}_{i\in\s}$, i.e., $\cone(\sigma)=\sum_{i\in\sigma}\R_{\geq 0}{\bf a}(i).$
%We often identify a subset $\sigma\subset\{1,\dots,N\}$ with the corresponding set of vectors $\{{\bf a}(i)\}_{i\in\sigma}$ or with the set $\cone(\s)$.
%A collection $S$ of subsets of $\{1,\dots,N\}$ is called a triangulation if $\{\cone(\sigma)\mid \sigma\in S\}$ is the set of cones in a simplicial fan whose support equals $\cone(A)$. We write $(\Z^{N})^\vee$ for the dual lattice of $\Z^{N}$.
%We often identify $(\R^N)^\vee=(\Z^N)^\vee\otimes_{\Z}\R$ with the set of row vectors via dot product.
%Then, for any generic choice of a vector $\omega\in(\R^N)^\vee,$ we can define a triangulation $T(\omega)$ as follows: a subset $\sigma\subset\{1,\dots,N\}$ belongs to $T(\omega)$ if there exists a row vector ${\bf n}\in\R^{1\times n}$ such that ${\bf n}\cdot{\bf a}(i)=\omega_i$ if $i\in\sigma$ and ${\bf n}\cdot{\bf a}(j)<\omega_j$ if $ j\in\barsigma$.
%A triangulation $T$ is called a {regular triangulation} if $T=T(\omega)$ for some $\omega.$
For any generic vector $\omega\in\R^N$, we write $T(\omega)$ for the corresponding regular triangulation and regard it as a collection of simplices $\s$ (\cite[Chapter 8]{Stu}).
For a fixed regular triangulation $T$, we say that the parameter vector $\delta$ is {very generic} if $A_\s^{-1}(\delta+{\bf k})$ has no integral entry for any  simplex $\sigma\in T$ and any ${\bf k}\in\Z^{n+e}$.
%Let us put $H_\sigma=\{ j\in\{ 1,\dots, N \}\mid |A_\sigma^{-1}{\bf a}(j)|=1\}$.
We set 
\begin{equation}
U_\sigma=\left\{z\in(\C^*)^N\mid {\rm abs}\left(z_\sigma^{-A_\sigma^{-1}{\bf a}(j)}z_{j}\right)<R, \text{for all } j\in \bs\right\},
\end{equation}
where $R>0$ is a small positive real number and abs stands for the absolute value. For a regular triangulation $T$, we set $U_T:=\cap_{\s\in T}U_\s\neq\varnothing$.

\noindent
We set $P:=\sum_{j=1}^ev_j{\rm New}(q_j)$ \mychange{and} $U:=\{ x\in(\C^*)^n\mid q_j(x)\neq 0\ (j=1,\dots,e)\}$ and identify it with a closed subvariety $\{ (x;t)\in(\C^*)^{n+e}\mid t_{1}=q_1(x),\dots,t_e=q_e(x)\}$ of a complex torus $(\C^*)^{n+e}={\rm Spec}\, \C[x_1^{\pm1},\dots,x_{n}^{\pm1},t_1^{\pm1},\dots,t_e^{\pm1}]$.
%We assume $U$ is sch\"on.
%Namely, we assume that there is a fan structure $\Sigma$ on the tropical variety ${\rm Trop}(U)$ such that the closure $X$ of $U$ in the associated toric variety is smooth and $D=X\setminus U$ is a simple normal crossing divisor (\cite{Hacking}).
In the notation of \S\ref{sec:2.2}, $f_1,\dots,f_{n+e}$ are simply the coordinate projections $x_i,t_j:(\C^{n+e})^*\rightarrow\C^*$ restricted to $U$ and it satisfies the condition $(*)$ for a suitable projective compactification $X$ of $U$ (\cite[Lemma 2.3]{Huh}).
We set $L:=\sum_{i=1}^nu_i\log x_i-\sum_{j=1}^ev_j\log q_j(x)$.
We define a logarithmic integrable connection on the trivial bundle on $X$ by $\nabla_\pm:=d\pm dL\wedge:\mathcal{O}_X\rightarrow\Omega^1_{\log}$.
We also write $\omega_0^\pm$ for $\omega_0$ to emphasize that the cohomology class $[\omega_0^\pm]$ is taken in the cohomology group $\Homo^n_\pm$.
For any convex polytope $\Delta$ in an Euclidean space $\R^n$ such that the origin is contained in $\Delta$, we set $\Delta^\circ:=\{ \phi\in\Hom_{\R}(\R^N,\R)\mid \langle\phi,v\rangle\geq -1 \text{ for all }v\in \Delta\}$ and $\vol_{\Z}(\Delta):=n!{\rm vol}(\Delta)$ where the symbol vol stands for the Lebesgue measure of $\R^n$.

\begin{thm}\label{thm:Amplitude}
The integral (\ref{eqn:amp}) is convergent if $u$ is in the interior of $P$. For a generic choice of the parameter $\delta$, it is given by
\begin{align}
{\rm amplitude}
&=\vol_\Z(P-u)^\circ\label{eqn:1st}\\
&=\sum_{p\in{\rm Crit}(L)}\frac{\change{(-1)^n}}{\det H_{L}^{\rm toric}(p)}\label{eqn:2nd}\\
&=\frac{\change{\langle \omega_0^+,\omega_0^-\rangle_{ch}}}{(2\pi\ii)^n}\label{eqn:3rd}
\end{align}
Furthermore, the following identity is true:
\begin{equation}
\frac{\change{\langle \omega_0^+,\omega_0^-\rangle_{ch}}}{(2\pi\ii)^n}
=\change{v_1\cdots v_e}\sum_{\s\in T}\frac{1}{|\det A_\s|}\prod_{i\in\s}\frac{1}{p_{\s i}(\change{\delta})}.\label{eqn:Amplitude}
\end{equation}
\end{thm}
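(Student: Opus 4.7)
The plan is fourfold, mirroring the four assertions of the theorem.

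For convergence and the polytopal formula (\ref{eqn:1st}), I would use the tropical sector decomposition of \cite{AH}. Rescaling $x = e^{y/\ve}$ with $\ve > 0$ small, the function $\ve \log(x^{\ve u} q(x)^{-\ve v})$ tropicalizes to the piecewise-linear function $\langle u, y\rangle - h_P(y)$, where $h_P$ is the support function of $P=\sum v_j\mathrm{New}(q_j)$. Absolute convergence along each cone of the normal fan of $P$ is equivalent to $u \in \mathrm{int}(P)$, and Laplace's method applied to the scaled integrand, taking into account the prefactor $\ve^n$, isolates the contribution supported on the boundary piece $\partial(P-u)^\circ$, whose total mass is $\mathrm{vol}_\Z(P-u)^\circ$.

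For (\ref{eqn:2nd}) and (\ref{eqn:3rd}), I would invoke \S\ref{sec:2.2}. The regular functions $(x_i, q_j)$ are nowhere vanishing on $U$ and satisfy condition $(*)$ on the compactification $X$ constructed in \cite[Lemma 2.3]{Huh}; for generic $(u,v)$, condition $(\mathrm{generic})$ holds and $\mathrm{Crit}(L)$ is discrete of cardinality $(-1)^n\chi(U^{an})$. Since $\omega_0 = d\log x_1 \wedge \cdots \wedge d\log x_n$ is logarithmic on $X$, Corollary \ref{cor:2.6} applies with $L$ in place of $F_\alpha$, and item (3) of the Remark after Theorem \ref{thm:2} yields
\begin{equation*}
\frac{\langle \omega_0^-, \omega_0^+\rangle_{ch}}{(2\pi\ii)^n}=\sum_{p\in\mathrm{Crit}(L)}\frac{1}{\det H^{\rm toric}_L(p)}\cdot\left(\frac{\omega_0}{\omega_0}(p)\right)^{\!2}=\sum_{p\in\mathrm{Crit}(L)}\frac{1}{\det H^{\rm toric}_L(p)}.
\end{equation*}
The convention $\langle \omega_0^+,\omega_0^-\rangle_{ch}=(-1)^n\langle \omega_0^-,\omega_0^+\rangle_{ch}$ accounts for the sign $(-1)^n$ in (\ref{eqn:2nd}), so (\ref{eqn:3rd}) is equivalent to (\ref{eqn:2nd}). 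The identification of this critical-point sum with $\mathrm{vol}_\Z(P-u)^\circ$, closing the loop with (\ref{eqn:1st}), is then a classical consequence of the global residue theorem on the toric variety $X_P$ associated to the normal fan of $P$ (alternatively, a Passare-Tsikh type formula expressing the normalized volume of the dual polytope as a toric Hessian sum along the positive real critical locus of the log-likelihood $L$).

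For the triangulation formula (\ref{eqn:Amplitude}), I would realize (\ref{eqn:amp}) as an Euler integral for the $A$-hypergeometric system attached to the Cayley matrix (\ref{CayleyConfigu}) with parameter vector $\delta$. Passing from the $x$-integral to an integral over the compactified torus $(\C^*)^{n+e}$ via $t_j = q_j(x)$ produces the Jacobian factor $v_1\cdots v_e$. For each simplex $\sigma \in T$, restricting attention to the convergence domain $U_\sigma$ and evaluating the residue series gives a $\Gamma$-series $\varphi_{\sigma,\mathbf{0}}(z;\delta)$ whose leading term as $\ve \to 0^+$ is $\prod_{i\in\sigma} 1/p_{\sigma i}(\delta)$; the global sum over $\sigma \in T$ reproduces the integral because $T$ is regular and $U_T \neq \varnothing$, and the Jacobian of the secondary change to $z$-coordinates contributes $1/|\det A_\sigma|$. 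The main obstacle will be to justify the interchange of the limit $\ve \to 0$ with the $\Gamma$-series summation in this last step, and to rigorously track the signs and orientations throughout the chain (\ref{eqn:1st})--(\ref{eqn:Amplitude}), particularly the compatibility of the cycle $\R^n_{\geq 0}$ with the local cycles $\Gamma_p$ appearing in Theorem \ref{thm:2}.
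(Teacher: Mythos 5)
Your treatment of \eqref{eqn:1st}--\eqref{eqn:3rd} is consistent with the paper's. Convergence and the polytopal formula \eqref{eqn:1st} are indeed the tropical/Laplace argument, which the paper simply cites via \cite[Theorem 13]{StuT} (summarizing \cite{AH}); the paper adds only a correction to the sign in \cite{StuT}, explained by the fact that the Newton polytope map $\Phi=-(\theta_1L,\dots,\theta_nL)$ is an orientation-preserving diffeomorphism $\R^n_{>0}\to{\rm Int}(P-u)$. Your derivation of \eqref{eqn:2nd}$=$\eqref{eqn:3rd} via Corollary~\ref{cor:2.6} and the convention $\langle\omega_+,\omega_-\rangle_{ch}=(-1)^n\langle\omega_-,\omega_+\rangle_{ch}$ is exactly what the paper intends by ``follows from Theorem~\ref{thm:2}.'' The ``global residue theorem on $X_P$'' you sketch for \eqref{eqn:1st}$=$\eqref{eqn:2nd} is a plausible alternative framing, but the paper does not argue this independently; it is part of what is cited.

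For \eqref{eqn:Amplitude}, however, there is a genuine gap, and you have correctly identified its location but not its resolution. The $\Gamma$-series approach you propose is essentially the paper's \emph{illustration} via \cite[Theorem 1.1]{GMH}, but note that the paper's version of this argument never takes an $\ve\to 0$ limit at all: it instead observes that $\langle\omega_0^+,\omega_0^-\rangle_{ch}$ is independent of the coefficient vector $z$ (by Theorem~\ref{thm:1} applied to a Hovanskii toric compactification, since all residues of $\nabla_\pm$ are integral linear combinations of $\delta$-entries), and then picks out the $\omega$-initial monomial of $\varphi_{\s,{\bf k}}(z;\delta)\varphi_{\s,{\bf k}'}(z;-\delta)$ in the sense of \cite[Theorem 3.4.2]{SST}, which singles out ${\bf k}={\bf 0}$ algebraically. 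So the interchange-of-limits issue you flag does not arise in the paper's $\Gamma$-series illustration; and even that illustration is presented only ``under some additional assumption.'' The paper's actual, complete proof of \eqref{eqn:Amplitude} sidesteps all of this: once \eqref{eqn:1st} and \eqref{eqn:3rd} are known, it suffices to establish the purely combinatorial identity
\begin{equation*}
\vol_\Z(P-u)^\circ=v_1\cdots v_e\sum_{\s\in T}\frac{1}{|\det A_\s|}\prod_{i\in\s}\frac{1}{p_{\s i}(\delta)},
\end{equation*}
and this is proved in the appendix by writing $\vol_\Z(\Cone(A)_X^*)$ as the scaling limit $\lim_{\alpha\to 0^+}\alpha^{n+e}\int e^{-\sum y_iq_i}(yx)^{\alpha\delta}\frac{dy}{y}\frac{dx}{x}$ (Proposition~\ref{prop:ExponentialAmplitude}) and then applying Filliman duality $\vol_\Z(\Cone(A)^*_X)=\sum_{\s\in T}\vol_\Z(\s^*_X)$ together with the elementary formula $\vol_\Z(\s^*_X)=\bigl(|\det A_\s|\prod_{i\in\s}p_{\s i}(X)\bigr)^{-1}$. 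You should replace the $\Gamma$-series limit argument with this combinatorial reduction; as it stands your proposal does not close the proof of \eqref{eqn:Amplitude}.
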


\begin{proof}
The convergence of the integral (\ref{eqn:amp}) and the equalities (\ref{eqn:1st}) and (\ref{eqn:2nd}) are \mychange{stated in} \cite[Theorem 13]{StuT} which summarizes the result of \cite[\S2 Claim 1, \S4 Claim 2 and \S7]{AH}.
\change{Note the the Newton polytope map $\Phi:=-(x_1\frac{\partial}{\partial x_1}\log L,\dots,x_n\frac{\partial}{\partial x_n}\log L):\R^n_{>0}\tilde{\rightarrow} {\rm Int}(P-u)$ is an orientation preserving diffeomorphism. In \cite[Theorem 13]{StuT}, signature $(-1)^n$ is missing.}
The third equality (\ref{eqn:3rd}) follows from Theorem \ref{thm:2}.
Let us show the equality (\ref{eqn:Amplitude}) under some additional assumption to illustrate its relation to our previous work \cite{GMH}.
A complete proof is \mychange{provided} in the appendix.
%Observe that amplitude is dependent only on the Newton polytope $P$ and parameters $\delta$.
If we write $q_j(x)=\sum_{k=1}^{N_j}z_k^{(j)}x^{{\bf a}^{(j)}(k)}$ with $z_k^{(j)}>0$, we take the variable $z=(z_k^{(j)})$ so that $z\in U_T$.
Then, \cite[Theorem 1.1]{GMH} in our setting reads 
\begin{align}
&\frac{\langle \omega_0^+,\omega^-_0\rangle_{ch}}{(2\pi\ii)^n}\nonumber\\
=&v_1\cdots v_e\sum_{\s\in T}\sum_{[A_{\bs}{\bf k}]\in\Z^{n+e}/\Z A_\s}\frac{1}{|\det A_\s|}\frac{\pi^{n+e}}{\sin\pi A_\s^{-1}(\delta+A_{\bs}{\bf k})}
\varphi_{\s,{\bf k}}( z;\delta)
\varphi_{\s,{\bf k}^\prime}( z;-\delta).\label{eqn:GMH}
\end{align}
Here, ${\bf k}^\prime$ denotes the element such that $[-A_{\bs}{\bf k}]=
[A_{\bs}{\bf k}^\prime]$ holds in the group $\Z^{n+e}/\Z A_\s$.
Now, we consider a toric compactification $X$ of $U$ constructed in \cite{Hov} (see also \cite[\S3]{MH}).
For any generic $z$, the residue of $\nabla_\pm$ along any irreducible component of the boundary divisor $D=X\setminus U$ is an integral linear combination of entries of $\delta$.
In particular, we can conclude from Theorem \ref{thm:1} that $\langle \omega_0^+,\omega^-_0\rangle_{ch}$ does not depend on $z$.
Let us take a generic vector $\omega\in\R^N$ and let us write $\mathcal{T}$ for the set of associated top-dimensional standard pairs in the sense of \cite[\S 3.2]{SST}.
Each element of $\mathcal{T}$ takes a form $(\s,{\bf k})$ where $\s\in T(\omega)$ and ${\bf k}\in\Z_{\geq 0}^{\bs}$.
Moreover, for a fixed $\s\in T(\omega)$, the set $\{ A_{\bs}{\bf k}\mid (\s,{\bf k})\in\mathcal{T}\}$ gives a complete system of representatives of $\Z^{n+e}/\Z A_{\s}$.
We choose $(\s,{\bf k}),(\s,{\bf k}^\prime)\in\mathcal{T}$ so that $[-A_{\bs}{\bf k}]=[A_{\bs}{\bf k}^\prime]$ in $\Z^{n+e}/\Z A_\s$.
In view of \cite[Theorem 3.4.2]{SST}, the $\omega$-initial monomial of the product $\varphi_{\s,{\bf k}}( z;\delta)\varphi_{\s,{\bf k}^\prime}( z;-\delta)$ is $z^{v_\s^{\bf k}+v_\s^{{\bf k}^\prime}}$ whose $\omega$-degree is given by
\begin{equation}
\langle\omega,v_\s^{\bf k}+v_\s^{{\bf k}^\prime}\rangle=
\begin{cases}
=0&({\bf k}={\bf 0})\\
>0&({\bf k}\neq{\bf 0})
\end{cases}.
\end{equation}
As the cohomology intersection number $\langle \omega_0^+,\omega^-_0\rangle_{ch}$ does not depend on $z$, it is equal to the $\omega$-initial term of the right-hand side of (\ref{eqn:GMH}), which proves the last equality (\ref{eqn:Amplitude}).
\end{proof}

%\begin{rem}\label{rmk:medama}
%The identity ${\rm amplitude}=\frac{\change{\langle \omega_0^+,\omega_0^-\rangle_{ch}}}{(2\pi\ii)^n}$ is false if we remove the assumption that $q_1,\dots,q_e$ are nonsingular.
%As an example, let us consider polynomials $q_1=z_1+z_2x$ and $q_2=z_3+z_4x$.
%\end{rem}

\change{
At this stage, one may notice that Theorem \ref{thm:Amplitude} also proves a formula 
\begin{equation}\label{eqn:CT}
\vol_\Z(P-u)^\circ=v_1\cdots v_e\sum_{\s\in T}\frac{1}{|\det A_\s|}\prod_{i\in\s}\frac{1}{p_{\s i}(\delta)},
\end{equation}
which is of purely combinatorial nature.
In the appendix, we prove (\ref{eqn:CT}) in a purely combinatorial way.
This provides a complete proof of the identity (\ref{eqn:Amplitude}).
%We also note that the formula above can be used to characterize the cohomology intersection form by means of secondary equation.
}

%\begin{thm}[characterization]
%\change{c.i.f. is characterized as a unique rational solution of the secondary equation so that $\frac{\langle\frac{dx}{x},\frac{dx}{x}\rangle_{ch}}{(2\pi\ii)^n}=\vol_\Z((P-u)^\circ)$.}
%\end{thm}

Let us also discuss complex stringy integrals (\cite[\S8]{AH}).
For any vector $v\in\C^N$ such that $Av=-\delta$, we set 
\begin{equation}
\psi_v(z):=\sum_{w\in L_A}\frac{z^{u}}{\prod_{i=1}^N\Gamma(1+w_i+v_i)}.
\end{equation}
For any simplex $\s$ and ${\bf k}\in\Z^{\bs}$, we set $\psi_{\s,{\bf k}}(z;\delta)=\psi_{v_\s^{\bf k}}(z)$. \cite[Theorem 6.2]{MHA} combined with \cite[Theorem 2.4 and 2.5]{GMH} proves the following
\begin{thm}\label{thm:Absolute}
For any regular triangulation $T$ and $z\in U_T$, one has an identity
\begin{align}
&\frac{1}{(-2\pi\ii)^n}\int_{\C^n}\prod_{i=1}^n|x_i|^{2u_i}\prod_{j=1}^e|q_j(x)|^{-2v_j}\omega_0\wedge\overline{\omega_0}\nonumber\\
=&\frac{\Gamma({\bf 1}-v)}{\Gamma(v)}\sum_{\s\in T}\sum_{[A_{\bs}{\bf k}]\in\Z^{(n+k)\times 1}/\Z A_\s}\frac{(-1)^{|{\bf k}|}}{|\det A_\s|}\frac{\pi^{n+k}}{\sin\pi A_\s^{-1}(\delta+A_{\bs}{\bf k})}
|z_\s|^{-2A_\s^{-1}\delta}
\psi_{\s,{\bf k}}\left( z;\delta\right)
\psi_{\s,{\bf k}}\left( \bar{z};\delta\right).\label{AbsoluteQR}
\end{align}
Note that both sides of (\ref{AbsoluteQR}) are meromorphic functions in $\delta$ and $\psi_{\s,{\bf k}}\left( z;\delta\right)
\psi_{\s,{\bf k}}\left( \bar{z};\delta\right)=|\psi_{\s,{\bf k}}\left( z;\delta\right)|^2$ when $\delta$ is real.
\end{thm}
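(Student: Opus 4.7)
The plan is to derive (\ref{AbsoluteQR}) by assembling two existing results from the author's previous work, essentially without introducing any new analytic input. First I would recognise the left-hand side as a Hermitian self-period of the regular holonomic $A$-hypergeometric system attached to the Cayley configuration $A$ with parameter $\delta$: under the Cayley substitution $t_j=q_j(x)$, the integrand $\prod_i |x_i|^{2u_i}\prod_j |q_j(x)|^{-2v_j}$ becomes the modulus squared of the multi-valued function governing the GKZ system, while $\omega_0\wedge\overline{\omega_0}$ plays the role of the canonical top form in the holomorphic versus anti-holomorphic pairing between $\Homo^n_+$ and $\Homo^n_-$.

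Next I would invoke \cite[Theorem 6.2]{MHA}, which provides a general decomposition of complex integrals of this shape as a bilinear sum in $\psi$-series indexed by standard pairs $(\sigma,{\bf k})$, where $\sigma$ runs through a fixed regular triangulation $T$ and $[A_{\bs}{\bf k}]$ runs through $\Z^{n+e}/\Z A_\sigma$. This produces the skeletal shape of the right-hand side of (\ref{AbsoluteQR}): a sum of products $\psi_{\sigma,{\bf k}}(z;\delta)\psi_{\sigma,{\bf k}}(\bar z;\delta)$ weighted by scalar coefficients. Note that, in contrast to the real case (\ref{eqn:GMH}), both factors carry the same pair $(\sigma,{\bf k})$ and the same $\delta$, because complex conjugation of the integrand sends the period to itself, not to its dual.

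The explicit scalar coefficients, namely the prefactor $\Gamma({\bf 1}-v)/\Gamma(v)$, the sine denominator $\pi^{n+e}/\sin\pi A_\sigma^{-1}(\delta+A_{\bs}{\bf k})$, the Jacobian $1/|\det A_\sigma|$, the monomial $|z_\sigma|^{-2A_\sigma^{-1}\delta}$, and the sign $(-1)^{|{\bf k}|}$, are then read off from \cite[Theorems 2.4 and 2.5]{GMH}. Those theorems compute both the twisted cycle intersection numbers (supplying the Gamma and sine factors) and the cohomology intersection numbers attached to the standard-pair decomposition. Substituting these values into the bilinear expansion of the previous step via the twisted period relation (\ref{eqn:TPR1}) yields exactly (\ref{AbsoluteQR}).

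The main obstacle is not conceptual but bookkeeping: one has to align the normalisations of \cite{MHA} (where $\psi$-series are built from possibly complex vectors $v\in\C^N$) with those of \cite{GMH} (where everything is indexed by integral standard pairs $(\sigma,{\bf k})$), and track how the signs $(-1)^{|{\bf k}|}$, the factor $1/(-2\pi\ii)^n$ and the $\Gamma$-factors reshuffle when the complex conjugate pairing is rewritten through the cohomology intersection form $\langle\bullet,\bullet\rangle_{ch}^{\tau}$. Once these conventions are matched, the identity extends from the convergence locus of the integral to all generic $\delta$ by analytic continuation, using the meromorphic $\Gamma$-function structure of both sides.
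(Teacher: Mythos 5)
Your proposal matches the paper's own (very terse) proof, which is nothing more than the citation \cite[Theorem 6.2]{MHA} combined with \cite[Theorems 2.4 and 2.5]{GMH}; you have simply spelled out how the quadratic relation of \cite{MHA} supplies the bilinear $\psi$-series decomposition and how \cite{GMH} supplies the Gamma, sine and $|\det A_\sigma|$ coefficients, which is exactly the intended reading of the paper's one-line proof.
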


\noindent
If we introduce a scaling parameter $\ve$ and replace $\delta$ by $\ve\delta$, it is straightforward to compute the term-wise scaling limit ${\ve\rightarrow+0}$ in (\ref{AbsoluteQR}) and it recovers the result of \cite[\S8]{AH}.
\begin{cor}
\begin{align}
&\frac{1}{(-2\pi\ii)^n}\lim_{\ve\rightarrow 0}\ve^n\int_{\C^n}\prod_{j=1}^e|q_j(x)|^{-2\ve v_j}\prod_{i=1}^n|x_i|^{2\ve u_i}\frac{dx}{x}\wedge\frac{d\bar{x}}{\bar{x}}\nonumber\\
=&\vol_\Z(P-u)^\circ\\
=&\change{v_1\cdots v_e}\sum_{\s\in T}\frac{1}{|\det A_\s|}\prod_{i\in\s}\frac{1}{p_{\s i}(\change{\delta})}\\
=&\frac{\langle \change{\omega_0^+,\omega_0^-}\rangle_{ch}}{(2\pi\ii)^n}.
\end{align}
\end{cor}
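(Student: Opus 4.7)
The plan is to derive this corollary as a scaling asymptotic of Theorem \ref{thm:Absolute}, and then invoke Theorem \ref{thm:Amplitude} for the two remaining equalities. Concretely, I would substitute $\delta\mapsto\ve\delta$ in (\ref{AbsoluteQR}), multiply both sides by $\ve^n$, and let $\ve\to 0^+$; the left-hand side of (\ref{AbsoluteQR}) then becomes exactly the limit appearing in the statement.

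The main work is the term-by-term analysis of the right-hand side. The prefactor expands as
\[
\frac{\Gamma(\mathbf{1}-\ve v)}{\Gamma(\ve v)}=\prod_{j=1}^e\frac{\Gamma(1-\ve v_j)}{\Gamma(\ve v_j)}=\ve^e\,v_1\cdots v_e\,(1+O(\ve)),
\]
using $\Gamma(\ve v_j)\sim 1/(\ve v_j)$. For each $\s\in T$, the sum $\sum_{[A_{\bs}\mathbf{k}]\in\Z^{n+e}/\Z A_\s}$ has only $|\det A_\s|$ terms, so no dominated-convergence issue arises. Writing $p_{\s i}(\delta)=(A_\s^{-1}\delta)_i$ in the sign convention of \S\ref{sec:4}, the factor
\[
\frac{\pi^{n+e}}{\sin\pi A_\s^{-1}(\ve\delta+A_{\bs}\mathbf{k})}
\]
behaves as follows: for the representative $\mathbf{k}=\mathbf{0}$ of the trivial class, each of the $n+e$ sine factors is $\sim\pi\ve p_{\s i}(\delta)$, yielding a divergence $\ve^{-(n+e)}\prod_{i\in\s}p_{\s i}(\delta)^{-1}$; for any nontrivial class one has $A_\s^{-1}A_{\bs}\mathbf{k}\notin\Z^{n+e}$, so the sine product stays bounded away from zero as $\ve\to 0$.

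The factor $|z_\s|^{-2\ve A_\s^{-1}\delta}\to 1$, and for $z\in U_T$ the A-hypergeometric series $\psi_{\s,\mathbf{k}}(z;\ve\delta)\psi_{\s,\mathbf{k}}(\bar z;\ve\delta)$ converges to the finite limit obtained by setting $\ve=0$ in its coefficients; for $\mathbf{k}=\mathbf{0}$ this limit equals $1$, coming from the $w=0$ term since all other $w\in L_A$ give contributions killed by the $\ve$-prefactors. Combining the $\ve^n$ multiplier with the $\ve^e$ from the $\Gamma$-prefactor and the $\ve^{-(n+e)}$ from the $\mathbf{k}=\mathbf{0}$ sine factor, the nontrivial classes contribute $O(\ve)$ and vanish, leaving
\[
\lim_{\ve\to 0^+}\frac{\ve^n}{(-2\pi\ii)^n}\int_{\C^n}\prod_j|q_j|^{-2\ve v_j}\prod_i|x_i|^{2\ve u_i}\frac{dx}{x}\wedge\frac{d\bar x}{\bar x}=v_1\cdots v_e\sum_{\s\in T}\frac{1}{|\det A_\s|}\prod_{i\in\s}\frac{1}{p_{\s i}(\delta)}.
\]
By identity (\ref{eqn:Amplitude}) of Theorem \ref{thm:Amplitude}, this common value also equals $\vol_\Z(P-u)^\circ$ and $\langle\omega_0^+,\omega_0^-\rangle_{ch}/(2\pi\ii)^n$, completing the chain of equalities.

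The hard part will be the bookkeeping of signs and the precise justification that the non-leading hypergeometric contributions die in the limit: one must verify that the sign $(-1)^{|\mathbf{k}|}=1$ at $\mathbf{k}=\mathbf{0}$ in Theorem \ref{thm:Absolute} combines correctly with the signs from $\sin(\pi\ve p_{\s i}(\delta))\sim\pi\ve p_{\s i}(\delta)$ and from the convention $(v_\s^{\mathbf 0})_i=p_{\s i}(-\delta)$, so that the limit acquires the positive sign structure of formula (\ref{eqn:Amplitude}). Everything else is either a direct substitution from (\ref{AbsoluteQR}) or an appeal to Theorem \ref{thm:Amplitude}.
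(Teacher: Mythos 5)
Your proposal is correct and follows essentially the same route the paper indicates: substitute $\delta\mapsto\ve\delta$ in (\ref{AbsoluteQR}), multiply by $\ve^n$, take the term-wise limit $\ve\to 0^+$ using $\Gamma(\ve v_j)\sim(\ve v_j)^{-1}$ and $\sin(\pi\ve p_{\s i}(\delta))\sim\pi\ve p_{\s i}(\delta)$, and invoke Theorem~\ref{thm:Amplitude} for the remaining equalities. One small imprecision: for a nontrivial class the sine product is not actually bounded away from zero (some coordinates of $A_\s^{-1}A_{\bs}\mathbf{k}$ may still be integers); what matters is that at least one coordinate is non-integral, so the sine product vanishes to order at most $\ve^{n+e-1}$, whence the contribution is $O(\ve)$ as you correctly conclude.
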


\section{Proof of Theorem \ref{thm:1}}\label{sec:33}
We use the notation of \S\ref{sec:2.1} \mychange{and} prove Theorem \ref{thm:1} only when the condition $(!)_+$ is true.
The case of the condition $(!)_-$ can be proved in the same manner.
In this section, we always equip $X$ with the analytic topology.
In order to simplify the notation, we will write $X$ for $X^{an}$ throughout this section.
We begin with some basic formulas of currents.
Let \mychange{the symbol} $\DD^{\prime (p,q)}_X$ denote the sheaf of $(p,q)$-currents on $X^{an}$.
For any local section $T$ of $\DD^{\prime (p,q)}_X$ and a locally defined smooth $(n-p,n-q)$-form $\varphi$ on $X$, we write $\langle T,\varphi\rangle\in\C$ for the natural duality pairing of currents. 
Since any element of $\EE_{\rm log}^{p,q}:=\Omega_{\log}^p\otimes_{\mathcal{O}_{X^{an}}}\EE^{(0,q)}_X$ is a differential form with locally integrable coefficients, we have a natural morphism $\PV:\EE_{\rm log}^{p,q}\hookrightarrow\DD^{\prime (p,q)}_X$.
We write $\iota_j:D_j\hookrightarrow X$ for the natural embedding.

\begin{lem}\label{lem:3.1}
Let $f,\varphi$ be smooth functions on $\C$ and assume that $\varphi$ has a compact support. Then the following identity is true:

\begin{equation}
\int_{\C}f(z)\frac{\partial\varphi}{\partial\bar{z}}\frac{dz}{z}\wedge d\bar{z}=
\int_{\C}\bar{\partial}f\wedge\varphi(z)\frac{dz}{z}+2\pi\ii f(0)\varphi(0)
\end{equation}
\end{lem}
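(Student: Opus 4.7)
The plan is to recognize this as a classical distributional identity expressing $\bar\partial(1/z)=\pi\delta_0$, and to derive it by applying Stokes' theorem on the complement of a small disk around the origin, then passing to the limit. Throughout, the compact support of $\varphi$ removes any difficulty at infinity, so the only singularity to handle is the pole of $dz/z$ at the origin.

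First, I would combine the left-hand side and the first term on the right into a single integral. Writing $\bar\partial\varphi=(\partial\varphi/\partial\bar z)\,d\bar z$ and using $d\bar z\wedge dz=-dz\wedge d\bar z$, one sees that
\begin{equation*}
\int_\C f\frac{\partial\varphi}{\partial\bar z}\frac{dz}{z}\wedge d\bar z \;-\;\int_\C\bar\partial f\wedge\varphi\,\frac{dz}{z}
\;=\;\int_\C\frac{\partial(f\varphi)}{\partial\bar z}\frac{dz}{z}\wedge d\bar z
\end{equation*}
by the Leibniz rule. Since $\partial(f\varphi)\wedge dz=0$ as $dz\wedge dz=0$, the integrand is precisely $-d\bigl(f\varphi\,\tfrac{dz}{z}\bigr)$. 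Thus it suffices to prove the distributional identity
\begin{equation*}
\int_\C d\!\left(f\varphi\,\frac{dz}{z}\right)=-2\pi\ii\,f(0)\varphi(0),
\end{equation*}
interpreted as a principal value.

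Second, I would apply Stokes' theorem on the punctured domain $\C\setminus\overline{B_\varepsilon(0)}$, where $B_\varepsilon(0)=\{|z|<\varepsilon\}$. Since $\varphi$ is compactly supported, no boundary contribution comes from infinity, and one gets
\begin{equation*}
\int_{\C\setminus\overline{B_\varepsilon}}d\!\left(f\varphi\,\frac{dz}{z}\right)=-\int_{\partial B_\varepsilon}f\varphi\,\frac{dz}{z},
\end{equation*}
where the sign reflects that the induced orientation of $\partial B_\varepsilon$ as the boundary of the exterior domain is clockwise. Parametrizing $z=\varepsilon e^{i\theta}$ one has $\int_{\partial B_\varepsilon}\tfrac{dz}{z}=2\pi\ii$, and by uniform continuity $f\varphi\to f(0)\varphi(0)$ along the shrinking circle, so the boundary integral tends to $2\pi\ii\,f(0)\varphi(0)$.

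Third, for the bulk integral I would use that $1/|z|$ is locally integrable in dimension two (via polar coordinates), so dominated convergence lets me replace $\C\setminus\overline{B_\varepsilon}$ by $\C$ in the limit $\varepsilon\to 0$. Combining these two limits yields the desired identity. There is no genuine obstacle here — the argument is standard and identical in spirit to the derivation of the Cauchy kernel as a fundamental solution of $\bar\partial$; the only thing requiring care is consistent bookkeeping of signs arising from the orientation of $\partial B_\varepsilon$ and the antisymmetry of $dz\wedge d\bar z$.
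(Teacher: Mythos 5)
Your argument is correct and is the standard elementary proof of this residue identity; the paper itself does not give a proof, remarking only that it is elementary, so your route is presumably the intended one. The three ingredients — the Leibniz collapse to $\int_\C \partial_{\bar z}(f\varphi)\,\tfrac{dz}{z}\wedge d\bar z$ via the sign flip $d\bar z\wedge dz=-dz\wedge d\bar z$, the identification of this integrand with $-d\bigl(f\varphi\,\tfrac{dz}{z}\bigr)$, and Stokes on $\C\setminus\overline{B_\varepsilon}$ followed by $\varepsilon\to 0$ using local integrability of $1/|z|$ in dimension two — are all in order, and the sign bookkeeping (clockwise induced orientation on $\partial B_\varepsilon$, hence the minus sign before the circle integral) is consistent with the paper's orientation convention $\tfrac{\ii}{2}\,dz\wedge d\bar z>0$. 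One small point worth making explicit: only $\varphi$ is compactly supported, but the form $f\varphi\,\tfrac{dz}{z}$ is already compactly supported, so there is indeed no contribution from infinity even though $f$ itself may grow.
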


\noindent
The proof of the lemma is elementary. Now we recall that $\C^n$ equipped with a holomorphic coordinate $(z_1,\dots,z_n)$ is oriented so that $(\frac{\ii}{2})^ndz_1\wedge\cdots dz_n\wedge d\bar{z}_1\wedge\cdots d\bar{z}_n=d\Re z_1\wedge\cdots d\Re z_n\wedge d\Im z_1\wedge \cdots\wedge d\Im z_n>0$.
The residue morphism $\Res_j:\Omega_{\log}^p\rightarrow\Omega_{D_j}^{p-1}(\log(D\setminus D_j))$ naturally induces a morphism $\EE_{\log}^{(p,q)}=\Omega_{\log}^p\otimes_{\mathcal{O}_{X^{an}}}\EE^{(0,q)}_X\rightarrow\EE_{D_j}^{(p-1,q)}(\log(D\setminus D_j))$ which is still denoted by $\Res_j$.
%This orientation is denoted by $\C^n_{\rm std}$. On the other hand, if we equip it with an orientation so that $(\frac{\ii}{2})^n(dz_1\wedge d\bar{z}_1)\wedge\cdots\wedge (dz_n\wedge d\bar{z}_n)=(dx_1\wedge dy_1)\wedge\cdots\wedge (dx_n\wedge dy_n)>0$, we write it for $\C^n_{\rm prd}$. Note that these orientations differ by the sign $(-1)^{\frac{n(n-1)}{2}}$.

\begin{prop}\label{prop:ResF}
For any $\omega\in\Homo^0(X;\EE^{(n,n-1)}_{\rm log})$, one has an identity
\begin{equation}\label{eqn:ResCur}
\bar{\partial}\PV(\omega)=\PV(\bar{\partial}\omega)+(-1)^{n-1}2\pi\ii\sum_{j=1}^N\iota_{j*}\PV(\Res_j(\omega)),
\end{equation}
where $\iota_{j*}$ denotes the direct image of a current.
\end{prop}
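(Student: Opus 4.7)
The plan is to reduce \eqref{eqn:ResCur} to a local computation via Stokes' theorem together with Lemma \ref{lem:3.1}. Since both sides are $\mathcal{C}^\infty$-linear in $\omega$ and local on $X$, I would fix a coordinate polydisc in which $D$ is cut out by $z_1\cdots z_k=0$, so that $D_j=\{z_j=0\}$ for $1\leq j\leq k$, and reduce by linearity to the case
$$\omega=g(z,\bar z)\,\frac{dz_1}{z_1}\wedge\cdots\wedge\frac{dz_k}{z_k}\wedge dz_{k+1}\wedge\cdots\wedge dz_n\wedge\eta,$$
where $g$ is smooth and compactly supported and $\eta$ is a smooth $(0,n-1)$-form. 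Multiple components requires no new geometric input because the identity \eqref{eqn:ResCur} involves only a single residue $\Res_j$ at a time.

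Next I would pair both sides against a smooth compactly supported function $\varphi$ on $X$ and translate \eqref{eqn:ResCur} into an integral identity: the left-hand side contributes $\int_X\omega\wedge\bar\partial\varphi$, the first term on the right contributes $\int_X\varphi\,\bar\partial\omega$, and each residue term contributes a multiple of $\int_{D_j}\Res_j(\omega)\,\varphi|_{D_j}$. To verify this integral identity I would excise $\varepsilon$-tubes, set $X_\varepsilon:=X\setminus\bigcup_j\{|z_j|<\varepsilon\}$, and apply Stokes' theorem to $\varphi\omega$ on $X_\varepsilon$. Because $1/z_j$ is locally integrable in $\C$, the bulk integrals converge absolutely as $\varepsilon\to 0$ and reproduce the combination $\int_X\varphi\,\bar\partial\omega - \int_X\omega\wedge\bar\partial\varphi$. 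The boundary $\partial X_\varepsilon$ decomposes as $-\sum_{j=1}^k\{|z_j|=\varepsilon\}$, and on each tube a Fubini reduction together with the one-variable contour integral $\oint_{|z_j|=\varepsilon}\frac{dz_j}{z_j}=2\pi\ii$, which is the essential content of Lemma \ref{lem:3.1}, extracts the factor $2\pi\ii$ and restricts the remaining form to $D_j$, producing the $j$-th residue contribution.

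The main obstacle will be the careful bookkeeping of signs and orientations. The factor $(-1)^{n-1}$ in the statement must emerge from the combination of (i) the graded-commutativity sign in the identity $\bar\partial\varphi\wedge\omega=(-1)^{2n-1}\omega\wedge\bar\partial\varphi$ when Leibniz is applied to $d(\varphi\omega)$; (ii) the induced orientation of the tube $\{|z_j|=\varepsilon\}$ viewed as a component of $\partial X_\varepsilon$ rather than as the boundary of the excised ball; (iii) the sign $(-1)^{j-1}$ acquired when the factor $\frac{dz_j}{z_j}$ is moved to the front of $\omega$ to match the definition of $\Res_j$; and (iv) the normalization between the wedge product $dz\wedge d\bar z$ and the standard volume form on $D_j$. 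I would first pin down all these signs in the base case $n=k=1$, which is a direct application of Lemma \ref{lem:3.1} to $f\varphi$, and then invoke Fubini to extend to arbitrary $n$ and $k$: the contributions from distinct components $D_j$ decouple because on the tube $\{|z_j|=\varepsilon\}$ the $z_j$-integration is independent of the remaining variables, and the potential contributions at pairwise intersections $D_i\cap D_j$ vanish in the $\varepsilon\to 0$ limit.
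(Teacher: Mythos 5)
Your proposal is correct and is essentially the proof the paper gives: partition of unity to localize to a coordinate polydisc where $D=\{z_1\cdots z_s=0\}$, pair the currents against a test function $\varphi$, and invoke Lemma~\ref{lem:3.1} (itself a one-variable Stokes/excision computation) together with Fubini. The paper is slightly more parsimonious in the reduction step: rather than keeping the anti-holomorphic factor $\eta$ general, it decomposes $\omega$ at once into monomial terms $f\,\delta_1\wedge\cdots\wedge\delta_n\wedge d\bar{z}_{\hat{j}}$ with a single index $j$ omitted from the anti-holomorphic part. For such a term $\bar\partial$ produces only the $\partial/\partial\bar{z}_j$-derivative, and $\Res_i(\omega)$ restricted to $D_i$ vanishes for every $i\neq j$ since $d\bar{z}_{\hat{j}}$ contains $d\bar{z}_i$, which dies on $D_i$; so exactly one residue survives per basis term and Lemma~\ref{lem:3.1} can be applied directly in the single variable $z_j$. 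This sidesteps re-running Stokes on $X_\varepsilon=X\setminus\bigcup_j\{|z_j|<\varepsilon\}$ and controlling the boundary pieces near $D_i\cap D_j$, which your excision version must address (and which you correctly note vanish by local integrability). Your sign checklist is the right list of checkpoints, and the base case $n=1$ is indeed exactly Lemma~\ref{lem:3.1}.
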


\begin{proof}
By means of a partition of unity, we only need to prove (\ref{eqn:ResCur}) in a coordinate neighborhood.
Let us take a coordinate neighborhood $(V,z_1,\dots,z_n)$ on $X$ so that $V\cap D=\{ z_1\cdots z_s=0\}$.
We set $\delta_i:=d\log z_i$ ($i=1,\dots,s$) and $\delta_i:=dz_i$ ($i=s+1,\dots,n$). Let us set $d\bar{z}_{\hat{j}}:=d\bar{z}_{1}\wedge\cdots\wedge\widehat{d\bar{z}_{j}}\wedge\cdots\wedge d\bar{z}_{n}$, $\delta:=\delta_1\wedge\cdots\wedge\delta_n$ and $\omega:=f\delta\wedge d\bar{z}_{\hat{j}}$.
We assume $1\leq j\leq s$.
Note that $\Res_j(\omega)=(-1)^{j-1}\rest_{z_j=0}f\delta_{\hat{j}}\wedge d\bar{z}_{\hat{j}}$.
For any test function $\varphi$ with its support contained in $V$, we have
\begin{align}
\langle \dbar \PV(\omega),\varphi\rangle&=\langle \PV(\omega),\dbar\varphi\rangle\\
&=(-1)^{n-j}\int_{\C^n}f(z)\frac{\partial\varphi}{\partial\bar{z}_j}\delta\wedge d\bar{z}\\
%&=(-1)^{n-j}\int_{\C_{\rm prd}^n}f(z)\frac{\partial\varphi}{\partial\bar{z}_j}(\delta_1\wedge d\bar{z}_1)\wedge\cdots\wedge (\delta_n\wedge d\bar{z}_n)\\
&=(-1)^{n-j}\prod_{i=1,i\neq j}^n(\int_{\C}\delta_i\wedge d\bar{z}_i)\int_{\C}f(z)\frac{\partial\varphi}{\partial\bar{z}_j}\delta_j\wedge d\bar{z}_j\\
&\overset{Lemma \ref{lem:3.1}}{=}(-1)^{n-j}\int_{\C^n}\frac{\partial f}{\partial\bar{z}_j}\varphi(z)\delta\wedge d\bar{z}+(-1)^{n-j}2\pi\ii\prod_{i=1,i\neq j}^n(\int_{\C}\delta_i\wedge d\bar{z}_i)\rest_{z_j=0}(f\varphi)\\
&=\int_{\C^n}\dbar{\omega}\wedge\varphi+(-1)^{n-1}2\pi\ii\int_{D_j}\Res_j(\omega)\wedge\rest_{z_j=0}\varphi\\
&=\langle \PV(\bar{\partial}\omega)+(-1)^{n-1}2\pi\ii\sum_{j=1}^N\iota_{j*}\PV(\Res_j(\omega)),\varphi\rangle.
\end{align}
\end{proof}
%In the above, $\C^n$ are all $\C^n_{\rm std}$ except for the red one.

\begin{cor}\label{cor:2.5}
For any $\omega\in\Homo^0(X;\EE^{(n,n-1)}_{\rm log})$ one has an identity
\begin{equation}
\int_X\dbar\omega=(-1)^n2\pi\ii\sum_{j=1}^N\int_{D_j}\Res_j(\omega).
\end{equation}
\end{cor}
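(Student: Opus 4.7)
The plan is to derive Corollary \ref{cor:2.5} directly from the distributional identity established in Proposition \ref{prop:ResF} by pairing both sides with the constant test function $\varphi \equiv 1$. Since $X$ is a compact complex manifold (being smooth projective), the constant function $1$ is a legitimate global test object; no partition of unity or cut-off is needed.

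First, I would write down the identity
\[
\bar{\partial}\PV(\omega)=\PV(\bar{\partial}\omega)+(-1)^{n-1}2\pi\ii\sum_{j=1}^N\iota_{j*}\PV(\Res_j(\omega))
\]
of $(n,n)$-currents on $X$, and evaluate each term against $\varphi=1$. Using the convention $\langle \dbar T, \varphi \rangle = \langle T, \dbar \varphi \rangle$ employed in the proof of Proposition \ref{prop:ResF}, the left-hand side collapses, because $\dbar 1 = 0$, to
\[
\langle \dbar \PV(\omega),1\rangle = \langle \PV(\omega),\dbar 1\rangle = 0.
\]
On the right-hand side, $\PV(\dbar\omega)$ is represented by the locally integrable $(n,n)$-form $\dbar\omega$, so $\langle \PV(\dbar\omega),1\rangle = \int_X \dbar\omega$. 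Similarly each summand $\iota_{j*}\PV(\Res_j(\omega))$ pairs with $1$ to yield $\int_{D_j}\Res_j(\omega)$, by definition of the pushforward of a current and the fact that $\Res_j(\omega)$ is locally integrable on $D_j$.

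Combining these evaluations gives
\[
0=\int_X\dbar\omega+(-1)^{n-1}2\pi\ii\sum_{j=1}^N\int_{D_j}\Res_j(\omega),
\]
and rearranging together with $-(-1)^{n-1}=(-1)^{n}$ yields the claimed formula. There is essentially no obstacle to overcome at this stage: all analytic content (principal values, regularity of $\PV(\omega)$, and the residue boundary contribution) was concentrated in Proposition \ref{prop:ResF}, so the corollary is purely a global Stokes-type consequence of that local computation combined with compactness of $X$.
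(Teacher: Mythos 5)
Your proof is correct and is essentially the argument the paper intends: Corollary \ref{cor:2.5} is stated immediately after Proposition \ref{prop:ResF} with no separate proof, precisely because it follows by testing the current identity (\ref{eqn:ResCur}) against the constant function $1$, which is a valid global test object since $X$ is compact. Your bookkeeping of the sign conventions (no sign in $\langle\dbar T,\varphi\rangle=\langle T,\dbar\varphi\rangle$, matching the computation in the proof of Proposition \ref{prop:ResF}) and of the pushforward pairing $\langle\iota_{j*}\PV(\Res_j\omega),1\rangle=\int_{D_j}\Res_j(\omega)$ is accurate, and the final rearrangement $-(-1)^{n-1}=(-1)^n$ gives the stated formula.
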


\begin{rem}
When $n=1$, this is the classical residue theorem.
\end{rem}

For any local section $\omega$ of $\EE^{(p,q)}_{\log}$, $\PV(\omega)$ is a current of order $0$, i.e., on each open subset it belongs to the topological dual of the space of continuous forms with compact support.
On a compact manifold, the notion of ``almost everywhere'' is well-defined by means of a finite covering.
The following proposition is an easy consequence of Lebesgue's dominance convergence theorem.
\begin{prop}\label{prop:3.5}
Let $\omega\in\Homo^0(X;\EE^{(n,n)}_{\rm log})$ and $\beta_n$ be continuous functions on $X$.
If there exists a positive constant $M>0$ such that $\sup_{x\in X}|\beta_n(x)|<M$ for any $n\in \Z_{\geq 0}$, and $\beta(z)=\lim_{n\rightarrow\infty}\beta_n(z)$ exists almost everywhere on $X$, one has $\lim_{n\rightarrow\infty}\int_X\beta_n\omega=\int_X\beta\omega$.
\end{prop}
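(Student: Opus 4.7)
My plan is to reduce the claim to a local statement and then invoke Lebesgue's dominated convergence theorem. The only non-trivial point is that $\omega$, though singular along $D$, has locally integrable coefficients; once this is established the boundedness of $\beta_n$ delivers the result.

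First I would choose a finite open cover $\{V_\lambda\}$ of the compact manifold $X$ by coordinate charts $(z_1^\lambda, \dots, z_n^\lambda)$ adapted to $D$, so that on each $V_\lambda$ we have $V_\lambda \cap D = \{ z_1^\lambda \cdots z_{s_\lambda}^\lambda = 0\}$, and a subordinate partition of unity $\{\rho_\lambda\}$. Since integration is linear, it suffices to show
\[
\lim_{n\to\infty}\int_{V_\lambda}\rho_\lambda\,\beta_n\,\omega = \int_{V_\lambda}\rho_\lambda\,\beta\,\omega
\]
for each $\lambda$, so I may work in a single chart.

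Next I would make the local integrability of $\omega$ explicit. On $V_\lambda$, a section of $\mathcal{E}^{(n,n)}_{\log} = \Omega^n_{\log}\otimes_{\mathcal{O}_{X^{an}}}\mathcal{E}^{(0,n)}_X$ can be written uniquely as
\[
\omega = f(z)\,\frac{dz_1}{z_1}\wedge\cdots\wedge\frac{dz_{s_\lambda}}{z_{s_\lambda}}\wedge dz_{s_\lambda+1}\wedge\cdots\wedge dz_n\wedge d\bar{z}_1\wedge\cdots\wedge d\bar{z}_n,
\]
with $f$ smooth, hence bounded on the relatively compact $V_\lambda$. Passing to polar coordinates $z_i = r_i e^{\ii\theta_i}$ in the singular variables, the coefficient of the Lebesgue measure is a bounded function times $\prod_{i=1}^{s_\lambda}\frac{1}{r_i}\cdot \prod_{i=1}^{s_\lambda} r_i = \prod_{i=1}^{s_\lambda} 1$, which is integrable on the coordinate polydisk. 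Thus $\omega \in L^1(V_\lambda)$ in the obvious sense that $\int_{V_\lambda}|\omega| < \infty$, where $|\omega|$ denotes the absolute value of its coefficient against the standard orientation.

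With this in hand, on each $V_\lambda$ the integrands $\rho_\lambda\beta_n\omega$ are bounded above in absolute value by the $L^1$ function $M|\rho_\lambda|\cdot|\omega|$, and converge pointwise almost everywhere to $\rho_\lambda\beta\omega$ (since $\beta_n\to\beta$ a.e.\ and $\rho_\lambda,\omega$ are fixed). Lebesgue's dominated convergence theorem now directly yields the conclusion; summing over $\lambda$ completes the proof. The only subtle ingredient is the local $L^1$ bound on logarithmic $(n,n)$-forms, and I expect that to be the one step worth writing out carefully.
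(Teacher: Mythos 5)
Your proof is correct and is exactly the argument the paper intends: it merely states that the proposition is ``an easy consequence of Lebesgue's dominance convergence theorem,'' relying implicitly on the observation (made earlier in \S\ref{sec:33}) that sections of $\EE_{\rm log}^{p,q}$ have locally integrable coefficients. Your write-up simply makes explicit the partition-of-unity reduction and the polar-coordinate computation showing $\int_X|\omega|<\infty$, which is what the author elides.
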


\noindent
In the following, all the multi-indices $P_m=(\mu(1),\dots,\mu(m))$ are aligned so that $\mu(1)<\cdots<\mu(m)$.
As in \S\ref{sec:2.1}, we write $\EE^{(p,q)}_{+}$ (resp. $\EE^{(p,q)}_{-}$) for $\EE^{(p,q)}_{\log}\otimes E$ (resp. $\EE^{(p,q)}_{\log}\otimes E^\vee$) and write $\EE^k_\pm$ for $\oplus_{p+q=k}\EE^{(p,q)}_{\pm}$.
For a pair of multi-indices $P_m\subset P_{m+1}=(\mu(0),\dots,\mu(m))$, we set $\delta(P_m;P_{m+1})=(-1)^l$ if $P_{m+1}\setminus P_m=\{ \mu(l)\}$. 

\begin{lem}\label{lem:Solution}
Let $\omega\in\Homo^0\left( X;\Omega_+^n\right)$.
Assume that the condition $(!)_+$ holds.
For each multi-index $P_m$, one can find a neighborhood $V(P_m)$ of $D(P_m)$ and $\psi^{P_m}\in\Homo^0\left( V(P_m);\EE^{n-m}(\log(D\setminus D_{\mu(m)}))\right)$ such that 
\begin{enumerate}
\item $V(P_m)=X$ and $\psi^{P_m}=0$ if $D(P_m)=\varnothing$.
\item $D(P_m)\subset V(P_m)\subset \bigcap_{P_{m-1}\subset P_m}V(P_{m-1})$ if $D(P_m)\neq\varnothing$.
\item \begin{equation}\label{eqn:2.5}(-1)^{m-1}\nabla\psi^{P_m}=\sum_{P_{m-1}\subset P_m}\delta(P_{m-1};P_m)\psi^{P_{m-1}}\text{ on $V(P_m)$ if $D(P_m)\neq\varnothing$.}\end{equation}
\item $\psi^{\varnothing}=\omega$ and $\delta(\varnothing;P_1)=1$.
\item For any $i=0,\dots,m-2$, $\rest_{\mu(m-i)}\circ\cdots\circ\rest_{\mu(m)}\psi^{P_m}$ exists and it belongs to \newline
$\Homo^0\left( D_{\mu(m-i),\dots,\mu(m)};\EE^{n-m}(\log(D\setminus D_{\mu(m-i-1),\dots,\mu(m)}))\right)$.
\item $\rest_{P_m}\psi^{P_m}:=\rest_{\mu(1)}\circ\cdots\circ\rest_{\mu(m)}\psi^{P_m}=\Res_{P_m}(\nabla)^{-1}\Res_{P_m}\omega$.
\end{enumerate}
 
\end{lem}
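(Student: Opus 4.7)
I would prove the lemma by induction on $m$, the cardinality of the multi-index $P_m$. For the base case $m=0$ I set $V(\varnothing)=X$ and $\psi^{\varnothing}=\omega$; all stated conditions are vacuous or immediate. For the inductive step, assuming the data have been constructed for all multi-indices of cardinality $\leq m-1$, I fix $P_m$ with $D(P_m)\neq\varnothing$. By induction hypothesis (2), the form
$$
\eta^{P_m}:=\sum_{P_{m-1}\subset P_m}\delta(P_{m-1};P_m)\,\psi^{P_{m-1}}
$$
is defined on the open neighborhood $W(P_m):=\bigcap_{P_{m-1}\subset P_m}V(P_{m-1})$ of $D(P_m)$, and I first verify $\nabla\eta^{P_m}=0$: expanding via equation (\ref{eqn:2.5}) at level $m-1$, the resulting double sum over chains $P_{m-2}\subset P_{m-1}\subset P_m$ vanishes by the simplicial identity $\sum_{P_{m-1}}\delta(P_{m-2};P_{m-1})\delta(P_{m-1};P_m)=0$ for the two intermediate faces between $P_{m-2}$ and $P_m$.

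Next I solve $(-1)^{m-1}\nabla\psi^{P_m}=\eta^{P_m}$ on a neighborhood $V(P_m)\subset W(P_m)$ of $D(P_m)$, requiring $\psi^{P_m}$ to be logarithmic along $D\setminus D_{\mu(m)}$ and smooth across $D_{\mu(m)}$. Taking $\Res_{\mu(m)}$ of both sides and using the commutation rule stated before Theorem \ref{thm:1}, the invertibility of $\Res_{\mu(m)}(\nabla)$ afforded by $(!)_+$ forces
$$
\rest_{\mu(m)}\psi^{P_m}=(-1)^{m-1}\Res_{\mu(m)}(\nabla)^{-1}\Res_{\mu(m)}(\eta^{P_m}).
$$
Only the summand with $P_{m-1}=P_m\setminus\{\mu(m)\}$ contributes to $\Res_{\mu(m)}(\eta^{P_m})$ (the other $\psi^{P_{m-1}}$ are smooth across $D_{\mu(m)}$), so applying the induction hypothesis (6) for this $P_{m-1}$ together with commutativity of residues along distinct components, the right-hand side reduces to $\Res_{P_m}(\nabla)^{-1}\Res_{P_m}(\omega)$, verifying (6); conditions (5) follow in the same way by successive application of the induction hypothesis.

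To build $\psi^{P_m}$ itself, I work in local coordinates $(z_1,\dots,z_n)$ near a point of $D(P_m)$ with $D_{\mu(i)}=\{z_i=0\}$. Extending the prescribed restriction to a smooth form $\tilde\psi$ logarithmic along $D\setminus D_{\mu(m)}$, the error $\eta^{P_m}-(-1)^{m-1}\nabla\tilde\psi$ has trivial $\Res_{\mu(m)}$ by construction. A relative Poincar\'e-type argument in the transverse direction, driven by the invertibility of $\Res_{\mu(m)}(\nabla)+k$ for $k\geq 0$ afforded by $(!)_+$, produces a correction $\chi$ with $\psi^{P_m}:=\tilde\psi+\chi$ solving the equation on a polydisc. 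A partition of unity subordinate to a finite cover of $D(P_m)$ patches these local solutions into a single form, and shrinking furnishes the desired neighborhood $V(P_m)\subset W(P_m)$.

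The main obstacle is this last step: invertibility of the relevant residue operators from $(!)_+$ supplies both the boundary value and the term-by-term solvability, but assembling a single global $\psi^{P_m}$ on a tubular neighborhood of $D(P_m)$ which simultaneously preserves the mixed regularity (smooth across $D_{\mu(m)}$, logarithmic along each $D_{\mu(i)}$ with $i<m$) and the prescribed restriction $\rest_{\mu(m)}\psi^{P_m}$ requires a careful partition-of-unity gluing.
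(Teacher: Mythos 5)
Your setup mirrors the paper's: induction on $m$, the form $\eta^{P_m}=\sum_{P_{m-1}\subset P_m}\delta(P_{m-1};P_m)\psi^{P_{m-1}}$, the verification that it is $\nabla$-closed via the simplicial sign identity, and the derivation of (5)--(6) by applying $\Res_{\mu(m)}$ and using the invertibility of $\Res_{\mu(m)}(\nabla)$ under $(!)_+$. Up to that point you reproduce the argument. The problem is the existence step.

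Your construction of $\psi^{P_m}$ --- extend the prescribed restriction to a local form $\tilde\psi$, correct by a transverse Poincar\'e-type argument on a polydisc, then patch local solutions via a partition of unity subordinate to a cover of $D(P_m)$ --- does not work as stated. If $\psi_i$ solves $(-1)^{m-1}\nabla\psi_i=\eta^{P_m}$ on $U_i$ and $\{\rho_i\}$ is a partition of unity, then
\[
(-1)^{m-1}\nabla\Bigl(\sum_i\rho_i\psi_i\Bigr)=\eta^{P_m}+(-1)^{m-1}\sum_i d\rho_i\wedge\psi_i,
\]
and the error term $\sum_i d\rho_i\wedge\psi_i$ is generically nonzero; patching solutions of a first-order $\nabla$-equation is not a pointwise operation. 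Making this rigorous would require iterating the correction, which is exactly a \v{C}ech-to-de Rham descent argument, and your proposal does not carry it out. You flag this as ``the main obstacle'' but you do not resolve it.

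The paper avoids local gluing altogether. It introduces the complex
\[
C^\nu:\ 0\to\EE_+^0(-D_{\mu(m)})\xrightarrow{\nabla}\cdots\to\EE_+^{\nu-1}(-D_{\mu(m)})\xrightarrow{\nabla}\EE_+^\nu(\log(D\setminus D_{\mu(m)}))\xrightarrow{\nabla}\EE_+^{\nu+1}\to\cdots
\]
with $\nu=n-m$, observes that under $(!)_+$ it is quasi-isomorphic to $\iota_!\mathcal{L}^-$, and uses that $\iota_!\mathcal{L}^-$ restricts to zero on $D(P_m)$. Therefore $\varinjlim_{D(P_m)\subset V}\mathbb{H}^{n-m+1}(V;C^{n-m})=0$, so $[\tilde\psi]$ dies on some tubular neighborhood $V(P_m)$; since $C^{n-m}$ is a complex of soft sheaves, that exactness of the hypercohomology class is realized by a global section $\psi^{P_m}$ with the required mixed regularity built in. This one sheaf-theoretic vanishing statement replaces your partition-of-unity step and is what you would need to supply to close the gap.
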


\begin{proof}
Assume that $\psi^{P_l}$ and $V(P_l)$ with desired properties are constructed for multi-indices $P_l$ of cardinality $|P_l|\leq m-1$. 
Let us pick a multi-index $P_m$ of cardinality $m$.
If $D(P_m)=\varnothing$, we set $V(P_m)=X$ and $\psi^{P_m}=0$.
Suppose $D(P_m)\neq\varnothing$.
We observe that $\tilde{\psi}:=\sum_{P_{m-1}\subset P_m}\delta(P_{m-1};P_m)\psi^{P_{m-1}}$ is $\nabla$-closed.
This is clear when $m=1$.
Let us suppose $m\geq 2$ and fix a multi-index $P_m=(\mu(1),\dots,\mu(m))$.
By induction, we obtain $\nabla\tilde{\psi}=\sum_{P_{m-2}\subset P_{m-1}\subset P_m}\delta(P_{m-2};P_{m-1})\delta(P_{m-1};P_m)\psi^{P_{m-2}}$.
For any $P_{m-2}\subset P_m$, we set $P_{m}\setminus P_{m-2}=\{ \mu(l_1),\mu(l_2)\}$ and $P^i_{m-1}:=P_m\setminus\{ \mu(l_i)\}$ ($i=1,2$).
It is easy to observe that $\delta(P_{m-2};P^1_{m-1})\delta(P^1_{m-1};P_m)=-\delta(P_{m-2};P^2_{m-1})\delta(P^2_{m-1};P_m)$.
This implies that $\tilde{\psi}$ is $\nabla$-closed.
Let us consider a complex 
\begin{equation}
C^\nu:0\rightarrow\EE_{+}^0(-D_j)\overset{\nabla}{\rightarrow} \EE_{+}^1(-D_j){\rightarrow}\cdots{\rightarrow}\EE_{+}^{\nu-1}(-D_j)\overset{\nabla}{\rightarrow}\EE^\nu_+(\log(D\setminus D_j))\overset{\nabla}{\rightarrow}\EE_{+}^{\nu+1}\overset{\nabla}{\rightarrow} \EE_{+}^{\nu+2}\rightarrow\cdots
\end{equation}
for $\nu=0,\dots,n$.
Under the assumption of Lemma \ref{lem:Solution}, $C^\nu$ is quasi-isomorphic to $\iota_!\mathcal{L}^-$ (\cite[II, 3.13,3.14]{Del} and \cite[2.10]{EV}).
If $j:D(P_m)\hookrightarrow X$ denotes the closed embedding, we have $j^{-1}\iota_!\mathcal{L}^-=0$.
Therefore, one has
\begin{equation}\label{eqn:TubeVan}
0=\Homo^k(D(P_m);\iota_!\mathcal{L}^-)=\varinjlim_{D(P_m)\subset V}\Homo^k(V;\iota_!\mathcal{L}^-)=\varinjlim_{D(P_m)\subset V}\mathbb{H}^k(V;C^\nu)
\end{equation}
(cf. \cite[Remark 2.6.9]{KS}).
Since $\tilde{\psi}$ is $\nabla$-closed, it defines an element $[\tilde{\psi}]$ of $\mathbb{H}^{n-m+1}(V;C^{n-m})$ where $V=\cap_{P_{m-1}\subset P_m}V(P_{m-1})$.
Therefore, on a suitably small open neighborhood $V(P_m)$ of $D(P_m)$, we have $[\tilde{\psi}|_{V(P_m)}]=0$ in $\mathbb{H}^{n-m+1}(V(P_m);C^{n-m})$.
Since $C^{n-m}$ is a soft complex, we can find an element $\psi^{P_m}\in\Homo^0\left( V(P_m);\EE^{n-m}(\log(D\setminus D_{\mu(m)}))\right)$ so that $(-1)^{m-1}\nabla\psi^{P_m}=\tilde{\psi}$. 

We apply $\Res_{\mu(m)}$ to the equality $(-1)^{m-1}\nabla\psi^{P_m}=\tilde{\psi}$ to obtain $(-1)^{m-1}\Res_{\mu(m)}(\nabla)\rest_{\mu(m)}(\psi^{P_m})=\delta(\{ \mu(1),\dots,\mu(m-1)\};P_m)\Res_{\mu(m)}(\psi^{\mu(1),\dots,\mu(m-1)})$.
By the induction hypothesis and the relation $\rest_i\circ\Res_{\mu(m)}=\Res_{\mu(m)}\circ\rest_i$, one can show that $\psi^{P_m}$ satisfies the desired property and $\rest_{P_m}\psi^{P_m}=\Res_{\mu(m)}(\nabla)^{-1}\Res_{\mu(m)}(\rest_{\mu(1),\dots,\mu(m-1)}\psi^{\mu(1),\dots,\mu(m-1)})=\Res_{P_m}(\nabla)^{-1}\Res_{P_m}\omega$.

\end{proof}

Now we mimic the construction of \cite{Matsu}.
Let $h_j$ be a smooth function on $X$ which takes the value $1$ on a small neighborhood of $D_j$ and $0\leq h_j\leq 1$.
We also assume that we can take a neighborhood $W_j$ of the support of $h_j$ so that $W(P_m):=\cap_{j\in P_m}W_j\subset V(P_m)$ if $D(P_m)\neq\varnothing$ and $W(P_m)=\varnothing$ if $D(P_m)=\varnothing$.
Therefore, $h_j$ is a pudding function on a neighborhood of $D_j$. We set $g^{\mu,\nu}:=\prod_{\lambda=\mu}^\nu(1-h_{\lambda})$ for $1\leq \mu\leq\nu\leq N$ and set $g^{\mu,\mu-1}:=1$.
Note that we have $1-\sum_{\lambda=\mu}^\nu g^{\mu,\lambda-1}h_{\lambda}=g^{\mu,\nu}$. For any multi-index $P_m=(\mu(1),\dots,\mu(m))$, we set 
\begin{align}
\eta^{\lambda}(P_m):=&g^{\mu(\lambda-1)+1,\mu(\lambda)-1}dh_{\mu(\lambda)}\\
H(P_m):=&\eta^1(P_m)\wedge\cdots\wedge\eta^{m-1}(P_m)g^{\mu(m-1)+1,\mu(m)-1}h_{\mu(m)}\\
G(P_m):=&\eta^1(P_m)\wedge\cdots\wedge\eta^{m}(P_m)
\end{align}
where we set $\mu(0):=0$. We set $\Psi^m:=\sum_{\substack{P_m\subset\{ 1,\dots,N\}\\ |P_m|=m}}H(P_m)\wedge\psi^{P_m}$.
The proof of the following lemma is analogous to that of \cite[Lemma 6.4]{Matsu} and therefore, is omitted.
\begin{lem}
Let $\{\psi^{P_m}\}_{P_m}$ be functions constructed in Lemma \ref{lem:Solution}. Then, one has the following relations:
\begin{enumerate}
\item $\nabla\Psi^1=(1-g^{1,N})\omega+\sum_{P_2}H(P_2)\wedge\nabla\psi^{P_2}+\sum_{P_1=(\mu(1))}G(P_1)g^{\mu(1)+1,N}\wedge\psi^{P_1}$
\item $(-1)^{m-1}\nabla\Psi^m=\sum_{P_m}H(P_m)\wedge\nabla\psi^{P_m}+\sum_{P_{m+1}}H(P_{m+1})\wedge\nabla\psi^{P_{m+1}}+\sum_{P_m}G(P_m)g^{\mu(m)+1,N}\wedge\psi^{P_m}$ ($2\leq m\leq n-1$)
\item $(-1)^{n-1}\nabla\Psi^n=\sum_{P_n}H(P_n)\wedge\nabla\psi^{P_n}+\sum_{P_n}G(P_n)g^{\mu(n)+1,N}\wedge\psi^{P_n}$.
\end{enumerate}
\noindent
In particular, 
\begin{equation}
\omega-\nabla(\sum_{m=1}^n\Psi^m)=g^{1,N}\omega+\sum_{m=1}^n\sum_{P_m}(-1)^mG(P_m)g^{\mu(m)+1,N}\wedge\psi^{P_m}
\end{equation}
vanishes on a neighborhood of $D$.
\end{lem}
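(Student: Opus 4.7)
My plan is to unravel each $\nabla(H(P_m)\wedge\psi^{P_m})$ via the graded Leibniz rule and reorganize the result using Lemma~\ref{lem:Solution} together with the telescoping identity $1-\sum_{\lambda=\mu}^{\nu}g^{\mu,\lambda-1}h_\lambda = g^{\mu,\nu}$. Since $H(P_m)$ has total degree $m-1$, Leibniz gives $\nabla(H(P_m)\wedge\psi^{P_m}) = dH(P_m)\wedge\psi^{P_m} + (-1)^{m-1}H(P_m)\wedge\nabla\psi^{P_m}$. Setting $\Lambda_m := \sum_{P_m}H(P_m)\wedge\nabla\psi^{P_m}$ and $B_m := \sum_{P_m}G(P_m)\,g^{\mu(m)+1,N}\wedge\psi^{P_m}$, the second summand directly contributes $(-1)^{m-1}\Lambda_m$. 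In the base case $m=1$, the relation $\nabla\psi^{\{\mu\}}=\omega$ (coming from $\psi^{\varnothing}=\omega$) combined with the telescoping identity yields $\Lambda_1 = \sum_{\mu=1}^{N}g^{1,\mu-1}h_\mu\,\omega = (1-g^{1,N})\omega$, matching the leading term of the first formula.

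Next I would expand $dH(P_m)$ using $dg^{\mu,\nu}=-\sum_{\kappa=\mu}^{\nu}g^{\mu,\kappa-1}\,dh_\kappa\,g^{\kappa+1,\nu}$. Differentiating any $\eta^\lambda(P_m)$ for $\lambda<m$, or the $g$-factor in the trailing scalar $\phi = g^{\mu(m-1)+1,\mu(m)-1}h_{\mu(m)}$, inserts a new index $\kappa$ strictly between consecutive members of $P_m$; the resulting form is precisely $H(P_{m+1})$ for $P_{m+1} = P_m\cup\{\kappa\}$ with $\kappa$ in the appropriate slot. Differentiating $h_{\mu(m)}$ inside $\phi$ produces the ``closure'' contribution $(-1)^{m-1}G(P_m)\wedge\psi^{P_m}$, which the telescoping identity splits as $G(P_m)=G(P_m)\,g^{\mu(m)+1,N}+\sum_{\kappa>\mu(m)}H(P_m\cup\{\kappa\})$ into the $B_m$-term plus further $H(P_{m+1})$-contributions with $\kappa$ inserted as the last index. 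A careful sign check then shows that the Leibniz signs $(-1)^{\lambda-1}$ from positions $\lambda=1,\dots,m+1$ exactly match $\delta(P_m;P_{m+1})$ under the $0$-indexed convention $P_{m+1}=(\mu(0),\ldots,\mu(m))$. Consequently the full $H(P_{m+1})$-contribution assembles into $-\sum_{P_{m+1}}H(P_{m+1})\wedge\sum_{P_m\subset P_{m+1}}\delta(P_m;P_{m+1})\psi^{P_m} = -(-1)^m\Lambda_{m+1} = (-1)^{m-1}\Lambda_{m+1}$ by Lemma~\ref{lem:Solution}. This yields $(-1)^{m-1}\nabla\Psi^m = \Lambda_m+\Lambda_{m+1}+B_m$ for $1\le m\le n-1$ and $(-1)^{n-1}\nabla\Psi^n = \Lambda_n+B_n$, using $\Lambda_{n+1}=0$ (since $D(P)=\varnothing$ whenever $|P|>n$ by simple normal crossing).

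The final identity follows by a telescoping sum: $\sum_{m=1}^{n}(-1)^{m-1}[\Lambda_m+\Lambda_{m+1}]=\Lambda_1=(1-g^{1,N})\omega$, whence $\sum_m\nabla\Psi^m = (1-g^{1,N})\omega + \sum_{m=1}^{n}(-1)^{m-1}B_m$, which rearranges to the stated equality. Vanishing on a neighborhood of $D$ is then pointwise: at $x\in\bigcap_{i=1}^{k}D_{j_i}$ one has $h_{j_i}\equiv 1$ nearby, so if some $j_i\notin P_m$ an appropriate $g$-factor inside $G(P_m)$ (when $j_i\le\mu(m)$) or inside $g^{\mu(m)+1,N}$ (when $j_i>\mu(m)$) vanishes, whereas if all $j_i$ lie among $\{\mu(\lambda)\}$ then $dh_{\mu(\lambda)}=0$ annihilates the factor $\eta^{\lambda}\subset G(P_m)$; the summand $g^{1,N}\omega$ vanishes similarly. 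The principal obstacle is the sign bookkeeping that identifies $(-1)^{\lambda-1}$ with $\delta(P_m;P_{m+1})$ across all insertion positions, including the reconciliation between the ``closure'' split and the ``insertion at the last slot'' — this combinatorial matching is the heart of the argument and where one must be meticulous.
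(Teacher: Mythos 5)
Your proof is correct and follows the argument the paper intends but omits (it cites Matsumoto's Lemma 6.4): apply the graded Leibniz rule to each $H(P_m)\wedge\psi^{P_m}$, use $dg^{\mu,\nu}=-\sum_\kappa g^{\mu,\kappa-1}\,dh_\kappa\,g^{\kappa+1,\nu}$ and the telescoping $1-\sum_\lambda g^{\mu,\lambda-1}h_\lambda=g^{\mu,\nu}$ to reorganize $dH(P_m)$ into $(-1)^{m-1}G(P_m)g^{\mu(m)+1,N}-\sum_{P_{m+1}\supset P_m}\delta(P_m;P_{m+1})H(P_{m+1})$, and then reassemble via Lemma~\ref{lem:Solution}(3) and the telescoping cancellation of the $\Lambda_m$'s. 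One small remark on the sign bookkeeping you flag as the crux: at the $m+1$ insertion positions $\lambda'=1,\dots,m$ the net coefficient $(-1)^{\lambda'}=-\delta(P_m;P_{m+1})$ comes from the wedge-Leibniz sign $(-1)^{\lambda'-1}$ times the $-1$ in $d(1-h_\kappa)$, while at the closure position $\lambda'=m+1$ it comes from the wedge-Leibniz sign $(-1)^{m-1}$ times a $+1$ (you differentiate $h_{\mu(m)}$, not a $(1-h)$-factor), the two being reconciled only because $(-1)^{m-1}=(-1)^{m+1}$; your phrasing ``the Leibniz signs $(-1)^{\lambda-1}$ exactly match $\delta$'' glosses over this, but since you correctly land on $-\sum\delta\,H(P_{m+1})\wedge\psi^{P_m}$, the argument is sound.
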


Let us prove Theorem \ref{thm:1}.
We apply Lemma \ref{lem:Solution} to $\omega=\omega_+$ and construct a family $\{\psi^{P_m}\}_{P_m}$.
We write $\lim_{h_j\rightarrow{\bf 1}_{D_j}}$ for the limit $W_j\rightarrow D_j$, i.e., it is a limit that the function $h_j$ converges to the characteristic function ${\bf 1}_{D_j}$ of $D_j$ satisfying $0\leq h_j\leq 1$. By definition, we have \mychange{an identity} $\langle\omega+,\omega_-\rangle_{ch}=\langle\omega_+-\nabla(\sum_{m=1}^n\Psi^m),\omega_-\rangle_{ch}$.
The left-hand side does not have the auxiliary function $h_j$.
Therefore, if the iterative limit $\lim_{h_1\rightarrow{\bf 1}_{D_1}}\circ\cdots\circ\lim_{h_N\rightarrow{\bf 1}_{D_N}}\langle\omega_+-\nabla(\sum_{m=1}^n\Psi^m),\omega_-\rangle_{ch}$ exists, it gives $\langle\omega_+,\omega_-\rangle_{ch}$.
We set $g^{P_m}:=g^{\mu(0)+1,\mu(1)-1}\cdots g^{\mu(m-1)+1,\mu(m)-1}$. First of all, we have 
\begin{align}
\langle\omega_+,\omega_-\rangle_{ch}&=\int_X\left(\omega_+-\nabla(\sum_{m=1}^n\Psi^m)\right)\wedge\omega_-\\
&=-\sum_{m=1}^n\int_X\dbar(\Psi^m\wedge\omega_-)\\
&=-\sum_{m=1}^n\sum_{P_m}\int_X\dbar(H(P_m)\wedge\psi^{P_m}\wedge\omega_-)
\end{align}
Hereafter, we fix a multi-index $P_m=(\mu(1),\dots,\mu(m))$ and compute the iterative limit $\lim_{h_1\rightarrow{\bf 1}_{D_1}}\circ\cdots\circ\lim_{h_N\rightarrow{\bf 1}_{D_N}}(-1)\int_X\dbar(H(P_m)\wedge\psi^{P_m}\wedge\omega_-)$. Since $H(P_m)\equiv 0$ if $D(P_m)=\varnothing$, we consider the case $D(P_m)\neq\varnothing$. By Corollary \ref{cor:2.5}, we have \mychange{an equality}

\begin{align}
&-\int_X\dbar(H(P_m)\wedge\psi^{P_m}\wedge\omega_-)\nonumber\\
=&(-1)^{n-1}2\pi\ii\sum_{j=1}^N\int_{D_j}\rest_j(g^{P_m}h_{\mu(m)})\Res_j\left( dh_{\mu(1)}\wedge\cdots\wedge dh_{\mu(m-1)}\wedge\psi^{P_m}\wedge\omega_-\right).
\end{align}
Observe that the last term does not contain the functions $h_{\mu(m)+1},\dots,h_{N}$. The iterative limit $\lim_{h_{\mu(m)+1}\rightarrow{\bf 1}_{D_{\mu(m)+1}}}\circ\cdots\circ\lim_{h_N\rightarrow{\bf 1}_{D_N}}$ does not change the last term.
After taking this limit, we consider the limit $\lim_{h_{\mu(m)}\rightarrow{\bf 1}_{D_{\mu(m)}}}$.
Since $\lim_{h_{\mu(m)}\rightarrow{\bf 1}_{D_{\mu(m)}}}\rest_{j}h_{\mu(m)}=0$ almost everywhere on $D_j$ if $j\neq\mu(m)$, the remaining term after this limit is precisely
\begin{align}
&2\pi\ii(-1)^{n-1}\int_{D_{\mu(m)}}\rest_{\mu(m)}(g^{P_m})\Res_{\mu(m)}\left( dh_{\mu(1)}\wedge\cdots\wedge dh_{\mu(m-1)}\wedge\psi^{P_m}\wedge\omega_-\right)\nonumber\\
=&
2\pi\ii\int_{D_{\mu(m)}}\rest_{\mu(m)}(g^{P_m}dh_{\mu(1)}\wedge\cdots\wedge dh_{\mu(m-1)}\wedge\psi^{P_m})\wedge\Res_{\mu(m)}\omega_-\label{eqn:3.21}
\end{align}
in view of Proposition \ref{prop:3.5}.
Here, we used the fact that $\psi^{P_m}$ is smooth along $D_{\mu(m)}$.
When $m=1$, $\rest_{\mu(1)}\psi^{P_1}=\Res_{\mu(1)}(\nabla)^{-1}\Res_{\mu(1)}\omega_+$ and this implies that $\rest_{\mu(1)}\psi^{P_1}\wedge\Res_{\mu(1)}\omega_-=0$ unless $n=1$ and $\int_{D_{\mu(1)}}\rest_{\mu(1)}\psi^{P_1}\wedge\Res_{\mu(1)}\omega_-=\langle \Res_{\mu(1)}\omega_+|\Res_{\mu(1)}(\nabla)^{-1}|\Res_{\mu(1)}\omega_-\rangle$ if $n=1$.
Let us assume that $m\geq 2$.
We set $\varphi:=\rest_{\mu(m)}(g^{P_m\setminus\{ \mu(m)\}}\psi^{P_m})\wedge\Res_{\mu(m)}\omega_-$.
Note that $\varphi$ is an $(n-1,n-m)$-form on $D_{\mu(m)}$.
Then, the iterative limit $\lim_{h_{\mu(m-1)+1}\rightarrow{\bf 1}_{D_{\mu(m-1)+1}}}\circ\cdots\circ\lim_{h_{\mu(m)-1}\rightarrow{\bf 1}_{D_{\mu(m)-1}}}$ does not change (\ref{eqn:3.21}).
Let us observe that \mychange{the identity}
\begin{align}
&2\pi\ii\int_{D_{\mu(m)}}dh_{\mu(1)}\wedge\cdots\wedge dh_{\mu(m-1)}\wedge\varphi\nonumber\\
=&2\pi\ii(-1)^{m-2}\int_{D_{\mu(m)}}\left\{\dbar\left(dh_{\mu(1)}\wedge\cdots\wedge dh_{\mu(m-2)}\wedge h_{\mu(m-1)}\varphi\right)-dh_{\mu(1)}\wedge\cdots\wedge dh_{\mu(m-2)}\wedge h_{\mu(m-1)}\dbar\varphi\right\}.
\end{align}
\mychange{is true.}
Since $\varphi$ does not contain the function $h_{\mu(m-1)}$, the second term vanishes after the limit $h_{\mu(m-1)}\rightarrow{\bf 1}_{\mu(m-1)}$. On the other hand, we have
\begin{align}
&\lim_{h_{\mu(m-1)}\rightarrow{\bf 1}_{D_{\mu(m-1)}}}2\pi\ii(-1)^{m-2}\int_{D_{\mu(m)}}\dbar\left(dh_{\mu(1)}\wedge\cdots\wedge dh_{\mu(m-2)}\wedge h_{\mu(m-1)}\varphi\right)\nonumber\\
\overset{Prop. \ref{prop:ResF}}{=}&\lim_{h_{\mu(m-1)}\rightarrow{\bf 1}_{D_{\mu(m-1)}}}(2\pi\ii)^2(-1)^{n-1+m-2}\sum_{j=1,j\neq\mu(m)}^{N}\nonumber\\
&\int_{D(j,\mu(m))}\rest_{j,\mu(m)}(h_{\mu(m-1)}dh_{\mu(1)}\wedge\cdots\wedge dh_{\mu(m-2)} )\wedge\Res_j\varphi\\
=&(2\pi\ii)^2(-1)^{n+m-1}\int_{D({\mu(m-1),\mu(m)})}\rest_{\mu(m-1),\mu(m)}(dh_{\mu(1)}\wedge\cdots\wedge dh_{\mu(m-2)} )\wedge\Res_{\mu(m-1)}\varphi\\
=&(2\pi\ii)^2(-1)^{m-1}\int_{D({\mu(m-1),\mu(m)})}\nonumber\\
&\rest_{\mu(m-1),\mu(m)}(g^{P_m\setminus\{\mu(m)\}}dh_{\mu(1)}\wedge\cdots\wedge dh_{\mu(m-2)} \wedge\psi^{P_m})\wedge\Res_{\mu(m-1)}\circ\Res_{\mu(m)}\omega_-.
\end{align}
Repeating the computation above, we obtain

\begin{align}
&-\int_X\dbar(H(P_m)\wedge\psi^{P_m}\wedge\omega_-)\nonumber\\
=&(-1)^{\frac{m(m-1)}{2}}(2\pi\ii)^m\int_{D(P_m)}\rest_{P_m}\psi^{P_m}\wedge\Res_{\mu(1)}\circ\cdots\circ\Res_{\mu(m)}\omega_-\\
=&
\begin{cases}
0& (m<n)\\
(2\pi\ii)^n\langle \Res_{P_n}(\omega_+)|\Res_{P_n}(\nabla)^{-1}|\Res_{P_n}(\omega_-)\rangle& (m=n).
\end{cases}
\end{align}
Note that $\Res_{P_m}\omega_\pm$ are both holomorphic $n-m$ forms on $D(P_m)$ and their wedge product vanishes unless $m=n$.

\section{Proof of Theorem \ref{thm:2}}\label{sec:3}

We use the notation of \S\ref{sec:2.2}.
Let us consider the family version of the cohomology intersection form following the construction of \cite{LS}.
Introducing new parameters $s_1,\dots,s_m$, we write $B$ for the polynomial ring $\C[s_1,\dots,s_m]$ and write $B_{\rm loc}$ for the localization of $B$ obtained by inverting the linear polynomials $l_i(s)+j$ for any $j\in\Z$.
We set \mychange{$s=(s_1,\dots,s_m)$ and} $\nabla_{\pm}=d\pm dF_s\wedge$.
%We consider the complex $(\Omega_{\log}^\bullet\otimes_{\C}S,\nabla_{\pm})$ for $S=B,\ B_{\rm loc}$.
For any integer $j$, we set $\Homo^n_\pm(jD)[s]_{\rm loc}:=\mathbb{H}^n\left(X;(\Omega_{\log}^\bullet(jD)\otimes_{\C}B_{\rm loc},\nabla_{\pm})\right)$.
%We define the cohomology intersection pairing $\langle\bullet,\bullet\rangle_{ch}:\Homo_-^p(-D)[s]\otimes_{\C[s]}\Homo^{2n-p}_+[s]\rightarrow\C[s]$ as a composition of the cup product $\Homo_-^p(-D)[s]\otimes_{\C[s]}\Homo^{2n-p}_+[s]\rightarrow\mathbb{H}^{2n}(X;(\Omega^\bullet_{\log}[s],d))$ and the trace morphism $\mathbb{H}^{2n}(X;(\Omega^\bullet_{\log}[s],d))\rightarrow\C[s]$.
%For any complex numbers $\alpha=(\alpha_1,\dots,\alpha_m)\in\C$, we set $\nabla_{\pm}(\alpha):=d\pm\sum_{i=1}^m\alpha_id\log f_i\wedge$, $\Homo^p_\pm(\alpha):=\mathbb{H}^p\left(X;(\Omega_{\log}^\bullet,\nabla_\pm(\alpha))\right)$ and $\Homo^p_\pm(-D)(\alpha):=\mathbb{H}^p\left(X;(\Omega_{\log}^\bullet(-D),\nabla_\pm)\right)$.
For any complex numbers $\alpha=(\alpha_1,\dots,\alpha_m)\in\C^m$ such that $l_i(\alpha)\notin\Z$, we have a specialization morphism ${\rm sp}(\alpha):\Homo^n_\pm(jD)[s]_{\rm loc}\rightarrow\Homo^n_\pm(jD)$ which fits into a commutative diagram
\begin{equation}
\xymatrix{
\Homo^n_{\pm}(jD)[s]_{\rm loc}\ar[r] \ar[d]^-{{\rm sp}(\alpha)} &\Homo^n_\pm((j+1)D)[s]_{\rm loc}\ar[d]^-{{\rm sp}(\alpha)}\\
\Homo^n_{\pm}(jD)\ar[r] &\Homo^n_\pm((j+1)D).
}
\end{equation}
As in the proof of \cite[2.10. Lemma]{EV}, one can show that the canonical morphism $(\Omega_{\log}^\bullet(jD)[s]_{\rm loc},\nabla_{\pm})\rightarrow (\Omega_{\log}^\bullet((j+1)D)[s]_{\rm loc},\nabla_{\pm})$ is a  quasi-isomorphism for any $j\in\Z$.
Therefore, we have an isomorphism $\Homo_\pm^n[s]_{\rm loc}\simeq\mathbb{H}^n(X;(\Omega_{\log}^\bullet(*D)\otimes_{\C}B_{\rm loc},\nabla_{\pm}))=\mathbb{H}^n(U;(\Omega_{U}^\bullet\otimes_{\C}B_{\rm loc},\nabla_{\pm}))$, from which we obtain a natural surjection $\Homo^0(U;\Omega^n_U)\otimes_{\C}B_{\rm loc}\rightarrow \Homo_\pm^n[s]_{\rm loc}$.
Obviously, the substitution $s_i=\tau \alpha_i$ induces a morphism $B_{\rm loc}\rightarrow A_{\rm loc}$ of $\C$-algebras.
The corresponding morphism of cohomology groups $\Homo_\pm^n[s]_{\rm loc}\rightarrow\Homo_\pm^n[\tau]_{\rm loc}$ is also induced.
As in \S\ref{sec:2.2}, we can define a bilinear pairing $\langle\bullet,\bullet\rangle_{ch}^s:\Homo^n_{-}[s]_{\rm loc}\otimes_{B_{\rm loc}}\Homo^{n}_{+}(-D)[s]_{\rm loc}\rightarrow B_{\rm loc}$ so thhat it fits in a commutative diagram
\begin{equation}
\xymatrix{
\Homo^n_{-}[s]_{\rm loc}\otimes_{B_{\rm loc}}\Homo^{n}_{+}(-D)[s]_{\rm loc}\ar[r]^-{\langle\bullet,\bullet\rangle_{ch}^s} \ar[d] &B_{\rm loc}\ar[d]\\
\Homo^n_{-}[\tau]_{\rm loc}\otimes_{A_{\rm loc}}\Homo^{n}_{+}(-D)[\tau]_{\rm loc}\ar[r]^-{\langle\bullet,\bullet\rangle_{ch}^\tau} \ar[d]_-{{\rm sp}(\tau_0)} &A_{\rm loc}\ar[d]^-{{\rm sp}(\tau_0)}\\
\Homo^n_{-}\otimes_{\C}\Homo^{n}_{+}(-D)\ar[r]^-{\langle\bullet,\bullet\rangle_{ch}} &\C
}.
\end{equation}
The construction above proves the following
\begin{prop}
For any $\omega_\pm\in\Homo^0(U;\Omega_U^n)$, the leading term of the Laurent expansion of $\langle\omega_-,\omega_+\rangle_{ch}^\tau$ is a rational function in $\alpha$.
\end{prop}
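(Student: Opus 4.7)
The plan is to exploit the commutative diagram displayed just before the proposition, together with the explicit ring-theoretic description of $B_{\rm loc}$ as a localisation of $\C[s_1,\dots,s_m]$ at the linear polynomials $l_i(s)+j$, $j\in\Z$.

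First, by the very construction of $\langle\bullet,\bullet\rangle_{ch}^s$, the value $\langle\omega_-,\omega_+\rangle_{ch}^s$ belongs to $B_{\rm loc}$; hence it is a rational function $R(s_1,\dots,s_m)$ whose denominator is a product of finitely many linear polynomials of the form $l_i(s)+j$. I would then substitute $s_i\mapsto\tau\alpha_i$, treating $\alpha_1,\dots,\alpha_m$ as indeterminates and $\tau$ as a further indeterminate; each factor $l_i(s)+j$ becomes $l_i(\alpha)\tau+j$. By the commutativity of the top square of the diagram, this substitution maps $\langle\omega_-,\omega_+\rangle_{ch}^s$ to $\langle\omega_-,\omega_+\rangle_{ch}^\tau$, which is therefore a rational function of $\tau$ whose denominator is a product of factors $l_i(\alpha)\tau+j$.

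Next I would observe that any such rational function admits a Laurent expansion at $\tau=\infty$ whose coefficients are rational functions in $\alpha$: each denominator factor can be inverted via the geometric expansion $(l_i(\alpha)\tau+j)^{-1}=(l_i(\alpha)\tau)^{-1}\sum_{k\geq 0}(-j/(l_i(\alpha)\tau))^k$, whose coefficients lie in $\C(\alpha)$, and these series multiply termwise. Combined with the expansion (\ref{eqn:exp}) supplied by Theorem \ref{thm:2}, which guarantees that the series begins at order $\tau^{-n}$, the leading coefficient $K^{(0)}(\omega_-,\omega_+)$ is itself a rational function in $\alpha$, which is precisely the content of the proposition.

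The only mildly delicate point is to verify that $\langle\omega_-,\omega_+\rangle_{ch}^s$ genuinely lies in $B_{\rm loc}$ (not merely in some flabbier ring of formal power series in $\tau^{-1}$) and that the specialisation $s_i=\tau\alpha_i$ is compatible with the two pairings in the manner asserted by the diagram. Both issues reduce to the naturality of the cup product and the trace morphism with respect to $\C$-algebra base change, which is already implicit in the construction given immediately before the proposition.
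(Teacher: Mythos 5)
Your argument is correct and is essentially the one the paper has in mind when it writes, right before the statement, ``The construction above proves the following.'' The substance is exactly: $\langle\omega_-,\omega_+\rangle_{ch}^s$ lives in $B_{\rm loc}$, so it is a polynomial in $s$ over a product of finitely many of the linear factors $l_i(s)+j$; the base change $s_i\mapsto\tau\alpha_i$ sends it into $A_{\rm loc}$, a ring of rational functions of $\tau$ whose denominators are finite products of $l_i(\alpha)\tau\pm j$; expanding these factors geometrically at $\tau=\infty$ yields a Laurent series each of whose coefficients lies in $\C(\alpha)$, hence so does the leading one. One small caution: you invoke Theorem \ref{thm:2} in the final sentence to read off the order $\tau^{-n}$ and to name the leading coefficient $K^{(0)}$, but this proposition is itself a step in the proof of Theorem \ref{thm:2}, so leaning on it would be circular. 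Fortunately your argument does not need it: the proposition only asserts that the leading Laurent coefficient, whatever its order, is rational in $\alpha$, and that is already established by the $B_{\rm loc}\to A_{\rm loc}$ base-change argument. I would simply delete the appeal to Theorem \ref{thm:2}.
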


\noindent
Each term of the sum (\ref{eqn:2.7}) can be rewritten in terms of an integral of an $(n,n-1)$-form over a small sphere around $p$, therefore it is a local analytic function of $\alpha$ ({\it principle of continuity,} \cite[pp651--657]{GH}).
Thus, (\ref{eqn:2.7}) is an analytic function of $\alpha$ and it is enough to prove Theorem \ref{thm:2} for $\alpha\in\Q^m$ with the condition (generic).
Moreover, we are reduced to the case when the Varchenko's conjecture is true as it is a generic condition.
Re-scaling $\tau$ if necessary, we may assume that $\alpha\in\Z^m$.
%\mychange{In the following, we assume $\alpha\in\Z^m$.}

%In order to prove Theorem \ref{thm:2}, we need to construct a family of a basis of a homology group.
Hereafter, we always equip $U$ with the analytic topology.
We simply write $U$ for $U^{an}$.
Let us follow the construction of \cite{LS}.
For the basics of the twisted homology theory, see e.g. \cite[Chapter V]{Bredon}. We only use Theorem 6.3 and Theorem 9.2 of [loc.\hspace{-.1em} cit.].
Let $\widetilde{\T}\rightarrow\T$ be the universal covering of $\T={\rm Specan}\;\C[t_1^{\pm1},\dots,t_m^{\pm1}]$ and let $\widetilde{U}$ be the cartesian product

\begin{equation}
\xymatrix{
\widetilde{U}\ar[r]^-{p} \ar[d]&U\ar[d]^-{f}\\
\widetilde{\T}\ar[r]&\T\ar@{}[lu]|{\square}.
}
\end{equation}
%The group of deck transformations ${\rm Deck}(\widetilde{\T}/\T)\simeq\Z^m$ naturally induces a deck transform with respect to $p:\widetilde{U}\rightarrow U$.
Using a coordinate \mychange{system} $(t_1,\dots,t_m)$ of $\T$, we identify the group ring with complex coefficients of the group of deck transformations ${\rm Deck}(\widetilde{U}/U)\simeq{\rm Deck}(\widetilde{\T}/\T)\simeq\Z^m$ with $\C[T^{\pm1}]:=\C[T_1^{\pm1},\dots,T_m^{\pm1}]$.
\mychange{Here, each $T_i$ corresponds to the counter-clockwise loop in the complex plane $\C={\rm Specan}\ \C[t_i]$.}
We set $\mathcal{L}^+:=p_!\underline{\C}_{\widetilde{U}}$.
The action of ${\rm Deck}(\widetilde{U}/U)$ to each fiber of $p$ naturally induces an action of $\C[T^{\pm1}]$ on $\mathcal{L}^+$.
% with respect to which we regard it as a local system of rank one $\C[T^{\pm1}]$-modules.
The dual sheaf of $\mathcal{L}^+$ as a $\C[T^{\pm1}]$-module is denoted by $\mathcal{L}^-$.
If we symbolically write $T_i=e^{2\pi\ii s_i}$, $\mathcal{L}^+$ amounts to the local system of monodromy of a multi-valued function $f_1^{s_1}\cdots f_m^{s_m}$ with generic parameters $s_i$.
%We take a divisor completion $X$ of $U$ and write $j:U\hookrightarrow X$ for the inclusion.
Any local monodromy of $\mathcal{L}^+$ along a component $D_i$ is given by a multiplication by a monomial $T^a$.
The condition $(*)$ guarantees that $T^a\neq 1$, i.e., $a$ is not a zero vector.
Therefore, we obtain a natural isomorphism $j_!\mathcal{L}^\pm\tilde{\rightarrow}\R j_*\mathcal{L}^\pm$.
In view of the Poincar\'e duality (\cite[Chapter V, Theorem 9.2]{Bredon}), this induces a purity of homology groups $\Homo_p(U;\mathcal{L}^\pm)=0$ ($p\neq n$) and an isomorphism $\Homo_n(U;\mathcal{L}^\pm)\tilde{\rightarrow}\Homo_n^{\rm lf}(U;\mathcal{L}^\pm)$.
We write ${\rm reg}:\Homo_n^{\rm lf}(U;\mathcal{L}^\pm)\tilde{\rightarrow}\Homo_n(U;\mathcal{L}^\pm)$ for the inverse of this natural isomorphism.
%Note that a locally finite homology group is also referred to as Borel-Moore homology group. 

For any element $\alpha=(\alpha_1,\dots,\alpha_m)\in\C^m$, we set $\C(\alpha):=\C[T^{\pm1}]/\langle T_1-e^{2\pi\ii\alpha_1},\dots,T_m-e^{2\pi\ii\alpha_m}\rangle$.
%By abuse of notation, we write $\C(\alpha)$ for the constant sheaf of $\C[T^\pm]$-modules on $U$ whose fiber is $\C(\alpha)$.
We set $\mathcal{L}^\pm(\alpha):=\C(\alpha)\otimes_{\C[T^{\pm1}]}\mathcal{L}^\pm$.
Note that $\mathcal{L}^\pm(\alpha)$ is nothing but the sheaf of flat section of the integrable connection $\nabla_\mp$.
Using the global section $1\in\C(\alpha)$, one can define a morphism of sheaves ${\rm sp}(\alpha):\mathcal{L}^\pm\rightarrow\mathcal{L}^\pm(\alpha)$ which fits into the following commutative diagram:
\begin{equation}
\xymatrix{
\Homo_n(U;\mathcal{L}^\pm)\ar[r] \ar[d]^-{{\rm sp}(\alpha)} &\Homo_n^{\rm lf}(U;\mathcal{L}^\pm)\ar[d]^-{{\rm sp}(\alpha)}\\
\Homo_n(U;\mathcal{L}^\pm(\alpha))\ar[r] &\Homo_n^{\rm lf}(U;\mathcal{L}^\pm(\alpha)).
}
\end{equation}
When we assume that the natural morphism $\Homo_n(U;\mathcal{L}^\pm(\alpha))\rightarrow\Homo_n^{\rm lf}(U;\mathcal{L}^\pm(\alpha))$ is an isomorphism, we write ${\rm reg}(\alpha)$ for the inverse of this morphism.
Then, we have a commutativity \mychange{relation} ${\rm sp}(\alpha)\circ{\rm reg}={\rm reg}(\alpha)\circ{\rm sp}(\alpha)$.
For any element $[\Gamma]$ of $\Homo_n(U;\mathcal{L}^\pm)$ or $\Homo_n^{\rm lf}(U;\mathcal{L}^\pm)$, we write $[\Gamma(\alpha)]$ for the element ${\rm sp}(\alpha)[\Gamma]$.

Let us start the proof of Theorem \ref{thm:2}.
\mychange{In the following, we fix $\alpha\in\Z^m$ generically so that Varchenko's conjecture is true.}
Replacing $X$ by a composition of \mychange{a sequence of blowing-ups} of $X$ if necessary, we may assume that the simple normal crossing divisor $D=X\setminus U$ is decomposed as $D=D_0\cup D^\prime\cup D_\infty$ and it satisfies
\begin{enumerate}
\item The function $e^{F_\alpha}:U\rightarrow\C$ naturally extends to a morphism $X\rightarrow\mathbb{P}^1$,
\item The support of the divisors of zeros (resp. poles) of $e^{F_\alpha}$ is $D_0$ (resp. $D_\infty$)  
\end{enumerate}
(\cite[p245]{Sil}, \cite[Chapter 4,\S2]{GH}).
As we retake $X$, the condition $(*)$ is in general violated.
However, for any index $i$ such that $D_i\subset D_0\cup D_\infty$, there exist an index $j$ so that ${\rm ord}_{D_i}f_j\neq 0$.
In particular, the local monodromy of $\mathcal{L}$ along a simple loop around $D_i\subset D_0\cup D_\infty$ is given by a multiplication by a monomial $T^a$ with $a\neq 0$.
Note that $D_0=\{\Re F_\alpha=-\infty\}$ , $D_\infty=\{\Re F_\alpha=+\infty\}$ and $D_0\cap D_\infty=\varnothing$.
In view of \cite[Lemma 5.2]{Sil}, the number of critical points of $|e^{F_\alpha}|^2$ (hence that of $\Re F_\alpha$) in $U=X\setminus D$ is equal to $|{\rm Crit}(F_\alpha)|$. Moreover, \mychange{[loc. cit.] also shows that }$\Re F_\alpha$ does not have a critical point on $D^\prime\setminus (D_0\cup D_\infty)$.
Let us take a small open neighborhood $T$ of $D$ in $X$ so that both $X\setminus T$ and the closure $\bar{T}$ are manifolds with boundary, the set ${\rm Crit}(F_\alpha)$ is contained in $X\setminus T$, and $d\Re F_\alpha\neq 0$ on $\partial T$.
Now we equip $U$ with a complete Riemannian metric $g$. By \cite[Theorem A]{Smale} (see also \cite[Theorem 2.27]{Nic}), we can take a $C^1$ vector field $V$ on $X\setminus T$ close to the gradient vector field ${\rm grad}_g\Re F_\alpha$ so that $V\cdot\Re F_\alpha>0$ outside the set ${\rm Crit}(F_\alpha)$ and any pair of stable and unstable manifolds of $V$ intersects transversally.
We extend $V$ to a $C^1$ vector field on $U$ so that $V\cdot\Re F_\alpha>0$ outside the set ${\rm Crit}(F_\alpha)$.
We write $\Phi_t$ ($-\infty<t<\infty$) for the 1-parameter subgroup generated by $V$. Let us take a point $p\in{\rm Crit}(F_\alpha)$ and fix a value of $\Im F_\alpha(p)$, namely we fix a branch of $F_\alpha$ near $p$.
We set $\Gamma^\pm_p:=\{ x\in U\mid \lim_{t\rightarrow\pm\infty}\Phi_t(x)=p\}$. Note that $\Gamma_p^+\cap\Gamma_q^-=\varnothing$ if $p\neq q$ by the transversality.
We take a complex Morse coordinate $(\zeta_j)=(\xi_j+\ii\eta_j)$ so that $F_\alpha(\zeta)=F_\alpha(p)+\sum_j\zeta_j^2$ and set $c_\ve^+:=\{ \sum_j\xi^2=\ve,\eta_1=\cdots=\eta_n=0\}$ and $c_\ve^-:=\{ \sum_j\eta^2=\ve,\xi_1=\cdots=\xi_n=0\}$ for small positive numbers $\ve>0$.
Since $V$ can be taken to be gradient-like in the sense of \cite[p45]{Nic}, we may assume that $\Gamma_p^\mp=\cup_{\pm t\geq 0}\Phi_t(c_\ve^\pm)\cup\cup_{0\leq\ve^\prime\leq\ve}c_\ve^\pm$.
Since $\Re F_\alpha$ is monotonically increasing along any trajectory of $V$, $\Gamma_p^\pm$ define elements of the locally finite homology group $[\Gamma^\pm_p]\in\Homo_n^{\rm lf}(U;\mathcal{L}^\pm)$, where we identify $\Gamma_p^\mp$ with a locally finite chain $\cup_{\pm t\geq 0}\Phi_t(c_\ve^\pm)+\cup_{0\leq\ve^\prime\leq\ve}c_\ve^\pm$.

Let us construct the regularizations of $[\Gamma^\pm_p]$. For this purpose, we need a 

\begin{lem}\label{lem:tubular}
There exists a small compact neighborhood $W_\infty$ (resp. $W_0$) of $D_\infty$ (resp. $D_0$) such that $\Homo_p^{\rm lf}(W_\infty\setminus D;\mathcal{L}^-)=0$ (resp. $\Homo_p^{\rm lf}(W_0\setminus D;\mathcal{L}^+)=0$) for any $p$.
\end{lem}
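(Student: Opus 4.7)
The plan is to reduce to a sheaf-cohomology vanishing statement via Poincar\'e duality and then to apply the Leray spectral sequence for a tubular retraction onto $D_\infty$. Since $W_\infty\setminus D$ is an oriented real $2n$-manifold, Poincar\'e duality gives
\[
\Homo^{\rm lf}_p(W_\infty\setminus D;\mathcal{L}^-)\cong\Homo^{2n-p}(W_\infty\setminus D;\mathcal{L}^-),
\]
so it suffices to prove $\Homo^q(W_\infty\setminus D;\mathcal{L}^-)=0$ for every $q$. The crucial input is that the local monodromy of $\mathcal{L}^-$ around each component $D_i\subset D_\infty$ is a nontrivial monomial $T^{-a_i}$ with $a_i\neq 0$, as guaranteed by the condition noted in the excerpt: for any $i$ with $D_i\subset D_0\cup D_\infty$, there is some $j$ with ${\rm ord}_{D_i}f_j\neq 0$.

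I would take $W_\infty$ to be a compact tubular neighborhood of $D_\infty$, sufficiently small that $W_\infty\cap D_0=\varnothing$, equipped with a smooth proper retraction $r\colon W_\infty\to D_\infty$ adapted to the SNC structure: at each $q\in D_\infty$ there exist local coordinates $(z_1,\dots,z_n)$ in which $\{z_i=0\}_{i\in K}$ are the $D_\infty$-components through $q$, $\{z_j=0\}_{j\in J}$ are the $D'$-components through $q$ (with $K\cap J=\varnothing$), and $r$ is the local projection sending the coordinates $(z_i)_{i\in K}$ to zero while fixing the other coordinates. Such a retraction is constructed by patching local polydisk retractions via a partition of unity subordinate to the SNC stratification.

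Writing $j\colon W_\infty\setminus D\hookrightarrow W_\infty$ for the open inclusion, the Leray spectral sequence
\[
E_2^{p,q}=\Homo^p\bigl(D_\infty;R^qr_*Rj_*\mathcal{L}^-\bigr)\Longrightarrow\Homo^{p+q}(W_\infty\setminus D;\mathcal{L}^-)
\]
together with proper base change (valid since $r$ is proper) identifies the stalk of $R^qr_*Rj_*\mathcal{L}^-$ at $q\in D_\infty$ with the fiber cohomology $\Homo^q(r^{-1}(q)\setminus D;\mathcal{L}^-)$. A case distinction then applies: if $q$ lies on some $D'$-component $D_{j_0}$, then by adaptedness $r^{-1}(q)\subset\{z_{j_0}=0\}\subset D$, so $r^{-1}(q)\setminus D=\varnothing$ and the stalk vanishes trivially; otherwise $r^{-1}(q)\setminus D\cong(\Delta^*)^{|K|}$, and by K\"unneth
\[
\Homo^*\bigl(r^{-1}(q)\setminus D;\mathcal{L}^-\bigr)\cong\bigotimes_{i\in K}\Homo^*(\Delta^*;L_i),
\]
where $L_i$ is the rank-one local system on $\Delta^*$ induced by the nontrivial monodromy $T^{-a_i}$ of $\mathcal{L}^-$ around $D_i$; each factor vanishes and hence so does the product.

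Consequently $R^qr_*Rj_*\mathcal{L}^-=0$ for every $q$, the spectral sequence collapses, and $\Homo^q(W_\infty\setminus D;\mathcal{L}^-)=0$ as required. The case of $W_0$ with coefficients in $\mathcal{L}^+$ follows by the same argument with $F_\alpha$ replaced by $-F_\alpha$, so that the monodromies of $\mathcal{L}^+$ around the components of $D_0$ are likewise nontrivial. \emph{The main technical obstacle is constructing the retraction $r$ compatible with the full SNC divisor $D$}, in particular ensuring that fibers over points of $D_\infty\cap D'$ are absorbed into $D'$-components; this is accomplished stratum by stratum using the standard tubular neighborhood theorem for SNC divisors.
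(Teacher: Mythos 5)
Your strategy is close to the paper's in spirit---Poincar\'e duality to pass to ordinary sheaf cohomology, then a Leray spectral sequence over $D_\infty$ killed by the nontrivial monodromy of $\mathcal{L}^-$ around $D_\infty$-components---but you execute it directly on $X$ with a tubular retraction $r\colon W_\infty\to D_\infty$, whereas the paper works on the real oriented blow-up $\varpi\colon\widetilde X\to X$.

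The genuine gap is in the base change step. Proper base change for $r$ yields
$\bigl(R^q r_*Rj_*\mathcal{L}^-\bigr)_q\cong\Homo^q\bigl(r^{-1}(q);(Rj_*\mathcal{L}^-)|_{r^{-1}(q)}\bigr)$,
\emph{not} $\Homo^q(r^{-1}(q)\setminus D;\mathcal{L}^-)$. Promoting the former to the latter is a base change across the open immersion $j$, which is not proper, and it is false in general: already for $r=\mathrm{id}_\Delta$ and $j\colon\Delta^*\hookrightarrow\Delta$ the fiber $r^{-1}(0)\setminus\{0\}$ is empty, yet $(R^qj_*\underline{\C})_0=\Homo^q(\Delta^*;\C)\neq 0$. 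Consequently, at a point $q\in D_\infty\cap D'$, where $r^{-1}(q)\setminus D=\varnothing$, you cannot conclude that ``the stalk vanishes trivially'': the complex $(Rj_*\mathcal{L}^-)|_{r^{-1}(q)}$ is nonzero on the disk $r^{-1}(q)$, since its stalks over $r^{-1}(q)\setminus\{q\}$ record the monodromy transverse to the $D'$-component through $q$. The vanishing you want is still true, but for the same reason as in your generic case, not because the fiber is empty: the stalk $\bigl(R^q(rj)_*\mathcal{L}^-\bigr)_q$ is the direct limit of $\Homo^q\bigl((rj)^{-1}(V);\mathcal{L}^-\bigr)$ over neighborhoods $V$ of $q$ in $D_\infty$, and $(rj)^{-1}(V)$ is a product of punctured disks and polydisks in which at least one punctured-disk factor comes from a $D_\infty$-component, hence carries nontrivial monodromy; K\"unneth then kills the whole tensor product. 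You should replace the appeal to proper base change by this direct computation of the stalk as a limit over shrinking preimages.

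This is precisely the issue the paper's real oriented blow-up is designed to circumvent. On $\widetilde X$, the preimage $\widetilde D=\varpi^{-1}(D)$ is an honest boundary of a manifold with corners, so the collar neighborhood is produced by an inward vector field tangent to the $\widetilde D'$-faces (a routine construction), sidestepping the delicate tubular-neighborhood problem you flag for the non-submanifold $D_\infty$. More importantly, $\tilde{\iota}_*\mathcal{L}^-$ is an honest rank-one local system on $\widetilde X$, not a complex, and the restriction $\varpi\colon\widetilde D_\infty\setminus\widetilde D'\to D_\infty\setminus D'$ is proper with torus fibers carrying nontrivial local systems---so proper base change applies cleanly, and the points of $D_\infty\cap D'$ never enter, having been removed by the deformation retraction onto $\widetilde D_\infty\setminus\widetilde D'$. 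Your approach can be repaired along the lines above, but the blow-up is what makes the paper's base change legitimate without extra argument.
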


\noindent
({\it Proof of Lemma \ref{lem:tubular}}) We only construct $W_\infty$ as the construction of $W_0$ is completely analogous.
Let $\widetilde{X}$ be the real oriented blowing-up of $X$ along $D$ and $\varpi:\widetilde{X}\rightarrow X$ be the associated projection (\cite[\S 8.2]{Sabbah}).
Note that $\widetilde{X}$ is naturally equipped with the structure of a manifold with corners in the sense of \cite{Joyce}.
For each component $D_i$ of $D$, \mychange{we set $\widetilde{D}_i:=\varpi^{-1}(D_i)\subset\widetilde{X}$.}
For any subdivisor $D_I=\cup_{i\in I}D_i$, we set $\widetilde{D}_I:=\cup_{i\in I}\widetilde{D}_i$.
Now, it is enough to construct a small compact neighborhood $\widetilde{W}_\infty$ of $\widetilde{D}_\infty$ so that $\Homo^p(\widetilde{W}_\infty\setminus \widetilde{D};\mathcal{L}^-)=0$.
Let $\tilde{\iota}:U\hookrightarrow\widetilde{X}$ be the natural inclusion. In view of the fact that $\widetilde{X}$ is a topological manifold with boundary such that $\partial \widetilde{X}=\widetilde{D}$, $\tilde{\iota}_*\mathcal{L}^-$ is a local system on $\widetilde{X}$.
Let $z=(z_1,\dots,z_n)$ be a coordinate neighborhood of a point of $D_\infty$ and let $((r_i,e^{\ii\theta_i})_{i=1}^b,z_{b+1},\dots,z_n)$ be the associated coordinate on $\widetilde{X}$ so that the morphism $\varpi$ is locally given by $((r_i,e^{\ii\theta_i})_{i=1}^b,z_{b+1},\dots,z_n)\mapsto ((r_ie^{\ii\theta_i})_{i=1}^b,z_{b+1},\dots,z_n)$. We may assume $\widetilde{D}_\infty=\{ r_1\cdots r_a=0\}$ and $\widetilde{D^\prime}=\{ r_{a+1}\cdots r_b=0\}$ in this neighborhood.
We can consider a local vector field $\sum_{i=1}^a\frac{\partial}{\partial r_i}$ whose stalk at each point $q$ belongs to the inward sector $IS(T_q\widetilde{X})$ in the sense of \cite[Definition 2.2]{Joyce}.
By means of a partition of unity, we can construct a vector field $\Theta$ on $\widetilde{X}$ which belongs to the inward sector $IS(T_q\widetilde{X})$ and is tangent to each component $D_i\subset D^\prime$.
Let $\Psi_t$ be the 1-parameter subgroup generated by $\Theta$.
For any $T>0$, $\bigcup_{0\leq t\leq T}\Psi_t(\widetilde{D}_\infty)$ (resp. $\bigcup_{0\leq t\leq T}\Psi_t(\widetilde{D}_\infty\setminus\widetilde{D}^\prime)$) is a deformation retract neighborhood of $\widetilde{D}_\infty$ (resp. $\widetilde{D}_\infty\setminus\widetilde{D}^\prime$).
We fix $T_0>0$ and set $\widetilde{W}_\infty:=\bigcup_{0\leq t\leq T_0}\Psi_t(\widetilde{D}_\infty)$.
Note that $\widetilde{W}_\infty\setminus \widetilde{D}$ is homotopic to $\widetilde{D}_\infty\setminus \widetilde{D}^\prime$.
Since the morphism $\varpi:\widetilde{D}_\infty\setminus \widetilde{D}^\prime\rightarrow D_\infty\setminus D^\prime$ is proper, we have an isomorphism $R^p\varpi_*(\tilde{\iota}_*\mathcal{L}^-)_x=\Homo^p(\varpi^{-1}(x);\tilde{\iota}_*\mathcal{L}^-)$ for any $x\in D_\infty\setminus D^\prime$.
In view of the fact that $\varpi^{-1}(x)$ is a product of circles and $\tilde{\iota}_*\mathcal{L}^-$ induces a non-trivial local system on it, we have the vanishing $R^p\varpi_*(\tilde{\iota}_*\mathcal{L}^-)_x$ for any $x\in D_\infty\setminus D^\prime$, hence $R^p\varpi_*(\tilde{\iota}_*\mathcal{L}^-)=0$ on $D_\infty\setminus D^\prime$ for any $p\in\Z$.
Leray's spectral sequence shows the vanishing $\Homo^p(\widetilde{W}_\infty\setminus \widetilde{D};\mathcal{L}^-)=\Homo^p(\widetilde{W}_\infty\setminus \widetilde{D};\tilde{\iota}_*\mathcal{L}^-)=\Homo^p(\widetilde{D}_\infty\setminus \widetilde{D}^\prime;\tilde{\iota}_*\mathcal{L}^-)=0$, hence the lemma is a consequence of Poincar\'e duality (\cite[Chapter V, theorem 9.2]{Bredon}).
\qed

\vspace{1em}

Since $e^{F_\alpha}$ is a proper map, $\{ \Re F_\alpha>M\}$ ($M>0$) forms a fundamental system of neighborhoods of $D_\infty$.
We take $M>\ve$ so that $\{ \Re F_\alpha>M\}\subset W_\infty:=\varpi(\widetilde{W}_\infty)$.
Now let us take $T>0$ so that $\Phi_{T}(c_\ve^+)\subset \{ \Re F_\alpha>M\}$.\footnote{The existence of such a number can easily be justified by a standard argument of a flow on a Riemannian manifold (e.g. \cite[Lemma 6.4.5]{Jost}).}
Since $\Homo_{n-1}(W_\infty\setminus D;\mathcal{L}^-)=0$, we can take a singular $n$-chain $C\in C_{n}(W_\infty\setminus D;\mathcal{L}^-)$ so that $\partial C=-\Phi_{T}(c_\ve^+)$.
We set $\widetilde{\Gamma}_p^-:=\cup_{0\leq\ve^\prime\leq\ve}c_{\ve^\prime}^++\cup_{0\leq t\leq T}\Phi_t(c_\ve^+)+C$.
By construction, we have $\partial\widetilde{\Gamma}_p^-=0$.
Moreover, $[\Gamma_p^--\widetilde{\Gamma}_p^-]=[\cup_{T\leq t}\Phi_t(c_\ve^+)-C]\in\Homo^{\rm lf}_n(W_\infty\setminus D;\mathcal{L}^+)=0$ implies that there is a locally finite chain $C^\prime$ in $W_\infty\setminus D$ so that $\Gamma_p^--\widetilde{\Gamma}_p^-=\partial C^\prime$.
Since $W_\infty\setminus D\hookrightarrow U$ is a proper map, $C^\prime$ can be regarded as a locally finite chain in $U$.
Therefore, we have ${\rm reg}[\Gamma_p^-]=[\widetilde{\Gamma}_p^-]$.
In the same manner, using a neighborhood $W_0$ of $D_0$ instead of $W_\infty$, we can construct $\widetilde{\Gamma}_p^+$ so that ${\rm reg}[\Gamma_p^+]=[\widetilde{\Gamma}_p^+]$.
Let us take $\tau>0$ to be generic so that $\Homo_p(U;\mathcal{L}^\pm(\tau\alpha))\overset{\sim}{\rightarrow}\Homo_p^{\rm lf}(U;\mathcal{L}^\pm(\tau\alpha))$ for any $p\in\Z$.
By the transversality of stable and unstable manifolds and the fact that $D_0\cap D_\infty=\varnothing$, we have $\langle[\widetilde{\Gamma}_p^-(\tau\alpha)],[\widetilde{\Gamma}_p^+(\tau\alpha)]\rangle_h=1$ and $\langle[\widetilde{\Gamma}_p^-(\tau\alpha)],[\widetilde{\Gamma}_q^+(\tau\alpha)]\rangle_h=0$ for $p\neq q$.
Therefore, $\{[\widetilde{\Gamma}_p^\pm(\tau\alpha)]\}_{p\in{\rm Crit}(F_\alpha)}$ is a set of independent elements of $\Homo_n(U;\mathcal{L}(\tau\alpha))$.
The cardinality of ${\rm Crit}(F_\alpha)$ is equal to the signed Euler characteristic $(-1)^n\chi(U)$, which is also equal to the dimension $\dim_{\C}\Homo_n(U;\mathcal{L}^\pm(\tau\alpha))$ in view of the pure codimensionality $\Homo_p(U;\mathcal{L}^\pm(\tau\alpha))=0$ ($p\neq n$).
We can conclude that $\{[\widetilde{\Gamma}_p^\pm(\tau\alpha)]\}_{p\in{\rm Crit}(F_\alpha)}$ is a basis of $\Homo_n(U;\mathcal{L}^\pm(\tau\alpha))$.

On $\cup_{0\leq t\leq T}\Phi_t(c_\ve^+)+C$, we have $\Re F_\alpha-\Re F_\alpha(p)\geq \ve$.
Moreover, we have an estimate
\begin{equation}
\left|\int_{\cup_{0\leq t\leq T}\Phi_t(c_\ve^+)+C}e^{-\tau F_\alpha}\omega_-\right|\leq A e^{-\frac{\ve}{2}\tau }
\end{equation}
for some constant $A>0$.
A standard argument of stationary phase method (e.g. \cite[Lemma 4.11]{AK}) gives rise to the asymptotic formulas 
\begin{equation}\label{eqn:asy1}
\int_{\tilde{\Gamma}_p^-(\tau\alpha)}e^{-\tau F_\alpha}\omega_-\sim e^{-\tau F_\alpha(p)} \frac{(2\pi)^{\frac{n}{2}}}{\sqrt{\det H_{F_\alpha,x}(p)}}\frac{\omega_-}{dx}\mychange{(p)}\tau^{-\frac{n}{2}}(1+o(\tau^{-1}))
\end{equation}
and 
\begin{equation}\label{eqn:asy2}
\int_{\tilde{\Gamma}_p^+(\tau\alpha)}e^{\tau F_\alpha}\omega_+\sim (\ii)^ne^{\tau F_\alpha(p)} \frac{(2\pi)^{\frac{n}{2}}}{\sqrt{\det H_{F_\alpha,x}(p)}}\frac{\omega_+}{dx}\mychange{(p)}\tau^{-\frac{n}{2}}(1+o(\tau^{-1}))
\end{equation}
as $\tau\rightarrow+\infty$.
Since the homology intersection matrix $(\langle [\tilde{\Gamma}^-_p(\tau\alpha)],[\tilde{\Gamma}^+_q(\tau\alpha)]\rangle_h)_{p,q\in{\rm Crit}(F_\alpha)}$ is an identity matrix, the twisted period relation (\ref{eqn:TPR1}) reads
\begin{equation}
\langle\omega_-,\omega_+\rangle_{ch}=\sum_{p\in{\rm Crit}(F_\alpha)}\left( \int_{\tilde{\Gamma}^-_p(\tau\alpha)}e^{-\tau F_\alpha}\omega_-\right)\left( \int_{\tilde{\Gamma}^+_p(\tau\alpha)}e^{\tau F_\alpha}\omega_+\right)
\end{equation}
for any generic $\tau$, hence we obtain
\begin{equation}\label{eqn:TPR}
\langle\omega_-,\omega_+\rangle_{ch}^\tau=\sum_{p\in{\rm Crit}(F_\alpha)}\left( \int_{\tilde{\Gamma}^-_p(\tau\alpha)}e^{-\tau F_\alpha}\omega_-\right)\left( \int_{\tilde{\Gamma}^+_p(\tau\alpha)}e^{\tau F_\alpha}\omega_+\right).
\end{equation}
Substituting (\ref{eqn:asy1}) and (\ref{eqn:asy2}) to (\ref{eqn:TPR}), we conclude that the leading term of (\ref{eqn:exp}) is given by (\ref{eqn:leading}).
\qed

\section*{Appendix: Cayley trick for dual volume}
In this appendix, we provide a combinatorial proof of the identity (\ref{eqn:CT}).  In order to fix the scale of integration variable, we fix the ambient lattice.
Let $M\simeq \Z^n$ be a free abelian group of rank $n$ and let $N=\Hom_{sg}(M,\Z)$ be its dual.
Here, the subscript $sg$ stands for semi-group homomorphisms.
Viewing the positive part of the real line $\R_{>0}$ as a (semi-)group by usual product of real numbers, we set $N_{>0}:=\Hom_{sg}(M,\R_{>0})$.
The isomorphism $\R\rightarrow\R_{>0}$ of semi-groups given by exponential naturally induces a diffeomorhism $\Exp:N_{\R}\rightarrow N_{>0}$. 
We set $M_{\R}:=M\otimes_\Z\R$ and $N_{\R}:=N\otimes_\Z\R$.
Let $p\in\R[M]$ be a polynomial with positive coefficients.
The symbol $x^\omega$ denotes the monomial in $\R[M]$ corresponding to an element $\omega\in M$.
Writing $p(x)=\sum_{\omega\in A}c_{\omega}x^{\omega}$ ($c_{\omega}>0$), we set $\Cone(A):=\sum_{\omega\in A}\R_{\geq 0}\omega\subset M_\R$, $\Delta^\prime_A:={\rm convex\ hull}\{ \omega\}_{\omega\in A}$ and $\Delta_A:={\rm convex\ hull}\{ \omega\}_{\omega\in A}\cup \{ O\}$.
We assume the following condition:
\begin{equation}
\text{$\Cone(A)$ is $n$-dimensional and pointed.}
\end{equation}
Note that we say $\Cone(A)$ is pointed if there is a dual vector $\phi\in N_\R$ such that $\langle\phi,\omega\rangle>0$ for any $\omega\in A$.
Let us recall the definition of the normalized dual volume of a cone.
Taking a set of free generators of $M$, one can define a volume form (or the canonical form) $\omega_0=\frac{dx}{x}$ of the the positive part $N_{>0}$.
In the following, we fix $\omega_0$ and equip $N_{>0}$ with an orientation $\omega_0>0$.
This choice amounts to fixing a $\Z$-linear isomorphism $\det:\bigwedge^n M\tilde{\rightarrow}\Z$ which naturally induces $\det:\bigwedge^n N\tilde{\rightarrow}\Z$.
We set $\dvol:=\Exp^*\omega_0$ and regard $\Exp$ as an orientation preserving diffeomorphism.
Consider a convex polyhedral pointed cone $C\subset M_\R$ and a vector $X\in M_\R$ which does not lie on any hyperplane spanned by a facet of $C$.
For any facet $F$ of $C$, we take a vector $\phi_F\in N_\R$ which is non-negative on $C$ and defines a facet $F$.
The dual cone $C^*_X$ is the convex polyhedral cone spanned by vectors $\{\langle\phi_F,X\rangle\phi_F\}_{F<C:facets}$ whose orientation is that of $N_\R$ multiplied by $-1$ to the power of  the number of facets of $C$ such that $\langle\phi_F,X\rangle<0$.
If $X$ is in the interior of the cone $C$, we simply write $C^*$ for $C^*_X$.
The dual normalized volume $\vol_\Z(C^*_X)$ is defined by the following equation (cf.\cite[\S 7, (7.174), (7.186)]{AHP}):
\begin{equation}
\vol(C^*_X):=\int_{C^*_X}e^{-\langle\phi,X\rangle}\dvol(\phi).
\end{equation}
Note that $\frac{\vol(C^*_X)}{n!}$ is the canonical function in the sense of \cite{AHP}.

Let us take $C=\Cone(A)$.
For any $\alpha>0$ and $X\in {\rm Int}(C)$, we set
\begin{equation}
I(\alpha):=\alpha^n\int_{N_{>0}}e^{-p(x)}x^{\alpha X}\omega_0.
\end{equation}
An argument closely related to the proof of \cite[\S 2, claim 1]{AHP} gives the following integral representation of the dual volume.
\begin{prop}\label{prop:ExponentialAmplitude}
Let ${\rm C.T.}(p)$ denote the constant term of $p$. One has a formula
\begin{equation}
\lim_{\alpha\rightarrow+0}I(\alpha)=e^{-{\rm C.T.}(p)}\vol(\Cone(A)^*_X).
\end{equation}
\end{prop}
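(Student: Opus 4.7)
The plan is to perform a tropical (logarithmic) change of variables that converts $I(\alpha)$ into a family of Laplace-type integrals whose integrands concentrate on the dual cone $\Cone(A)^*$ as $\alpha\to 0^+$, then apply Lebesgue's dominated convergence theorem.

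First I push the integral from $N_{>0}$ to $N_\R$ via the diffeomorphism $\Exp$ and then substitute $\phi=-\psi/\alpha$. Writing $p(\Exp(\phi))=\sum_{\omega\in A}c_\omega e^{\langle\phi,\omega\rangle}$, the Jacobian $\alpha^{-n}$ of the linear substitution exactly cancels the prefactor $\alpha^n$, yielding
$$I(\alpha)=\int_{N_\R}\exp\bigl(-P_\alpha(\psi)\bigr)\,e^{-\langle\psi,X\rangle}\,\dvol(\psi),\qquad P_\alpha(\psi):=\sum_{\omega\in A}c_\omega e^{-\langle\psi,\omega\rangle/\alpha}.$$
Next I identify the pointwise limit. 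On $\Int(\Cone(A)^*)$ every pairing $\langle\psi,\omega\rangle$ with nonzero $\omega\in A$ is strictly positive, so the corresponding terms of $P_\alpha(\psi)$ tend to $0$; the contribution of $\omega=0$ (when present) is exactly ${\rm C.T.}(p)$, giving the limiting integrand $e^{-{\rm C.T.}(p)}e^{-\langle\psi,X\rangle}$. On the complement $N_\R\setminus\Cone(A)^*$ some pairing is negative, hence $P_\alpha(\psi)\to+\infty$ and the integrand tends to $0$. The boundary $\bdry\Cone(A)^*$ is a set of measure zero.

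For the dominated convergence step (restricting to $0<\alpha\leq 1$), on $\Cone(A)^*$ I bound $e^{-P_\alpha}\leq 1$ and use that $e^{-\langle\psi,X\rangle}$ is integrable on the pointed cone $\Cone(A)^*$ because $X\in\Int(\Cone(A))$ makes $\langle\psi,X\rangle$ strictly positive away from $0$ on the dual cone. On the complement I exploit the inequality $e^{-\langle\psi,\omega\rangle/\alpha}\geq e^{-\langle\psi,\omega\rangle}$, valid whenever $\alpha\leq 1$ and $\langle\psi,\omega\rangle\leq 0$; after a finite polyhedral subdivision of $N_\R\setminus\Cone(A)^*$ chosen so that on each piece some fixed $\omega_*\in A$ satisfies $\langle\psi,\omega_*\rangle\leq -c\|\psi\|$ for a positive constant $c$, the doubly-exponential bound $\exp(-c_{\omega_*}e^{c\|\psi\|})$ crushes the factor $e^{-\langle\psi,X\rangle}$ and furnishes a uniform integrable envelope.

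Taking the limit under the integral sign then gives
$$\lim_{\alpha\to 0^+}I(\alpha)=e^{-{\rm C.T.}(p)}\int_{\Cone(A)^*}e^{-\langle\psi,X\rangle}\,\dvol(\psi)=e^{-{\rm C.T.}(p)}\vol\bigl(\Cone(A)^*_X\bigr),$$
the last identity being the definition, with the orientation convention simplifying because $X$ lies in the interior of $\Cone(A)$, so $\langle\phi_F,X\rangle>0$ for every facet $F$ of $\Cone(A)$ and $\Cone(A)^*_X=\Cone(A)^*$ carries the positive orientation. The main obstacle is constructing the dominating function on $N_\R\setminus\Cone(A)^*$: this requires the polyhedral subdivision described above, together with the observation that $\Cone(A)$ being pointed and full-dimensional guarantees that far from the origin some $\omega_*\in A$ always produces a linearly negative pairing strong enough to trigger the doubly-exponential suppression of $P_\alpha$.
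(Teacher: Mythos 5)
Your approach is genuinely different from the paper's. The paper chooses a complete simplicial fan $\Sigma$ refining the normal fan of $\Delta_A$, writes $N_{>0}=\bigcup_{\sigma\in\Sigma}\Exp(-\sigma)$, and analyzes each $I_\sigma(\alpha)$ separately with an elementary per-cone change of variables, showing that cones inside $\Cone(A)^*$ contribute their normalized volume while those outside contribute zero. You instead perform a single global rescaling $\phi=-\psi/\alpha$ on all of $N_\R$ and then appeal to dominated convergence. That is an attractive, more conceptual route, and your change of variables, pointwise-limit identification (including the $\omega=0$ term giving $e^{-{\rm C.T.}(p)}$), and final identification with $\vol(\Cone(A)^*_X)$ are all correct.

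However, there is a genuine gap in the dominated convergence step. You claim there is a \emph{finite} polyhedral subdivision of $N_\R\setminus\Cone(A)^*$ such that on each piece some fixed $\omega_*\in A$ satisfies $\langle\psi,\omega_*\rangle\le -c\|\psi\|$ with a uniform $c>0$. This is impossible: any finite collection of polyhedral pieces covering the complement must contain a piece whose closure meets $\partial\Cone(A)^*$. If $\psi_0\in\partial\Cone(A)^*$ with $\|\psi_0\|=1$ and $\psi$ is a nearby point of the complement, then $\langle\psi,\omega\rangle\ge -\|\psi-\psi_0\|\,\|\omega\|$ for every $\omega\in A$, so no pairing can be bounded by $-c\|\psi\|$ uniformly. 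Hence the doubly-exponential bound $\exp\bigl(-c_{\omega_*}e^{c\|\psi\|}\bigr)$ is not available near the angular boundary of the dual cone, and the envelope you propose fails to be integrable as stated.

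The fix is to split the complement into a near-boundary region and a far region. Set $h(\psi):=\max_{\omega\in A}\bigl(-\langle\psi,\omega\rangle\bigr)$, so $h\le 0$ on $\Cone(A)^*$ and $h>0$ off it. On $\{h(\psi)>\delta\|\psi\|\}$ (the far region) your doubly-exponential bound applies with the uniform constant $\delta$. On $\{0<h(\psi)\le\delta\|\psi\|\}$ (the near region) one instead uses the trivial bound $e^{-P_\alpha}\le 1$ together with $\langle\psi,X\rangle\ge c_1\|\psi\|$ for sufficiently small $\delta$, which holds by continuity and compactness because $X\in\Int(\Cone(A))$ forces $\langle\cdot,X\rangle$ to be bounded below by a positive constant on the unit sphere of $\Cone(A)^*$; this gives the exponentially decaying and integrable envelope $e^{-c_1\|\psi\|}$ there. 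With this modification your DCT argument goes through, and it yields an arguably cleaner proof than the paper's cone-by-cone computation.
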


\begin{proof}
It is enough to prove the proposition when ${\rm C.T.}(p)=0$.
Let $\Sigma$ be a complete simplicial fan in $N_{\R}$ which refines the normal fan of the polytope $\Delta_A$.
We consider a decomposition of the integration domain $N_{>0}=\bigcup_{\s\in\Sigma}\Exp(-\sigma)$.
For each cone $\s\in\Sigma$, we consider an integral
\begin{equation}
I_\s(\alpha):=\alpha^n\int_{\Exp(-\s)}e^{-p(x)}x^{\alpha X}\omega_0.
\end{equation}
We are only interested in $n$-dimensional cones $\s\in\Sigma$.
Let $\phi_1,\dots,\phi_n$ generate the cone $\s$.
We may assume that $\det(\phi_1\wedge\cdots\wedge\phi_n)>0$.
We consider a parametrization $(0,1)^n\ni\xi=(\xi_1,\dots,\xi_n)\mapsto \sum_{i=1}^n(\log \xi_i)\phi_i\in -\s$.
Setting $Z_i:=\langle\phi_i,X\rangle$, we have
\begin{equation}
I_\s(\alpha)=\det(\phi_1\wedge\cdots\wedge\phi_n)\alpha^n\int_{(0,1)^n}e^{-q(\xi)}\xi^{\alpha Z}\frac{d\xi}{\xi}
\end{equation}
for some positive Laurent polynomial $q(\xi)$.

Firstly, we consider the case when $\s\subset \Cone(A)^*$.
In this case, each $\phi_i$ belongs to the dual cone $\Cone(A)^*$ and we obtain an inequality $\min_{v\in\Delta_A^\prime}\langle\phi_i,v\rangle\geq 0$.
This implies that $q(\xi)$ is a non-constant polynomial.
Moreover, $\mychange{X}\in{\rm Int}(\Cone(A))$ shows that $Z_i>0$.
Let us decompose the integral $I_\s(\alpha)$ as
\begin{equation}
I_\s(\alpha)=\det(\phi_1\wedge\cdots\wedge\phi_n)\alpha^n\left\{\int_{(0,1)^n}\xi^{\alpha Z}\frac{d\xi}{\xi}-\int_{(0,1)^n}(1-e^{-q(\xi)})\xi^{\alpha Z}\frac{d\xi}{\xi}\right\}.
\end{equation}
Taking the limit $\alpha\rightarrow+0$, the first term clearly converges to $\frac{\det(\phi_1\wedge\cdots\wedge\phi_n)}{Z_1\cdots Z_n}=\vol_\Z(\s)$.
In view of the fact that $q(\xi)$ is a positive polynomial and that there is a function $f:(0,1)^n\rightarrow\R$ bounded and positive such that $1-e^{-q(\xi)}=q(\xi)f(\xi)$, we obtain an inequality
\begin{equation}\label{eqn:417}
(0\leq) \alpha^n\int_{(0,1)^n}(1-e^{-q(\xi)})\xi^{\alpha Z}\frac{d\xi}{\xi}
\leq C\alpha^n\int_{(0,1)^n}q(\xi)\xi^{\alpha Z}\frac{d\xi}{\xi}
\end{equation}
for some positive real number $C$.
Since $q(\xi)$ is a non-constant polynomial, the last term of (\ref{eqn:417}) is a linear combination of terms of the form $\alpha^n\int_{(0,1)^n}\xi^{a+\alpha Z}\frac{d\xi}{\xi}$ with non-negative exponents $a\neq 0$.
Clearly, each of these converges to $0$.
The computation above shows the equality $\lim_{\alpha\rightarrow+0}I_\s(\alpha)=\vol_\Z(\s)$.

It remains to prove
\begin{equation}\label{eqn:ZL}
\lim_{\alpha\rightarrow+0}I_\s(\alpha)=0
\end{equation}
when $\s\not\subset \Cone(A)^*$.
Let $1\leq r\leq n$ be a number so that $\phi_1,\dots,\phi_r\notin\Cone(A)^*$ and $\phi_{r+1},\dots,\phi_n\in\Cone(A)^*$.
We have relations $\min_{v\in\Delta_A^\prime}\langle\phi_i,v\rangle<0$ for $i=1,\dots,r$ and $\min_{v\in\Delta_A^\prime}\langle\phi_i,v\rangle=0$ and $Z_i>0$ for $i=r+1,\dots,n$.
Therefore, the Laurent polynomial $q$ can be written in a form $q(\xi)=c\xi^a(1+r(\xi))$ where $c$ is a positive real number, $a=(a_1,\dots,a_r,0,\dots,0)$ is an exponent vector such that $a_i<0$ and $r(\xi)$ is a positive polynomial.
In view of the inequality $e^{-q(\xi)}\leq e^{-c\xi^a}$ on $(0,1)^n$, (\ref{eqn:ZL}) follows from the next lemma.
\end{proof}

\begin{lem}
For any real vector $Z\in \R^n$, one has
\begin{equation}\label{eqn:ZL2}
\lim_{\alpha\rightarrow+0}\alpha^n\int_{(0,1)^n}e^{-\xi_1^{-1}\cdots\xi_n^{-1}}\xi^{\alpha Z}\frac{d\xi}{\xi}=0.
\end{equation}
\end{lem}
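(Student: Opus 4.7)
The plan is to observe that the exponential factor $e^{-\xi_1^{-1}\cdots\xi_n^{-1}}$ decays extremely fast as any $\xi_i\to 0$, and in particular it dominates any monomial singularity of $\xi^{\alpha Z-\mathbf{1}}$ at the boundary of the cube. More precisely, I will show that the integral $\int_{(0,1)^n}e^{-\xi_1^{-1}\cdots\xi_n^{-1}}\xi^{\alpha Z}\tfrac{d\xi}{\xi}$ is bounded uniformly in $\alpha$ as $\alpha\to 0^+$, so that after multiplication by $\alpha^n$ it tends to $0$.

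First, I will use the elementary inequality $e^{-t}\leq k!\,t^{-k}$ valid for all $t>0$ and all integers $k\geq 0$. Taking $t=\xi_1^{-1}\cdots\xi_n^{-1}$ gives
\begin{equation}
e^{-\xi_1^{-1}\cdots\xi_n^{-1}}\leq k!\,(\xi_1\cdots\xi_n)^k\qquad\text{on }(0,1)^n.
\end{equation}
Next, I will fix an integer $k$ larger than $\max_i |Z_i|+1$ and restrict to $0<\alpha<1/(\max_i|Z_i|+1)$, so that $k+\alpha Z_i-1>-1$ for every $i$. With this choice the above bound yields
\begin{equation}
\alpha^n\int_{(0,1)^n}e^{-\xi_1^{-1}\cdots\xi_n^{-1}}\xi^{\alpha Z}\frac{d\xi}{\xi}\leq k!\,\alpha^n\prod_{i=1}^n\int_0^1\xi_i^{k+\alpha Z_i-1}\,d\xi_i=k!\,\alpha^n\prod_{i=1}^n\frac{1}{k+\alpha Z_i},
\end{equation}
which visibly tends to $0$ as $\alpha\to 0^+$. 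Since the integrand is positive, the claim (\ref{eqn:ZL2}) follows.

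The only subtlety is that $Z\in\R^n$ is arbitrary and may have negative entries, so $\xi^{\alpha Z-\mathbf{1}}$ is genuinely singular at the faces of the cube; the main (and only) point to monitor is therefore that $k$ can be chosen independently of $\alpha$ for $\alpha$ sufficiently small, which is transparent from the inequality above. No delicate analytic obstacle arises because the Gaussian-type decay of $e^{-\xi^{-1}}$ at the origin beats any polynomial singularity with room to spare.
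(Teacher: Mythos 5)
Your proof is correct. The engine is the same as in the paper---the exponential decay of $e^{-\xi_1^{-1}\cdots\xi_n^{-1}}$ kills any algebraic singularity of $\xi^{\alpha Z-\mathbf 1}$ on the boundary of the cube---but the mechanism is different. The paper sets $g(\zeta):=\zeta^{-1}e^{-\zeta^{-1}}$, absorbs only the factor $(\xi_1\cdots\xi_n)^{-1}$ coming from $\tfrac{d\xi}{\xi}$ into the bounded function $g(\xi_1\cdots\xi_n)$, notes that $\xi^{\alpha Z}$ is Lebesgue-integrable with respect to $d\xi$ once $\alpha$ is small enough, and concludes by the dominated convergence theorem. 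You instead extract a large power $(\xi_1\cdots\xi_n)^k$ via the elementary inequality $e^{-t}\le k!\,t^{-k}$ and reduce the integral to an explicit product of one-dimensional beta-type integrals $\prod_i(k+\alpha Z_i)^{-1}$. The gain of your route is that it is entirely elementary and quantitative: there is no need to construct a dominating function that works uniformly for small $\alpha$, and the bound $k!\,\alpha^n\prod_i(k+\alpha Z_i)^{-1}$ makes the rate of vanishing visible. (Incidentally, your choice $k>\max_i|Z_i|+1$ is more than is needed---once $\alpha<1/(\max_i|Z_i|+1)$, any $k\ge 1$ already ensures $k+\alpha Z_i>0$; but this only makes your argument safer, not wrong.) Both proofs are fine; yours is arguably the more self-contained.
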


\begin{proof}
We set $g(\zeta):=\zeta^{-1}e^{-\zeta^{-1}}$ ($\zeta\in (0,1)$).
The integral in the limit (\ref{eqn:ZL2}) takes the form
\begin{equation}
\alpha^n\int_{(0,1)^n}g(\xi_1\cdots\xi_n)\xi^{\alpha Z}d\xi_1\cdots d\xi_n.
\end{equation}
When $\alpha$ is small enough, $\xi^{\alpha Z}$ is integrable.
Since $g$ is a bounded function, the lemma follows from Lebesgue's dominance convergence theorem. 
\end{proof}

Let us argue that Proposition \ref{prop:ExponentialAmplitude} can be seen as an extension of \cite[\S2, claim 1]{AH}.
For this purpose, let us assume that there is an element $\phi\in N$ such that $\langle\phi,\omega\rangle=1$ for any $\omega\in A$.
We can regard $\Delta^\prime_A$ as a full-dimensional convex polytope in the affine space $\phi^{-1}(1)\subset M_\R$.
Let us take a point $X$ from the interior of $\Delta^\prime_A$.
We also regard $X\in\Hom_{sg}(N,\Z)$.
The subspace $X^{-1}(1)\subset N_\R$ is naturally equipped with a positive volume element $\eta$ such that $\dvol=d\phi\wedge\eta$.
By abuse of notation, we also write $\vol_{\Z}$ for the volume measured by $(n-1)!\eta$.
We set $P_A^*:=C_X^*\cap X^{-1}(1)$.
This is compatible with the usual definition of the dual polytope: Identification $X^{-1}(1)\ni\psi\mapsto \psi-\phi\in\Ker X$.
$P_A^*=\{\psi\in\Ker X\mid \langle\psi,p-X\rangle\geq -1,\forall p\in P_A\}$.
By construction, we have an equality $\vol_\Z(C^*_X)=\vol_{\Z}(P^*_X)$.

%What about the case of $\phi^{-1}(v)$? Straightforward.
More explicitly, we can take a split $M=\Z\oplus\Ker \phi$ to write each exponent $\omega\in A$ and $X$ as $\omega=(1,\omega^\prime)$ and $X=(1,u)$ ($\omega^\prime,u\in\Ker\phi$).
When $X=(v,u)$ does not lie on the hyperplane $\phi^{-1}(1)$ but still belongs to ${\rm Int}\Cone(A)$, one puts $P:=vP_A$ and one has the equality $v\vol_\Z(\Cone(A)^*_X)=\vol_\Z((P-u)^\circ)$.

One can also confirm the equality $v\vol_\Z(\Cone(A)^*_X)=\vol_\Z((P-u)^\circ)$ through integration.
The following equation is easily obtained from the integral representation of gamma function $\Gamma(v)=\int_{\R_{>0}}e^{-y}y^v\frac{dy}{y}$ and the change of a variable $y\rightarrow yp(x)$:
\begin{equation}\label{eqn:CTI}
\int_{N_{>0}}p(x)^{-v}x^u\frac{dx}{x}=\frac{1}{\Gamma(v)}\int_{\R_{>0}\times N_{>0}}e^{-yp(x)}y^{v}x^u\frac{dy}{y}\frac{dx}{x}.
\end{equation}
Thus, we recover \cite[\S2, claim 1]{AH}:
\begin{equation}
\int_{N_{>0}}p(x)^{-v}x^u\frac{dx}{x}=\vol_\Z((P-u)^\circ).
\end{equation}

One can also treat a weighted Minkowski sum of Newton polytopes in the same manner.
Let $q_i=\sum_{\omega^{(i)}\in A_i}c_{\omega^{(i)}}x^{\omega^{(i)}}\in\R[M]$ ($i=1,\dots,e$) be positive Laurent polynomials.
If $\{ {\bf e}_i\}_{i=1}^e$ denotes the standard basis of $\Z^e\subset\R^e$, we set $A:=\cup_{i=1}^e\{({\bf e}_i,\omega^{(i)})\}_{\omega^{(i)}\in A_i}\subset\Z^e\oplus M=:\hat{M}$.
For simplicity, we assume that $A$ generates the lattice $\hat{M}$.
Repeated applications of formulas similar to (\ref{eqn:CTI}) yield the following theorem which proves the identity (\ref{eqn:CT}).
\begin{thm}\label{prop:A2}
Let $v_1,\dots,v_e$ be positive real numbers.
We set $P:=\sum_{i=1}^ev_i{\rm New}(q_i)\subset M_{\R}$.
For any $u\in M_\R$, $u$ lies in the interior of $P$ if and only if $X:=(v,u)\in\hat{M}_{\R}$ lies in the interior of the cone $\Cone(A)$.
In this case, one has the following equalities: 
\begin{align}
v_1\cdots v_e\vol_\Z(\Cone(A)_X^*)&=\lim_{\alpha\rightarrow+0}\alpha^{n+e}\int_{\R_{>0}^e\times N_{>0}}e^{-\sum_{i=1}^ey_iq_i(x)}y^{\alpha v}x^{\alpha u}\frac{dy}{y}\wedge\frac{dx}{x}\\
&=\lim_{\alpha\rightarrow+0}\alpha^{n}\int_{N_{>0}}\prod_{i=1}^eq_i(x)^{-\alpha v_i}x^{\alpha u}\frac{dx}{x}\\
&=\vol_\Z((P-u)^\circ)
\end{align}
Moreover, if $T$ is a triangulation of $\Cone(A)$ such that $X$ does not lie on any facet of any simplex $\s\in T$, one has
\begin{equation}\label{eqn:DE}
\vol_\Z((P-u)^\circ)=v_1\cdots v_e\sum_{\s\in T}\frac{1}{\prod_{i\in\s}p_{\s i}(X)}\frac{1}{|\det A_\s|}.
\end{equation}
\end{thm}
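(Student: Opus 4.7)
The plan is to organize the four expressions in Theorem \ref{prop:A2} as a chain linked by (i) Proposition \ref{prop:ExponentialAmplitude} applied to the Cayley configuration, (ii) a Gamma-integral reduction, (iii) an explicit support-function computation for the dual polytope volume, and (iv) a per-simplex computation of $\vol(\sigma^*_X)$ combined with a Lawrence--Brion subdivision of the dual cone to obtain (\ref{eqn:DE}).

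The equivalence $u\in\Int(P)\iff X=(v,u)\in\Int(\Cone(A))$ is immediate from the convex-combination characterization of the interior of a cone: a strict positive decomposition $X=\sum_{i,\omega^{(i)}}t_{i,\omega^{(i)}}(e_i,\omega^{(i)})$, after regrouping by Cayley block, corresponds to $u=\sum_i v_i\mu_i$ with $\mu_i\in\Int(\mathrm{New}(q_i))$.

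Applying Proposition \ref{prop:ExponentialAmplitude} to the Cayley configuration $A\subset\hat M$ and the positive Laurent polynomial $p(y,x)=\sum_{i=1}^e y_iq_i(x)$ (whose exponent set is $A$ and whose constant term vanishes) immediately evaluates the first integral in the limit $\alpha\to+0$. The equality between the two integrals follows from the Gamma identity $\int_0^\infty e^{-y_iq_i(x)}y_i^{\alpha v_i}\frac{dy_i}{y_i}=\Gamma(\alpha v_i)q_i(x)^{-\alpha v_i}$ applied iteratively, after noting $\alpha v_i\Gamma(\alpha v_i)=\Gamma(1+\alpha v_i)\to 1$ as $\alpha\to+0$. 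The identification with $\vol_\Z((P-u)^\circ)$ is a direct computation: parametrizing $\Cone(A)^*=\{(\lambda,\psi)\in\R^e\oplus N_\R\mid \lambda_i+\langle\psi,\omega^{(i)}\rangle\geq 0,\;\forall i,\forall\omega^{(i)}\in A_i\}$ and integrating out each $\lambda_i$ against the weight $e^{-v_i\lambda_i}$ yields, after the substitution $\eta=-\psi$,
\[
v_1\cdots v_e\,\vol(\Cone(A)_X^*)=\int_{N_\R}e^{-h_{P-u}(\eta)}\,d\eta.
\]
Since $u\in\Int(P)$, the polytope $P-u$ contains the origin in its interior, so its support function $h_{P-u}$ equals the Minkowski gauge $\mu_{(P-u)^\circ}$; the standard polar-coordinate identity $\int_{N_\R}e^{-\mu_K(\eta)}d\eta=n!\vol(K)=\vol_\Z(K)$, obtained by integrating $e^{-s}$ against $d(s^n\vol(K))$, then closes the loop.

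For the triangulation identity (\ref{eqn:DE}), I compute $\vol(\sigma^*_X)$ for each simplicial cone $\sigma\in T$ via the change of variables $\phi=\sum t_i\phi_i$, where $\phi_i$ is the basis dual to the generators of $\sigma$; this change has Jacobian $|\det A_\sigma|^{-1}$ and transforms $\langle\phi,X\rangle$ into $\sum t_ip_{\sigma i}(X)$, so for $X\in\Int(\sigma)$ (all $p_{\sigma i}(X)>0$) we obtain $\vol(\sigma^*_X)=|\det A_\sigma|^{-1}\prod_i p_{\sigma i}(X)^{-1}$. The main obstacle is that a generic $X\in\Int(\Cone(A))$ lies in the interior of only one simplex $\sigma_0\in T$, so for all other simplices some $p_{\sigma i}(X)$ are negative and the individual summands of (\ref{eqn:DE}) have no direct integral interpretation. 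To resolve this, I interpret the right-hand side of (\ref{eqn:DE}) as an identity of rational functions of $X$ and establish it by a Lawrence--Brion subdivision of the dual cone: across each facet shared by two adjacent simplices, the signed contributions cancel exactly, leaving the analytic function $\vol(\Cone(A)_X^*)$ on all of $\Int(\Cone(A))$; the hypothesis that $X$ does not lie on any facet of any simplex $\sigma\in T$ guarantees that every denominator in the sum is non-zero.
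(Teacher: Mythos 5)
Your proposal is correct and follows essentially the same route as the paper: reduce the first three equalities to Proposition~\ref{prop:ExponentialAmplitude} applied to the Cayley polynomial $p(y,x)=\sum_i y_i q_i(x)$ together with the Gamma-integral identity, then compute $\vol_\Z(\sigma^*_X)=|\det A_\sigma|^{-1}\prod_{i\in\sigma}p_{\sigma i}(X)^{-1}$ per simplex and sum using the signed-decomposition identity $\vol_\Z(\Cone(A)^*_X)=\sum_{\sigma\in T}\vol_\Z(\sigma^*_X)$. The cancellation-across-interior-facets mechanism you sketch under the name ``Lawrence--Brion subdivision'' is exactly Filliman duality, which the paper invokes by citing Filliman and Kuperberg; your direct support-function verification of $v_1\cdots v_e\,\vol(\Cone(A)^*_X)=\int_{N_\R}e^{-h_{P-u}}=\vol_\Z((P-u)^\circ)$ is a minor variation of the paper's discussion preceding the theorem but amounts to the same geometry.
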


\begin{proof}
We only need to prove the identity
\begin{equation}\label{eqn:DE2}
\vol_\Z(\Cone(A)_X^*)=\sum_{\s\in T}\frac{1}{\prod_{i\in\s}p_{\s i}(X)}\frac{1}{|\det A_\s|}.
\end{equation}
Regarding $p_{\s i}$ as an element of $\Hom_{\R}(\hat{M},\R)$, it is straightforward to see that the identity
\begin{equation}
\vol_\Z(\s^*_X)=\frac{1}{\prod_{\in\s}p_{\s i}(X)}\frac{1}{|\det A_\s|}
\end{equation}
is true.
Note that $\det (\bigwedge_{i\in\s} p_{\s i})=\pm \frac{1}{\det A_\s}$.
Thus, (\ref{eqn:DE2}) is a consequence of Filliman duality (\cite{F}, \cite{K})
\begin{equation}
\vol_\Z(\Cone(A)^*_X)=\sum_{\s\in T}\vol_\Z(\s^*_X).
\end{equation}
\end{proof}

\end{document}